\documentclass[11pt]{article}
\usepackage{amsmath, amssymb, amsthm}
\usepackage{enumitem}                                 % The preamble begins here.
\usepackage[margin=0.8in]{geometry}

\AtEndDocument{
\bigskip{\footnotesize%
\textsc{Department of Mathematics, The University of Haifa at Oranim, Tivon 36006, Israel} \par 
  \textit{E-mail address} \texttt{ amir.algom@math.haifa.ac.il}
} 

\bigskip{\footnotesize%
\textsc{Department of Mathematics, School of Science, Wuhan University of Technology, Wuhan, 430070, P. R. China; and Department of Mathematical Sciences,  P.O. Box 3000,  90014  University of Oulu,  Finland} \par 
  \textit{E-mail address} \texttt{  chrischang2016@gmail.com
} 
}

\bigskip{\footnotesize%
\textsc{Department of Mathematical Sciences, P.O. Box 3000, 90014 University of Oulu, Finland} \par 
  \textit{E-mail address} \texttt{  meng.wu@oulu.fi
} 
}

\bigskip{\footnotesize%
\textsc{Department of Mathematical Sciences, P.O. Box 3000, 90014 University of Oulu, Finland} \par 
  \textit{E-mail address} \texttt{  yuliang.wu@oulu.fi
} 
}

}

%\usepackage{cite}

%\usepackage{natbib}
%\bibliographystyle{plain}

% \tolerance=10000
%\tolerance=10000

\DeclareMathOperator{\supp}{supp}
\allowdisplaybreaks[1]

\newtheorem{theorem}{Theorem}[section]

\newtheorem{Counter-example}[theorem]{Counter example}

\newtheorem{Lemma}[theorem]{Lemma}
\newtheorem{Proposition}[theorem]{Proposition}

\newtheorem{Definition}[theorem]{Definition}
\newtheorem{Corollary}[theorem]{Corollary}

\newtheorem*{theorem*}{Theorem}

 % restrict a CG to a subset of objects

  % the letter used for the Gross skeleton

 % the char 0 field I will use

 % object set of a category
 % object set of a category
 % object set of a category
 % object set of a category

%\newcommand{\proof}{{\par\noindent {\bf Proof}\space\space}}
 % later this should, perhaps, be changed to =
 % source closure
 % target closure

\newcommand{\ignore}[1]{}

\usepackage{xcolor}
\usepackage[pagebackref]{hyperref}
\hypersetup{
   colorlinks,
    linkcolor={red!60!black},
    citecolor={blue!60!black},
    urlcolor={blue!90!black}
}

% already exists: % \newcommand{\ker}{{\rm ker}}

% \newcommand{\simexp}[2]{{ {\rm SimExp}\left({#1},{#2}\right)}}
% \newcommand{\simexp}[2]{{ {#1}_.^{{#2}_.}}}

% \newcommand{\rder}{{\bf R}}

      % cohomological complexity

\newcommand{\diam}{\text{diam}}

\allowdisplaybreaks[1]

\renewcommand\bigskip{\medskip}

\def\to{\rightarrow}

\def\R{\mathbb R}

\def\Z{\mathbb Z}

\DeclareMathOperator{\spt}{spt}

\DeclareMathOperator{\dist}{{\rm dist}}

\newtheorem{lemma}[theorem]{Lemma}

\theoremstyle{definition}

\newtheorem*{theorem-non}{Proposition}

%\theoremstyle{remark}

%% Commonly forgotten macros
%% \vee \wedge \xrightarrow

\title{Van der Corput and metric theorems for geometric progressions for self-similar measures}

\author{Amir Algom, Yuanyang Chang,  Meng Wu, and Yu-Liang Wu}
\date{}

\begin{document}

\maketitle
\begin{abstract}
We prove a van der Corput lemma for non-atomic self-similar measures $\mu$. As an application,  we show that the correlations of all finite orders  of  $( x^n \mod 1 )_{n\geq 1}$ converge to the Poissonian model for $\mu$-a.e. $x$, assuming $x>1$. We also complete a recent result of Algom, Rodriguez Hertz, and Wang (obtained simultaneously by Baker and Banaji), showing that any self-conformal measure with respect to a non-affine real analytic IFS   has polynomial Fourier   decay. 
\end{abstract}

\section{Introduction} \label{Section intro}
\subsection{A van der Corput lemma for fractal measures} \label{Section intro 1}
A fundamental result in harmonic analysis is  the van der Corput Lemma: 
\begin{theorem-non} \cite[Proposition 2 in Chapter 8]{Stein1993book} \label{vanderCorputLemma}
 \textit{  Let $g$ be a real-valued smooth function on an interval $J \subset\mathbb{R}$. Suppose  $|g^{(k)}(x)|\geq 1$ for all $x\in J$, where $k\geq 1$ is an integer. If $k=1$ and $g'$ is monotonic, or  if $k>1$, then 
\begin{equation}\label{vanderCorput}
    \left|\int_{J} e^{2\pi i \lambda g(x)}dx\right|\leq c_k \lambda^{-\frac{1}{k}}\; \, \text{ for }\;\, \lambda>0, \text{ where } c_k>0 \text{  is independent of } \lambda.
\end{equation}}
\end{theorem-non}
The purpose of this paper is to prove variants of this result when the underlying measure in \eqref{vanderCorput} is a  self-similar measure, while assuming as little as possible about the smoothness of the phase function $g$. We will then apply this to study the fine statistics of sequences of the form $( x^n \mod 1 )_{n\geq 1}$ where $x$ is drawn according to such a measure, and to study the Fourier decay problem for self-conformal measures.

Self-similar measures are defined as follows:  Let $\Phi= \lbrace f_1,...,f_n \rbrace$ be a finite set of real non-singular contracting similarity maps of a compact interval $J\subset \mathbb{R}$. That is, for every $i$ we can write $f_i(x)=r_i \cdot x+t_i$ where $r_i \in (-1,1)\setminus \lbrace 0 \rbrace$ and $t_i \in \mathbb{R}$, and $f_i(J)\subseteq J$. We will refer to $\Phi$ as a \textit{self-similar IFS} (Iterated Function System).  It is well known that there exists a unique compact set $\emptyset \neq K=K_\Phi\subseteq J$ such that
$$ K = \bigcup_{i=1} ^n f_i (K).$$
The set $K$ is called a \textit{self-similar set}, and the \textit{attractor} of the IFS $\Phi$.  We always assume that there exist $i\neq j$ such that the fixed point of $f_i$ is not equal to the fixed point of $f_j$. This ensures that $K$ is infinite. 

 Next, let $\textbf{p}=(p_1,...,p_n)$ be a strictly positive probability vector, that is, $p_i >0$ for all $i$ and $\sum_i p_i =1$. It is well known that there exists a unique Borel probability  measure $\mu$ such that
\begin{equation} \label{Eq self-sim}
\mu = \sum_{i=1} ^n p_i\cdot  f_i\mu,\quad \text{ where } f_i \mu \text{ is the push-forward of } \mu \text{ via } f_i.
\end{equation}
The measure $\mu$ is called a \textit{self-similar measure}, and is supported on $K$.  Our assumptions that $K$ is  infinite and  that $p_i>0$ for every $i$ are known to imply that $\mu$ is non-atomic. In particular, all self-similar measures in this paper are  non-atomic. It is a  standard fact that the Lebesgue measure on any bounded interval  is a self-similar measure, but there are many self-similar measures that are not absolutely continuous.

We can now state one of the main results of this paper:

\begin{theorem} \label{thm:van_der_Corput_Ck}
    Let $\mu$ be a non-atomic self-similar measure supported on an interval $J \subseteq \mathbb{R}$.  Let $g \in C^k(J)$ for some integer $k\geq 2$ be such that for some $c_0>0$, $\max_{2\leq j\leq k}|g^{(j)}(x)| \geq c_0$ for any $x \in J$. Then  there exist constants $\tau=\tau(\mu,k)$ and $C=C(g)>0$ such that
    \begin{equation} \label{eq:van_der_Corput}
        \left| \int_{J} e^{2 \pi i g(x) \lambda} d \mu(x) \right| \le C  \lambda ^{-\tau}\quad  \text{for}\;  \lambda >0.
    \end{equation}
\end{theorem}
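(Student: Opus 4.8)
The plan is to strip the dependence on $g$ in two reduction steps and then attack a clean convex base case.

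\emph{Reduction to a single uniform derivative bound.} By continuity of $g^{(2)},\dots,g^{(k)}$ and compactness of $J$ there are a finite open cover $U_1,\dots,U_N$ of $J$ and integers $j_1,\dots,j_N\in\{2,\dots,k\}$ with $|g^{(j_l)}|\ge c_0/2$ on $U_l$; let $\delta_{\mathrm{Leb}}>0$ be a Lebesgue number of this cover. Fixing $m_0$ with $(\max_i|r_i|)^{m_0}|J|<\delta_{\mathrm{Leb}}$ and using $\mu=\sum_{|I|=m_0}p_I f_I\mu$, every cylinder $f_I(J)$ lies in some $U_{l(I)}$; since $f_I$ is affine with slope $r_I$, the function $h_I:=g\circ f_I$ satisfies $|h_I^{(j_{l(I)})}|\ge (c_0/2)(\min_i|r_i|)^{km_0}$ on all of $J$, with $\|h_I\|_{C^k}\le\|g\|_{C^k}$, and $|\int_J e^{2\pi i\lambda g}\,d\mu|\le\max_I|\int_J e^{2\pi i\lambda h_I}\,d\mu|$. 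So it suffices to bound $|\int_J e^{2\pi i\lambda h}\,d\mu|$ when $|h^{(j)}|\ge\delta$ on all of $J$ for one fixed $j\in\{2,\dots,k\}$, with $C$ and $\tau$ depending only on $\delta$, $\|h\|_{C^k}$, $j$, $\mu$.

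\emph{Reduction to $j=2$.} I would induct downward on $j$. If $j\ge3$, then $|h^{(j)}|\ge\delta$ forces $h^{(j-1)}$ to be strictly monotone, so for a parameter $\varepsilon>0$ the set $\{|h^{(j-1)}|<\varepsilon\}$ is an interval of length $\le 2\varepsilon/\delta$, and $\{|h^{(j-1)}|<\varepsilon\}\cup\{|h^{(j-1)}|>\varepsilon/2\}$ is an open cover of $J$ with Lebesgue number $\gtrsim\varepsilon$. One self-similar iteration at level $m\asymp\log(1/\varepsilon)$ then splits $\mu$ into a part carried by cylinders inside $\{|h^{(j-1)}|>\varepsilon/2\}$, on which the inductive hypothesis for $j-1$ applies to $h\circ f_I$ (whose $(j-1)$st derivative is then $\gtrsim\varepsilon^{1+c}$ on all of $J$), plus a part of total mass $\le\mu(\text{interval of length }2\varepsilon/\delta)\le C(\varepsilon/\delta)^{\gamma}$ by the standard Frostman bound $\mu(B(x,r))\le Cr^{\gamma}$, valid with some $\gamma>0$ for every non-atomic self-similar measure. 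Optimising $\varepsilon$ in $\lambda$ passes the estimate from $j-1$ to $j$, losing a factor in the exponent but keeping it positive.

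\emph{The base case $j=2$.} Now $|h''|\ge\delta$ on $J$, so $h$ has at most one critical point; discarding its $\rho$-neighbourhood (of $\mu$-mass $\le C\rho^{\gamma}$, Frostman again, then optimising $\rho$) we may assume $h$ is a $C^2$-diffeomorphism of $J$ onto $h(J)$ with $|h'|$ bounded below and $|h''|\ge\delta$. This is the heart of the matter, and it genuinely uses the self-similar structure: decay of $\widehat{h_*\mu}$ is not implied by decay of $\widehat\mu$, which can fail outright (e.g.\ for Pisot Bernoulli convolutions). The mechanism I would use is that conjugating $\{f_i\}$ by $h$ yields the IFS $\widetilde\Phi=\{h\circ f_i\circ h^{-1}\}$, a \emph{non-affine} $C^2$ conformal IFS whose self-conformal measure is $h_*\mu$; the non-affineness is controlled from below by $\delta$, since $\log(h\circ f_i)'-\log f_i'=\log h'\circ f_i$ has derivative $(h''/h')\circ f_i\cdot r_i$, bounded away from zero where $h$ is monotone. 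One is thereby reduced to polynomial Fourier decay for self-conformal measures of non-affine $C^2$ IFS, which, iterating $\mu=\sum_{|I|=m}p_I f_I\mu$, amounts to showing that the oscillatory sums $x\mapsto\sum_{|I|=m}p_I\,e^{2\pi i\lambda\,h(f_I(x))}$ contract in $L^2(\mu)$ as $m\to\infty$, the per-scale gain coming from a uniform non-integrability (UNI/NIC) property of the branches of $\widetilde\Phi$ — exactly what the lower bound on $h''$ supplies — together with the Frostman regularity of $\mu$; taking $m\asymp\log\lambda$ converts the exponential decay in $m$ into the bound $\lambda^{-\tau}$. The main obstacle, where essentially all the work lies, is precisely this last step: running the Dolgopyat-type oscillatory-cancellation/renewal argument in the merely $C^2$ category and without any separation hypothesis on $\{f_i\}$, and verifying that $|h''|\ge\delta$ does force the required quantitative non-integrability of the conjugated branches; a secondary, purely bookkeeping, difficulty is tracking how $C$ and $\tau$ degrade through the two reductions so that $\tau=\tau(\mu,k)>0$ survives.
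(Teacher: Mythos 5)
There is a genuine gap, and it sits exactly where you place ``the heart of the matter'': the base case $|h''|\ge\delta$. Your proposed mechanism is to conjugate the self-similar IFS by $h$, obtain the $C^2$ conformal IFS $\widetilde\Phi=\{h\circ f_i\circ h^{-1}\}$ whose stationary measure is $h_*\mu$, and then run a Dolgopyat-type argument powered by a uniform non-integrability (UNI) condition. But $\widetilde\Phi$ is, by construction, $C^2$-conjugate to the affine IFS $\Phi$ --- it is the prototypical \emph{linearizable} system, and UNI fails for it. Your own computation shows this: the relevant distortion derivative is $(h''/h')\circ f_\omega\cdot r_\omega$, which is of order $r_\omega$ and hence tends to $0$ along the long words that drive the renewal/transfer-operator contraction. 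So the quantitative non-integrability you hope $|h''|\ge\delta$ supplies degenerates at exactly the scales where it is needed, and the Dolgopyat mechanism gives no per-scale gain. This is not a repairable bookkeeping issue: the linearizable case is precisely the one that the existing UNI-based results (Sahlsten--Stevens, Baker--Sahlsten) exclude by hypothesis, and it is the case this theorem is about.

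The paper's proof takes a different route that never invokes a spectral gap. After the same linearization over a cut-set $\mathcal{W}_b$ (reducing to $\sum_\omega p_\omega|\widehat\mu(\lambda g'(t_\omega)r_\omega)|$ plus an $O(\lambda b^{1+\alpha})$ error), it uses Tsujii's large-deviation theorem: for \emph{every} non-atomic self-similar measure --- including those, like Pisot Bernoulli convolutions, whose Fourier transform does not decay --- the set of unit frequency windows on which $|\widehat\mu|$ exceeds a small negative power is sub-exponentially sparse. The non-flatness of $g'$ (established for your hypothesis via an iterated divided-difference argument, Lemma \ref{non-flat-smooth}, which handles all $j\in\{2,\dots,k\}$ at once rather than by downward induction) together with the Feng--Lau Frostman bound then shows that only a small $\mathbb{P}$-mass of words $\omega$ can have $\lambda g'(t_\omega)r_\omega$ land in any single bad window. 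Summing over the few bad windows and the many good ones and optimizing $b=\lambda^{-\gamma}$ gives the polynomial bound. Your first compactness reduction and your use of the Frostman exponent to discard small exceptional intervals are sound and in the spirit of the paper, but without replacing the UNI step by an input of this kind (sparsity of large values of $\widehat\mu$ itself), the argument does not close.
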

In fact, we will prove a more general result, where  the phase $g$ is only required to be $C^{1+\gamma}(\mathbb{R})$ and  non-flat in an appropriate sense; see Theorem \ref{thm:van_der_Corput_C1+a} in Section \ref{Section main tech} for more details.

The exponent $\tau$ as in Theorem \ref{thm:van_der_Corput_Ck} depends on the geometry of $\mu$, and is uniform across all $g$ satisfying its assumptions.  Some special cases of Theorem \ref{thm:van_der_Corput_Ck} had been established in the past: Kaufman \cite{Kaufman1984ber}  proved, among other things,  that if $g$ is any $C^2$ diffeomorphism on $[0,1]$ such that $g''>0$, then \eqref{eq:van_der_Corput} holds for the Cantor-Lesebgue measure on the middle-thirds Cantor set. This was later extended to all  self-similar measures with a uniform contraction ratio by Mosquera-Shmerkin \cite{Shmerkin2018mos}. Since the work of Mosquera-Shmerkin it remained an open problem as to whether the result holds for \textit{all} non-atomic self-similar measures (with no further assumptions).  Theorem \ref{thm:van_der_Corput_Ck} settles this problem, while also relaxing  the assumptions on the phase function. Finally,  Algom, Rodriguez Hertz, and Wang \cite{algom2021decay, algom2020decay} recently proved logarithmic  decay for $C^{1+\gamma}(\mathbb{R})$ phases under certain Diophantine assumptions on the IFS. No such assumptions are required for our corresponding result, Theorem \ref{thm:van_der_Corput_C1+a} below. See Section \ref{Section main tech} for more details.

We also remark that, simultaneously and independently of our work, Baker and Banaji \cite{Baker2023Banaji} obtained similar results but with a different method, and a somewhat different range of applications. Baker and Banaji rely on a disintegration technique for self-similar measures as in \cite{Algom2022Baker}, and apply  this to study Fourier decay for certain smooth images of fractal measures on the so-called fibered IFSs. Their technique applies in higher dimensions and to infinite IFSs. In contrast, we rely on an application of Tsujii's large deviations estimate \cite{Tsujii2015self} (see Section \ref{Section Sketch}), and apply this to study the fine statistics of sequences of the form $\left( x^n \mod 1 \right)_{n\geq 1}$ (Section \ref{Section corr}). However,  both results can be applied to show that any non-atomic self-conformal measure with respect to a non-affine real analytic IFS has polynomial Fourier decay, a result first stated in the recent work of Algom, Rodriguez Hertz, and Wang \cite[Corollary 1.2]{algom2023polynomial}. See Section \ref{Section decay} for more details.

\subsection{Applications to higher order correlations of sequences modulo 1} \label{Section corr}
A sequence $(x_n)_{n\geq 1}$ taking values in $[0,1)$ is called \emph{uniformly distributed} or \emph{equidistributed} if for any sub-interval $J\subseteq [0, 1)$, 
\begin{equation*}
    \lim_{N\rightarrow \infty}\frac{1}{N}\sharp\left\{1\leq n\leq N:\, 
    x_n \in J\right\}= |J|, \text{ where } |J|:=\text{ the Lebesgue measure of } J.
\end{equation*}
A  real valued sequence $(x_n)_{n\geq 1}$ is called \emph{uniformly distributed modulo one} if the sequence of fractional parts  $(\lbrace x_n \rbrace )_{n\geq 1}\subset [0,1)$ is uniformly distributed in $[0,1)$. This notion  concerns the proportion of the fractional parts of the sequence in a fixed interval. The \textit{fine-scale} statistics of sequences modulo one, which describe the behavior of a sequence on the scale of mean gap $1/N$, has attracted growing attention in recent years. Two of the most important fine-scale statistics are the $k$-level  correlations and the distribution of level spacings (also called nearest-neighbor gaps), which are defined as follows.

For every integer $N\geq 1$ and $k\geq 2$ let $\mathcal{U}_k=\mathcal{U}_k(N)$  denote the set of distinct integral $k$-tuples taking values in $\{1,\ldots,N\}$. That is,
$$\mathcal{U}_k=\left\{\textbf{u}=(u_1,\ldots,u_k):\, u_i\in\{1,\ldots,N\},\, u_i\neq u_j\, \text{ for all }  i\neq j\right\}.$$ 
For each $\textbf{u}\in\mathcal{U}_k$ and real valued sequence $(x_n)_{n\geq 1}$ consider the difference vector
$$\Delta(\textbf{u}, (x_n)_{n\geq 1})=(x_{u_1}-x_{u_2},\ldots,x_{u_{k-1}}-x_{u_k})\in\mathbb{R}^{k-1}.$$ 
Given $f: \mathbb{R}^{k-1}\rightarrow\mathbb{R}$ with compact support, the \emph{$k$-level correlation function} is defined to be
\begin{equation}\label{correlation}
    R_k(f, (x_n)_{n\geq 1}, N):=\frac{1}{N}\sum_{\textbf{u}\in\mathcal{U}_k}\sum_{\textbf{l}\in\mathbb{Z}^{k-1}}f(N(\Delta(\textbf{u}, \{x_n\}_{n\geq 1})+\textbf{l})).
\end{equation} 
If 
\begin{equation}\label{poisson}
    \lim_{N\rightarrow\infty} R_k(f, (x_n)_{n\geq 1}, N)=\int_{\mathbb{R}^{k-1}}f(\textbf{x})  d\textbf{x},\; \forall f\in C^{\infty}_c(\mathbb{R}^{k-1})
\end{equation}
then we say that \emph{the $k$-level correlation of $(\{x_n\})_{n\geq 1}$ is Poissonian}. This notion alludes to the fact that such behaviour is consistent with the almost sure behaviour of a Poisson process with intensity one.

We now define the distribution of level spacings or nearest-neighbour gaps of the sequence $(\{x_n\})_{n\geq 1}$, that is, the gaps between consecutive elements of $(\{x_n\})_{n\geq 1}$. For each $N\geq 1$, we reorder the sequence $\{x_n\}_{n=1}^N$ and label them as 
\begin{equation*}
   0\leq\theta_{1, N}\leq\theta_{2, N}\leq\dots\leq\theta_{N, N}\leq 1,\quad \text{ and set } \theta_{0, N}:=\theta_{N, N}-1 \pmod 1.
\end{equation*}
Suppose that for every $s\geq 0$ the limit as $N\rightarrow \infty$ of the function
\begin{equation*}
    G(s, \{x_n\}_{n\geq 1}, N):=\frac{1}{N}\sharp\left\{1\leq n\leq N:\, N\left( \theta_{n, N}-\theta_{n-1, N} \right)\leq s\right\}
\end{equation*}
exists. Then the limit function $G(s)$ is called the asymptotic distribution function of the level spacings of $(\{x_n\})_{n\geq 1}$. We say that  the level spacings are \emph{Poissonian} if $G(s)=1-e^{-s}$, which agrees with  the distribution of the waiting time of a Poisson process with intensity one. We refer the reader to \cite{Yesha2023Baker} for more exposition on why and how these notions are used to capture pseudo-randomness properties of sequences.

Recently, Aistleitner  and Baker \cite{Aistleitner2021Baker} proved that $(\{\alpha^n \})_{n\geq 1}$ has Poissonian pair correlations for almost every $\alpha>1$. This refined a classical Theorem of Koksma that such sequences are  equidistributed (almost surely), since it is well known that a sequence with Poissonian pair correlations is always equidistributed \cite{Chris2018Lach, Larcher2017equi, Jens2020equi}. Aistleitner  and Baker \cite{Aistleitner2021Baker} further conjectured that typically such sequences should have Poissonian $k$-level correlations. Very recently, Aistleitner, Baker, Technau, and Yesha \cite{Yesha2023Baker} proved this conjecture, and thus obtained that the level spacings of $\{\alpha^n\}_{n\geq 1}$ are Poissonian for a.e.~$\alpha>1$; here we use the well-known fact that if the  $k$-level correlation of a sequence is Poissonian for all $k\geq 2$, then the level spacings are also Poissonian \cite[Appendix A]{Rudnick1999par}.

The following theorem extends these results to all self-similar measures:
\begin{theorem}\label{Main Theorem pos}
Let $\mu$ be a non-atomic self-similar measure on $\mathbb{R}$ such that $\mu$-a.e.~$x$ is larger than $1$.  Let $\xi \in \mathbb{R} \setminus \lbrace 0 \rbrace$. Then for $\mu$-almost every $x$ the $k$-level correlation of $(\{\xi x^n \})_{n\geq 1}$ is Poissonian for all $k\geq 2$. In particular, the level spacings are also Poissonian, and the sequence $(\{x^n \})_{n\geq 1}$ is  uniformly distributed.
\end{theorem}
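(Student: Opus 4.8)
The strategy is to reduce the combinatorial/statistical statement to a Fourier-analytic input, namely Theorem~1.1 (the van der Corput lemma for self-similar measures), via the general machinery developed by Aistleitner, Baker, Technau, and Yesha. First I would recall the metric criterion for Poissonian $k$-level correlations: to prove \eqref{poisson} for $\mu$-a.e.~$x$, it suffices (by standard approximation arguments, replacing $f\in C_c^\infty$ by finite trigonometric-polynomial approximants and truncating the $\mathbf{l}$-sum) to control, for each fixed nonzero frequency vector, averages of exponential sums of the form $\frac{1}{N}\sum_{\mathbf{u}\in\mathcal{U}_k} e\!\left(\langle \mathbf{m}, \Delta(\mathbf{u},(\xi x^n))\rangle\right)$, which after expanding $\Delta$ become $\frac1N\sum e\!\left(\sum_j a_j \xi x^{u_j}\right)$ for integer coefficients $a_j$, summed over distinct tuples. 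The key point is that such a sum, for a \emph{genuinely} $k$-level (non-degenerate) tuple, is a sum of exponentials $e(\xi\,(a_1 x^{u_1}+\cdots+a_k x^{u_k}))$ with at least two distinct exponents $u_j$ appearing with nonzero coefficient, so the phase $g(x)=a_1 x^{u_1}+\cdots+a_k x^{u_k}$ is a nonzero polynomial of degree $\geq 1$ that is genuinely "oscillating."

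The heart of the argument is then a second-moment (variance) estimate: one computes $\int \big|\frac1N\sum_{\mathbf{u}} e(\xi \Delta(\mathbf u,(x^n))\cdot\mathbf m)\big|^2\,d\mu(x)$, expands the square into a double sum over pairs of tuples $(\mathbf u,\mathbf u')$, and for each pair obtains an integral $\int e(\xi\, P_{\mathbf u,\mathbf u'}(x))\,d\mu(x)$ where $P_{\mathbf u,\mathbf u'}$ is a polynomial. Here is where Theorem~1.1 enters: whenever $P_{\mathbf u,\mathbf u'}$ is non-degenerate (which, by a counting argument, happens for all but a lower-order-in-$N$ fraction of pairs — this is exactly the "diagonal vs.\ off-diagonal" dichotomy familiar from the pair-correlation literature), the polynomial phase has some derivative of order between $2$ and its degree bounded below on $J$ (after noting $x>1$ on the support, so leading coefficients control a high derivative), giving decay $\lambda^{-\tau}$ with $\lambda$ comparable to the largest exponent involved, roughly $x^{u_{\max}}$. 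Summing these power-saving bounds over the off-diagonal pairs and bounding the diagonal trivially yields a variance bound that decays along a subsequence $N_j$ (e.g.\ $N_j = \lfloor e^{j^{\delta}}\rfloor$ or $j^2$), so by Borel–Cantelli the exponential sums go to zero $\mu$-a.e.\ along $N_j$; a routine monotonicity/interpolation argument between consecutive $N_j$ upgrades this to all $N$. Finally one removes the auxiliary constant $\xi$ (it only rescales the frequency $\lambda$, so Theorem~1.1 applies verbatim) and handles the restriction "$x>1$" by noting that on the support of $\mu$, outside a set of measure zero, $x$ is bounded away from $1$, so on each piece $f_{\mathbf i}(\mathrm{supp}\,\mu)$ one has uniform lower bounds on the relevant derivatives of $x\mapsto x^n$.

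The main obstacle I anticipate is \emph{bookkeeping the non-degeneracy of the polynomials $P_{\mathbf u,\mathbf u'}$ uniformly}, together with extracting a power saving that beats the number of surviving off-diagonal pairs. Concretely: the exponents $u_j$ can be as large as $N$, so $\lambda\approx x^{N}$ is enormous and the decay $\lambda^{-\tau}$ is extremely strong, but one must be careful that Theorem~1.1's constant $C=C(g)$ does not blow up faster than $\lambda^{\tau}$ as the polynomial's coefficients and degree grow with $N$ — this requires either a quantitative dependence of $C(g)$ on $g$ (which the more general Theorem~1.4/\ref{thm:van_der_Corput_C1+a} and its proof via Tsujii's large deviations estimate should provide, since the estimate is really about the contraction ratios of $\mu$ and a non-flatness scale of $g$) or a renormalization $x^{u_j} = x^{u_{\min}}\cdot x^{u_j-u_{\min}}$ to factor out common powers and reduce to controlled phases. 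A secondary, more technical, obstacle is the standard but delicate passage from $C_c^\infty$ test functions in \eqref{poisson} to finitely many exponentials with effective error control, and the truncation of the infinite sum over $\mathbf l\in\mathbb Z^{k-1}$; these are handled exactly as in \cite{Yesha2023Baker} and \cite{Aistleitner2021Baker}, so I would cite those reductions and focus the new work on feeding Theorem~1.1 into the variance computation.
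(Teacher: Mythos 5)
Your overall architecture matches the paper's: reduce to a second-moment bound via Poisson summation, handle the diagonal/degenerate tuples by counting, apply an oscillatory-integral estimate to the non-degenerate polynomial phases, and conclude a.e.\ convergence from the $L^2$ decay (the paper packages this last step as Proposition 7.1 of Technau--Yesha rather than a bare Borel--Cantelli-plus-interpolation, but that is equivalent). However, there is a genuine gap at the step you yourself flag as the main obstacle, and the resolutions you sketch do not close it. You cannot feed the phases $P_{\mathbf u,\mathbf u'}$ directly into Theorem \ref{thm:van_der_Corput_Ck}: the exponent there is $\tau=\tau(\mu,k)$ where $k$ is the order of the derivative that is bounded below, and via Lemma \ref{non-flat-smooth} it scales like $1/k$. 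For a sparse polynomial of degree $u_1\le N$, the only derivative you can bound below \emph{uniformly on $J$} in a trivial way is the $u_1$-th one, so $\tau\asymp 1/u_1$, and the resulting bound $\lambda^{-\tau}$ with $\lambda\asymp a^{u_1}$ is $a^{-O(1)}$ --- a constant, with no saving in $N$ at all. The enormous frequency is exactly cancelled by the degradation of $\tau$ with the degree, so summing over the $\asymp N^{2k(2+\epsilon)}$ off-diagonal terms gives nothing. The issue is not the constant $C(g)$ but the exponent itself.

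The paper's fix, which is the genuinely new content of Section 4 and is absent from your plan, is a covering lemma for the ``fewnomial'' class $\mathcal{F}_{k,N}$ (Lemmas \ref{smallvalue1} and \ref{smallvalue2}): because $g'$ and $g''$ have at most $2k$ monomials regardless of $N$, Descartes' rule and an iterated divided-difference argument show that $|g'(x)|,|g''(x)|\ge a^{u_1-N^{\epsilon}}$ for all $x$ outside at most $O(k)$ intervals of length $a^{-N^{\sigma}}$. One then re-runs the entire van der Corput machinery of Theorem \ref{thm:van_der_Corput_C1+a} with these quantitative, $N$-dependent inputs (this is Theorem \ref{main-2}), obtaining the super-polynomial decay $a^{-N^{\tau}}$ with $\tau$ independent of $g$, which is what is actually needed to beat the number of off-diagonal pairs. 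Your suggested renormalization $x^{u_j}=x^{u_{\min}}x^{u_j-u_{\min}}$ is in the right spirit --- it is essentially how the proof of Lemma \ref{smallvalue1} isolates a low-degree factor --- but as stated it is only a factorization and does not by itself produce the exceptional-set estimate or the uniform exponent. Separately, your treatment of the degenerate region should distinguish not only small-degree tuples but also the case $u_1=v_1$ with $l_1+m_1=0$ (the paper's $\Omega_2$), where the leading terms cancel and one must recurse on the next exponent.
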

Baker \cite{bakwer2021equi} recently proved that for some self-similar measures, $(\{x^n \})_{n\geq 1}$ is  uniformly distributed almost surely. For example, this was shown for appropriate translates of the Cantor-Lesebgue measure on the middle-thirds Cantor set. Baker further conjectured \cite[Conjecture 1.1]{bakwer2021equi} that this should hold for all self-similar measures supported on $[1,\infty)$.  Theorem \ref{Main Theorem pos} proves this  conjecture. Also, as the Lebesgue measure restricted on any interval is a self-similar measure, Theorem \ref{Main Theorem pos}  generalizes the results of \cite{Yesha2023Baker} discussed above; moreover, our methods also yield an alternative proof of the Lebesgue case treated in that paper.

Finally, we remark that Theorem \ref{Main Theorem pos} follows from certain non-trivial and uniform estimates on the $\mu$-means of a family of exponential functions that arise naturally when considering the  $k$-level  correlations of $(\lbrace x^n \rbrace)_{n\geq 1}$ for $\mu$-typical $x$. See Theorem \ref{main-2} for more details.

\subsection{Applications to Fourier decay of self-conformal measures with respect to real analytic IFSs} \label{Section decay}
An important special case of Theorem \ref{thm:van_der_Corput_Ck} is when the phase $g$ is real analytic and not affine. By virtue of Lemma \ref{non-flat-analytic} proved below, Theorem  \ref{thm:van_der_Corput_Ck} applies to such phase functions, and we thus obtain the following notable corollary:
\begin{Corollary} \label{corollary-analyic-decay}
    Let $\mu$ be a non-atomic self-similar measure supported on an interval $ J\subseteq \mathbb{R}$, and let $g \in C^\omega (J)$ be a non-affine real analytic function.   Then there exist $\tau>0$ and $C>0$ such that
    \begin{equation} \label{eq:van_der_Corput analytic}
        \left| \int_{J} e^{2 \pi i g(x) \lambda} d \mu(x) \right| \leq C \left| \lambda \right| ^{-\tau}, \text{as}\; \left| \lambda \right|\rightarrow \infty.
    \end{equation}   
\end{Corollary}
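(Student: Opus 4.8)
The plan is to reduce Corollary \ref{corollary-analyic-decay} to Theorem \ref{thm:van_der_Corput_Ck}: the only point to check is that a non-affine real analytic $g$ satisfies the hypothesis of that theorem, namely that there are an integer $k\ge 2$ and a constant $c_0>0$ with $\max_{2\le j\le k}|g^{(j)}(x)|\ge c_0$ for every $x\in J$ --- this is exactly the content of Lemma \ref{non-flat-analytic}. Since the left-hand side of \eqref{eq:van_der_Corput analytic} only involves the values of $g$ on $K=\supp\mu$, which is compact, I may and do assume without loss of generality that $J$ is a compact interval (replace it by $[\min K,\max K]$ if necessary); this changes nothing and makes compactness available.

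The heart of the matter is the passage from \emph{pointwise} non-flatness to a \emph{uniform} one. For a fixed $x_0\in J$ there must exist an integer $j=j(x_0)\ge 2$ with $g^{(j)}(x_0)\ne 0$: otherwise every derivative of the analytic function $g''$ would vanish at $x_0$, so $g''$ would vanish in a neighbourhood of $x_0$ and hence, by analyticity and connectedness of $J$, on all of $J$, forcing $g$ to be affine --- a contradiction. By continuity of $g^{(j(x_0))}$ there is then an open neighbourhood $U_{x_0}\ni x_0$ on which $|g^{(j(x_0))}|$ is bounded below by some $\delta_{x_0}>0$. The family $\{U_{x_0}\}_{x_0\in J}$ covers the compact interval $J$, so a finite subcover $U_{x_1},\dots,U_{x_m}$ suffices; setting $k:=\max_{1\le i\le m} j(x_i)$ and $c_0:=\min_{1\le i\le m}\delta_{x_i}>0$, every $x\in J$ lies in some $U_{x_i}$, whence $\max_{2\le j\le k}|g^{(j)}(x)|\ge|g^{(j(x_i))}(x)|\ge c_0$. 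This verifies the hypothesis of Theorem \ref{thm:van_der_Corput_Ck}, and I expect this compactness/covering step to be the only genuinely substantive point in the proof.

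It then remains to apply Theorem \ref{thm:van_der_Corput_Ck} with this $k$ and $c_0$, which yields $\tau=\tau(\mu,k)>0$ and $C=C(g)>0$ with $\bigl|\int_J e^{2\pi i g(x)\lambda}\,d\mu(x)\bigr|\le C\lambda^{-\tau}$ for all $\lambda>0$. For $\lambda<0$ I would either rerun the argument with the (still non-affine, still real analytic) phase $-g$, or simply take complex conjugates using $\overline{\int_J e^{2\pi i g(x)\lambda}\,d\mu(x)}=\int_J e^{2\pi i g(x)(-\lambda)}\,d\mu(x)$; either way the same bound holds with $|\lambda|$ in place of $\lambda$, and letting $|\lambda|\to\infty$ gives \eqref{eq:van_der_Corput analytic}. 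Apart from the covering argument, this is a direct invocation of Theorem \ref{thm:van_der_Corput_Ck} together with the trivial symmetry $\lambda\mapsto-\lambda$, so I anticipate no further obstacles.
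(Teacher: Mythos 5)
Your proposal is correct and follows essentially the same route as the paper: the paper deduces the corollary by combining Lemma \ref{non-flat-analytic} (which establishes exactly the uniform bound $\max_{2\le j\le k}|g^{(j)}(x)|\ge c_0$ you derive, via sequential compactness and contradiction rather than your finite-subcover argument) with Theorem \ref{thm:van_der_Corput_Ck}. Your explicit treatment of negative $\lambda$ by conjugation is a detail the paper leaves implicit, and is handled correctly.
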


Corollary \ref{corollary-analyic-decay} has important applications regarding the Fourier decay problem for self-conformal measures: Let $\Phi$ be a $C^\omega (J)$ IFS; This simply means that all the maps in $ \Phi$ are real analytic  strict contractions on the interval $J$. A great deal of attention has been given to the study of the rate at which the Fourier transform of  self-conformal measures with respect to such  IFSs (i.e., measures that satisfy \eqref{Eq self-sim} with respect to the IFS) decays. They arise naturally, e.g., as (finitely many) inverse branches of the Gauss map in number theory \cite{Sahl2016Jor} and as Furstenberg measures for some $\text{SL}(2,\mathbb{R})$ cocycles \cite{Yoccoz2004some, Avila2010jairo}, and are closely related to Patterson-Sullivan measures on limit sets of some Schottky groups \cite{Li2021Naud, Naud2005exp, Bour2017dya}, among others.

Combining Corollary \ref{corollary-analyic-decay} with  very recent results of Algom, Rodriguez Hertz, and Wang \cite{algom2023polynomial}, we can show that all such measures have polynomial Fourier decay, as long as $\Phi$ is not entirely made up of affine maps. Denoting by $\widehat{\nu} (\lambda)$ the Fourier transform of a Borel probability measure $\nu$, we have:
\begin{theorem} \label{Coro main}
Let $\Phi$ be a $C^\omega (\mathbb{R})$ IFS. If $\Phi$ contains a non-affine map then every non-atomic self-conformal measure $\nu$ admits some $\tau>0$ such that
\begin{equation*}
\widehat{\nu} (\lambda) = O\left(\frac{1}{|\lambda|^\tau} \right)\, \text{as}\; \left| \lambda \right|\rightarrow \infty.
\end{equation*}
\end{theorem}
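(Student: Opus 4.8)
The plan is to reduce the general self-conformal case to Corollary \ref{corollary-analyic-decay} via the standard nonlinearity dichotomy for real analytic IFSs. First I would invoke the results of Algom, Rodriguez Hertz, and Wang \cite{algom2023polynomial}: for a $C^\omega(\mathbb{R})$ IFS $\Phi$ containing a non-affine map, either the attractor is a single point (excluded here, since $\nu$ is non-atomic), or the IFS is not $C^\omega$-conjugate to a self-similar (linear) system. In the latter case one extracts a \emph{bounded non-linearity / non-degeneracy} conclusion from \cite{algom2023polynomial}: there exists a word $\mathtt{w}$ in the semigroup generated by $\Phi$ such that the inverse branch $f_{\mathtt{w}}$ has non-constant derivative, equivalently $g_{\mathtt{w}} := \log |f_{\mathtt{w}}'|$ is non-affine (indeed non-constant) real analytic on $J$. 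This is the structural input I would quote verbatim.

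Next I would pass to a self-similar \emph{model} measure. By the self-conformal relation $\nu = \sum_i p_i f_i\nu$ and iteration, $\nu$ is a countable convex combination of push-forwards $f_{\mathtt{u}}\nu$ over words $\mathtt{u}$ of a fixed length $N$; choosing $N$ so that one such word equals (or extends) the good word $\mathtt{w}$, it suffices to bound $\widehat{f_{\mathtt{w}}\nu}(\lambda)$ with a uniform rate, since the triangle inequality over the finitely many length-$N$ words gives the bound for $\widehat\nu$. Now $\widehat{f_{\mathtt{w}}\nu}(\lambda) = \int e^{2\pi i \lambda f_{\mathtt{w}}(x)}\, d\nu(x)$, so the phase is the real analytic non-affine contraction $g = f_{\mathtt{w}}$ itself. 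The remaining issue is that $\nu$ is self-conformal, not self-similar, so Corollary \ref{corollary-analyic-decay} does not apply to it directly.

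To close this gap I would use the comparison of $\nu$ with a genuine self-similar measure adapted to the same cylinder structure: linearizing the contractions $f_i$ (via a $C^\omega$ change of coordinates near each fixed point, or more simply by the bounded distortion property of $C^\omega$ conformal IFSs) produces a self-similar IFS $\widetilde\Phi$ and self-similar measure $\widetilde\mu$ such that the identity map, or a fixed $C^{1+\gamma}$ diffeomorphism $h$, pushes $\widetilde\mu$ to $\nu$ up to an absolutely continuous, bounded-density reweighting on each cylinder. Then $\int e^{2\pi i \lambda g(x)}\, d\nu(x)$ becomes $\int e^{2\pi i \lambda (g\circ h)(x)}\, \phi(x)\, d\widetilde\mu(x)$ with $g \circ h$ non-affine real analytic (or at worst $C^{1+\gamma}$ and non-flat) and $\phi$ bounded; an application of the more general Theorem \ref{thm:van_der_Corput_C1+a}, together with a routine partition-of-unity / summation-by-parts argument to absorb the weight $\phi$, yields $|\widehat{f_{\mathtt{w}}\nu}(\lambda)| \le C|\lambda|^{-\tau}$ with $\tau = \tau(\widetilde\mu)$ uniform.

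The main obstacle I anticipate is precisely this last reduction: controlling the self-conformal measure by a self-similar one in a way that is quantitatively compatible with the van der Corput estimate — in particular ensuring the linearizing conjugacy exists globally on $J$ (not just locally near fixed points) and that the resulting non-flatness of the phase $g \circ h$ does not degenerate. If a clean global conjugacy is unavailable, the fallback is to work cylinder-by-cylinder: decompose $\nu$ over deep cylinders $f_{\mathtt{u}}$, on each of which $\nu$ restricted and renormalized is comparable to a translate/scaling of a fixed self-similar measure with uniformly bounded distortion, apply Corollary \ref{corollary-analyic-decay} on each piece, and sum — the number of cylinders is controlled by a power of $\log|\lambda|$, which is harmless against a polynomial gain. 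This is the step I would expect to require the most care, and where citing the precise form of \cite{algom2023polynomial} is essential.
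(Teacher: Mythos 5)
There is a genuine gap here, and it lies exactly where you predicted it would: the reduction from the self-conformal measure $\nu$ to a self-similar one. First, the dichotomy you import from \cite{algom2023polynomial} is stated backwards. The relevant dichotomy is: either $\Phi$ is $C^\omega$-conjugate to a self-similar IFS (the \emph{linearizable} case), or it is not. It is in the \emph{linearizable} case that Corollary \ref{corollary-analyic-decay} is needed, and there the reduction is exact rather than approximate: the linearizing conjugacy $h$ satisfies $\nu=h\mu$ for a genuine self-similar measure $\mu$, and $h$ must be non-affine analytic (otherwise $\Phi$ itself would be self-similar, contradicting the presence of a non-affine map), so $\widehat{\nu}(\lambda)=\int e^{2\pi i\lambda h(x)}\,d\mu(x)=O(|\lambda|^{-\tau})$ directly from Corollary \ref{corollary-analyic-decay}, with no reweighting, no partition of unity, and no cylinder decomposition. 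In the non-linearizable case no such conjugacy exists, and \cite[Section 6]{algom2023polynomial} handles it by a cocycle version of Dolgopyat's method (cf.\ also \cite{baker2023spectral}), which is an entirely different mechanism that does not pass through any van der Corput estimate for self-similar measures. The present paper's ``proof'' of Theorem \ref{Coro main} consists precisely of supplying the missing estimate \eqref{eq:van_der_Corput analytic} and citing \cite{algom2023polynomial} for everything else.

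Your proposed fallback — comparing $\nu$ on deep cylinders to a self-similar model via bounded distortion, with an absolutely continuous bounded-density reweighting — does not work in the non-linearizable case and is unnecessary in the linearizable one. Bounded distortion gives comparability of cylinder \emph{masses}, not absolute continuity of $\nu$ with respect to a self-similar measure with bounded density; the renormalized restriction of a self-conformal measure to a cylinder is an image of $\nu$ itself under a bounded-distortion contraction and never becomes self-similar unless a global smooth linearization exists. Moreover, the number of cylinders at the scale relevant to a frequency $\lambda$ is polynomial in $\lambda$ (only the number of distinct contraction ratios is $O(\log\lambda)$), so even if each piece could be estimated, summing over cylinders would not be ``harmless against a polynomial gain'' by a logarithmic count. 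To repair the argument you must either (i) in the linearizable case, use the exact identity $\nu=h\mu$ and apply Corollary \ref{corollary-analyic-decay} with phase $h$, and (ii) in the non-linearizable case, invoke the Dolgopyat-type result of \cite{algom2023polynomial} as a black box — there is no route to that case through the self-similar van der Corput lemma of this paper.
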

This result was first announced in the work of Algom, Rodriguez Hertz, and Wang \cite[Corollary 1.2]{algom2023polynomial}. The last part of its proof in \cite[Section 6]{algom2023polynomial} required the estimate \eqref{eq:van_der_Corput analytic} that was announced in \cite[Theorem 6.5] {algom2023polynomial}. Thus, with \eqref{eq:van_der_Corput analytic} we complete the proof of Theorem \ref{Coro main}. Also, recall from our discussion at the end of Section \ref{Section intro 1} that Baker and Banaji \cite{Baker2023Banaji} obtained Theorem \ref{Coro main} simultaneously and independently of our work. See also the related work of Baker-Sahlsten \cite{baker2023spectral} regarding the non-linearizable case.

We emphasize that except for the existence of a non-affine map,  no further conditions are imposed on the IFS. Thus, Theorem \ref{Coro main} extends the recent work of Sahlsten-Stevens \cite[Theorem 1.1]{sahlsten2020fourier} by removing their total non-linearity and separation assumptions. Other related significant works include those of Li \cite{Li2018decay, li2018fourier} about Furstenberg measures, Bourgain-Dyatlov \cite{Bour2017dya} about Patterson-Sullivan measures, Baker and Sahlsten \cite{baker2023spectral} about totally non-linear $C^2$ IFSs, and our previous works \cite{algom2020decay, algom2021decay}. See the recent survey \cite{sahlsten2023fourier} for more discussion about the recent breakthroughs on the Fourier decay problem.

Finally, we remark that the only concrete examples of polynomial Fourier decay in the fully self-similar setting were given by Dai-Feng-Wang \cite{Dai2007Feng} and Streck  \cite{streck2023absolute} for some homogeneous IFSs. It is known, though, that most self-similar IFSs should satisfy this property; see Solomyak \cite{Solomyak2021ssdecay}. For more recent results  on Fourier decay for self-similar measures we refer to \cite{bremont2019rajchman, li2019trigonometric, Solomyak2021ssdecay, rapaport2021rajchman, varju2020fourier, algom2020decay, Shmerkin2018mos, Buf2014Sol, Dai2007Feng, Dai2012ber, streck2023absolute} and references therein.

\subsection{Main technical Theorem: van der Corput for phases with non-flat derivatives} \label{Section main tech}
In this Section we will state and discuss our main technical result, Theorem \ref{thm:van_der_Corput_C1+a}. It implies Theorem \ref{thm:van_der_Corput_Ck}, and thus Corollary \ref{corollary-analyic-decay}, as special cases. The  results regarding fine statistics of sequences,  Theorem \ref{Main Theorem pos}, follow from Theorem \ref{main-2}, which requires  delicate fine-tuning of the  proof of Theorem \ref{thm:van_der_Corput_C1+a} for certain exponential functions.

We first formulate the following definition. For a given bounded set $A\subseteq \mathbb{R}$ let $\mathcal{N}_r (A)$ denote the minimal number of balls of diameter $r$ needed to cover $A$.

\begin{Definition} \label{def:non-flat} 
    Let $0 < \delta \le 1$. A function $h: \R \to \R$ is called $\delta$-non-flat on an interval $J \subset \R$ if:

For all $\epsilon > 0$ there exists $n_0 = n_0(\epsilon)\geq 1$ such that for all $n > n_0$ and every bounded subset $A \subset J$  we have 
    \[ \mathcal{N}_{2^{-\delta n}}(A) \ge 2^{\epsilon n} \Rightarrow
        \text{diam}(h(A)) \ge 2^{-n}.
    \]
\end{Definition}
The idea is that the range of a $\delta$-non-flat function on a set of positive dimension can not be too small. Note that by definition, affine functions on $\mathbb{R}$ are 1-non-flat, and for any $0<\delta_1<\delta_2\leq 1$, $\delta_1$-non-flatness implies $\delta_2$-non-flatness. In addition, in  \cite[prior to Theorem 6.2]{Melo1993strein}, de Melo and van Strien discuss a related notion of non-flatness of a $C^1$ function: By their definition, a function $h\in C^1(J)$  is non-flat at a point $c\in J$ if there is a $C^2 (\mathbb{R},J)$ diffeomorphism $\varphi$ such that $\varphi(0)=c$ and $h\circ \varphi$ is a polynomial (that we assume is non-trivial). It follows from the results in Section \ref{Section flatness} that, locally, if $h$ is non-flat in the sense of de Melo and van Strien then it is non-flat in the sense of Definition \ref{def:non-flat}.

We can now state our main technical result.
\begin{theorem} \label{thm:van_der_Corput_C1+a}
   Let $\mu$ be a non-atomic self-similar measure supported on a closed interval $J\subseteq \R$. Let $g \in C^{1+\alpha}(J)$ for some $0 < \alpha \le 1$. Suppose that $g'$ is $\delta$-non-flat on $J$. Then there exist constants $\tau=\tau(\mu,\delta, \alpha)$ and $C=C(g) > 0$ such that 
        \[
        \left| \int e^{2 \pi i g(x) \lambda} d \mu(x) \right| \le C  \lambda  ^{-\tau }\quad  \text{for}\; \lambda>0.
        \]
\end{theorem}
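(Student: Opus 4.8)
The plan is to exploit the self-similarity of $\mu$ to reduce the oscillatory integral on $J$ to a large collection of oscillatory integrals on small cylinders, and then to argue that, for most cylinders, the phase $g$ restricted to the cylinder behaves enough like an affine map with large slope (driven by $\lambda$) that each such piece contributes cancellation. Concretely, fix a large integer $m$ and write $\mu = \sum_{\mathbf{i} \in \{1,\dots,n\}^m} p_{\mathbf{i}} \, f_{\mathbf{i}} \mu$, where $f_{\mathbf{i}} = f_{i_1} \circ \cdots \circ f_{i_m}$ has contraction ratio $r_{\mathbf{i}} = r_{i_1}\cdots r_{i_m}$. Then
\[
\int e^{2\pi i g(x)\lambda}\, d\mu(x) = \sum_{\mathbf{i}} p_{\mathbf{i}} \int e^{2\pi i g(f_{\mathbf{i}}(y))\lambda}\, d\mu(y),
\]
and on each cylinder $f_{\mathbf{i}}(J)$ the phase is $g \circ f_{\mathbf{i}}$, whose derivative is $(g'\circ f_{\mathbf{i}}) \cdot r_{\mathbf{i}}$. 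The point is to choose the word-length $m = m(\lambda)$ so that $|r_{\mathbf{i}}| \cdot \lambda$ is of a controlled polynomial size $\lambda^{\kappa}$ for some $\kappa \in (0,1)$: the cylinders are then small enough that the $C^{1+\alpha}$ regularity of $g$ makes $g\circ f_{\mathbf{i}}$ close to affine on the scale of $f_{\mathbf{i}}(J)$, yet the effective frequency $\lambda^{\kappa}$ is still large.

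The mechanism for extracting cancellation from a single near-affine piece with large frequency should be a $\mu$-analogue of the classical van der Corput / stationary phase estimate; this is exactly where the $\delta$-non-flatness of $g'$ enters. The idea is: if $g'$ does not vary much on $f_{\mathbf{i}}(J)$ — i.e., $g \circ f_{\mathbf{i}}$ is nearly affine with slope $s_{\mathbf{i}}$ and $|s_{\mathbf{i}}| \lambda$ large — then $\bigl|\int e^{2\pi i g(f_{\mathbf{i}}(y))\lambda} d\mu(y)\bigr|$ is controlled by the Fourier decay of the self-similar measure at frequency $\sim |s_{\mathbf{i}}|\lambda$, which after a further renormalization is a self-similar measure problem at a smaller scale; conversely, if $g'$ \emph{does} vary a lot on $f_{\mathbf{i}}(J)$, then $g(f_{\mathbf{i}}(J))$ is a genuinely ``curved'' image and one exploits the non-flatness hypothesis to say the image is spread out. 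The quantitative heart of the matter is a large deviations input — as the introduction signals, Tsujii's large deviations estimate \cite{Tsujii2015self} — which guarantees that the contraction ratios $|r_{\mathbf{i}}|$ concentrate exponentially around $e^{-m\chi}$ (with $\chi = -\sum p_i \log|r_i|$ the Lyapunov exponent), so that for all but an exponentially small proportion of words $\mathbf{i}$ the renormalization lands in the ``good'' regime where the effective frequency is genuinely large. Summing the good contributions against $\sum p_{\mathbf{i}} = 1$ and bounding the exceptional words trivially by their (exponentially small) total mass should yield a net bound of the form $\lambda^{-\tau}$.

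Thus the key steps, in order, are: (1) set up the $m$-fold self-similar decomposition and choose $m = m(\lambda) \asymp \frac{\log \lambda}{\chi}\cdot(1-\kappa)$ so that typical cylinders have effective frequency $\lambda^{\kappa}$; (2) use the large deviations estimate to discard the exceptional set of words with atypical contraction, at an exponentially small cost in $\lambda$; (3) on each good cylinder, use $g \in C^{1+\alpha}$ together with the $\delta$-non-flatness of $g'$ to reduce $\int_{f_{\mathbf{i}}(J)} e^{2\pi i g\lambda}d\mu$ to an estimate of the same shape at the smaller scale, gaining a definite power; (4) iterate / bootstrap this reduction (or run it once and feed in a known weak decay estimate) to close the induction and recover the exponent $\tau = \tau(\mu,\delta,\alpha)$, tracking that $C$ depends only on $\|g\|_{C^{1+\alpha}}$ and the geometry of $J$. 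I expect step (3) to be the main obstacle: making precise the dichotomy ``$g'$ nearly constant on the cylinder $\Rightarrow$ reduce to a smaller-scale self-similar Fourier integral'' versus ``$g'$ oscillates on the cylinder $\Rightarrow$ non-flatness forces the image to have large diameter, which in turn forces decay via a covering argument'' — and in particular showing that the $\delta$ in the non-flatness hypothesis can be matched to the Hölder exponent $\alpha$ and the scale $|r_{\mathbf{i}}|$ without losing all the gain — is the delicate part, and is presumably where the precise interplay between $\delta$, $\alpha$, and the dimension of $\mu$ fixes the value of $\tau$.
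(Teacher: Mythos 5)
Your Step (1) --- the cylinder decomposition and linearization of $g\circ f_{\mathbf i}$ using the $C^{1+\alpha}$ regularity, with the scale chosen so that the effective frequency $|r_{\mathbf i}|\lambda$ is a controlled power $\lambda^{\kappa}$ --- matches the paper's Step 1. But the core of your argument misreads what Tsujii's large deviations estimate provides, and this leaves a genuine gap. Tsujii's theorem is not a statement about the contraction ratios $|r_{\mathbf i}|$ concentrating around $e^{-m\chi}$ (that concentration is elementary, and the paper sidesteps it entirely by working with the cut-set $\mathcal{W}_b=\{\omega: |r_\omega|\le b<|r_{\omega^-}|\}$, on which all ratios are automatically comparable to $b$). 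It is a large deviations estimate for the Fourier transform itself: the set of frequencies $\xi\in[-e^t,e^t]$ with $|\widehat\mu(\xi)|\ge e^{-ct}$ can be covered by $e^{\epsilon t}$ unit intervals. This input is indispensable, because after linearization the problem becomes bounding $\sum_\omega p_\omega\,|\widehat\mu(g'(t_\omega)\,r_\omega\,\lambda)|$, and a general non-atomic self-similar measure need not satisfy $\widehat\mu(\xi)\to0$ at all (e.g.\ Bernoulli convolutions with Pisot parameter). Consequently your mechanism in Step (3) --- ``if $g'$ is nearly constant on the cylinder, control the piece by the Fourier decay of $\mu$ at frequency $\sim|s_{\mathbf i}|\lambda$, which after renormalization is a self-similar problem at a smaller scale'' --- cannot close: there is no base case to bootstrap from, since the linear-phase integral $\widehat\mu(\xi)$ may simply fail to decay no matter how many times you renormalize.

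What the paper does instead, and what is missing from your outline, is the interplay between three ingredients. For each fixed ratio class $r$, Tsujii's estimate (applied at $t\approx\log|\lambda r|$) says that at most $|\lambda r|^{\epsilon}$ unit intervals $[n,n+1]$ contain a frequency where $|\widehat\mu|\ge|\lambda r|^{-c_0}$. The $\delta$-non-flatness of $g'$ is then used not to argue that the image $g(f_{\mathbf i}(J))$ is spread out, but to bound, for each single bad interval $[n,n+1]$, the total $\mathbf{p}$-mass of words $\omega$ with $g'(t_\omega)r\lambda\in[n,n+1]$: non-flatness forces the set $\{t_\omega: g'(t_\omega)\in[\tfrac{n}{r\lambda},\tfrac{n+1}{r\lambda}]\}$ to have few $(r\lambda)^{-\delta}$-separated points, and the Frostman exponent $s_0$ of $\mu$ (Feng--Lau) converts this into a mass bound $O\bigl((r\lambda)^{\delta s_0/2}\bigl((r\lambda)^{-\delta}+2r\bigr)^{s_0}\bigr)$. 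Multiplying (number of bad intervals) by (mass per bad interval), adding the good words' contribution $|\lambda b|^{-c_0}$ and the linearization error $O(b^{1+\alpha}\lambda)$, and optimizing $b=\lambda^{-\gamma}$ with $\tfrac{1}{1+\alpha}<\gamma<1$ yields $\tau$ in a single pass; no iteration is used or needed. Without Tsujii's frequency-side sparseness and the non-flatness-plus-Frostman mass bound, the dichotomy you propose in Step (3) does not produce any decay.
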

By Lemma \ref{non-flat-smooth} if $g \in C^k(J)$ with $k \ge 1$ and  there is some $c_0>0$ with $\max_{1\leq j\leq k}|g^{(j)}(x)| \geq c_0$ for any $x \in J$, then it is $\frac{1}{2k}$-non-flat. Thus, Theorem \ref{thm:van_der_Corput_C1+a} implies Theorem  \ref{thm:van_der_Corput_Ck} as a special case.

\subsection{Outline of method} \label{Section Sketch}
In this section we sketch the proof of Theorem \ref{thm:van_der_Corput_Ck}. We work under the  stronger assumption that the phase $g$ satisfies $g''>0$ on the interval; the general case, as well as our other main results,  is proved via subtle optimizations  of this method. So, let $\mu$ be a non-atomic self-similar measure supported on, say, $[0,1]$, and let $g\in C^2 ([0,1])$ be a  diffeomorphism   such that $g''>0$. We show that there exists some $\alpha>0$ such that for $\lambda>0$
\begin{equation*}
\left| \int_{0} ^1 e^{2 \pi i g(x) \lambda} d \mu(x) \right| =  O\left( \lambda ^{-\alpha}\right).
\end{equation*} 

First, we recall the main ingredients in the approach of Kaufman \cite{Kaufman1984ber} and Mosquera-Shmerkin \cite{Shmerkin2018mos}, in the special case where the IFS is homogeneous; that is, all the maps in $\Phi = \lbrace f_1,...,f_n \rbrace$ share the same contraction ratio. Here, the measure $\mu$ is known to have a convolution structure, so  we may write explicitly
$$\widehat{\mu}(\lambda) = \prod_{k=1} ^\infty \left( \sum_{i=1} ^n p_i \exp\left(2 \pi t_i r^k \lambda \right) \right) , \text{ where } f_i(x)=r\cdot x+t_i \text{ for each }1\leq i \leq n.$$ 
From this decomposition, Kaufman, and later Mosquera-Shmerkin, first  showed  polynomial decay outside of a sparse set of frequencies:
\begin{equation} \label{Erdos-Kahane}
\forall \epsilon>0 \, \exists c_0>0\, \text{ s.t. for } T\gg 1, \, \left \lbrace \lambda\in [-T,T]:\, \left| \widehat{\mu}(\lambda) \right| \geq T^{-c_0}  \right \rbrace \text{ can be covered by } T^\epsilon \text{ intervals of length } 1.
\end{equation}
 In fact, Mosquera-Shmerkin \cite[Proposition 2.3]{Shmerkin2018mos} proved a quantitative version of this result, which enabled them to give explicit lower bounds on the Fourier dimension of $g\mu$ in some cases. The proof then proceeded by combining the previous two displayed equations; see, e.g., \cite[Theorem 3.1]{Shmerkin2018mos}. Tsujii \cite{Tsujii2015self}  extended \eqref{Erdos-Kahane} to all self-similar measures (without relying on convolution structure).  Recently, Khalil \cite[Corollary 1.8]{khalil2023exponential} proved a very general form of this result.

The main innovation of this paper is the introduction of a new method to  obtain polynomial Fourier decay for $g\mu$  from a large deviations estimate as in \eqref{Erdos-Kahane}, for $\mu$  \textit{without} a convolution structure, and for more general phase functions. Our initial observation is that we can rewrite \eqref{Erdos-Kahane}  as follows:
\begin{equation} \label{Erdos Kahane 2}
\forall \epsilon>0 \, \exists c>0\, \text{ s.t. for } t\gg 1,\quad \sharp \left\lbrace n\in \mathbb{Z}: \exists \lambda\in [n,n+1]\cap [-e^{t}, e^{t}] \text{ such that } \left| \widehat{\mu}(\lambda) \right|\geq e^{-c\cdot t} \right\rbrace  \leq e^{\epsilon \cdot t}.
\end{equation}

At this point we need some notation: Let us fix our self-similar IFS $\Phi= \lbrace f_1,...,f_n \rbrace$ and denote $\mathcal{A}:=\lbrace1,..,n\rbrace$. For every $i\in \mathcal{A}$ we write
$$f_i(x)=r_i \cdot x +t_i, \text{ where } 0<|r_i|<1  \text { and } t_i \in \mathbb{R}.$$ 
For every $\omega \in \mathcal{A}^* := \cup_{i \ge 1} \mathcal{A}^i$ of length $|\omega|=\ell$ write, recalling that $\mu=\mu_\mathbf{p}$ for the probability vector $\mathbf{p}=(p_1,...,p_n)$,
\begin{equation} \label{r of omega}
r_\omega:= r_{\omega_1}\cdot \cdot \cdot r_{\omega_\ell}, \text { and } f_\omega (x) =r_\omega\cdot x + t_\omega, \text{ where } t_\omega := f_\omega (0), \text{ and also } p_\omega = p_{\omega_1}\cdot \cdot \cdot p_{\omega_\ell}.
\end{equation}
For a word $\omega\in \mathcal{A}^*$ let $\omega^{-}$ denote the word of length $|\omega|-1$ obtained from $\omega$ by forgetting its last letter. For sufficiently small (not necessarily integer) $b>0$ we consider the following cut-set  of $\mathcal{A}^\mathbb{N}$:

\begin{equation} \label{Def W k}
\mathcal{W}_b  = \left\lbrace \omega \in \mathcal{A}^*: |r_\omega| \leq b < |r_{\omega^{-}}| \right \rbrace.
\end{equation}

Now, the first step in our proof is, using self-similarity and the $C^2$ assumption, to linearize: 
$$\left|\int_{0} ^1 e^{2\pi i \lambda g(x)}dx\right| = \left| \widehat{ g \mu} \left( \lambda \right) \right| \leq  \sum_{\omega \in \mathcal{W}_b} p_\omega\cdot \left| \widehat{\mu} \left( g'(t_\omega) r_\omega \lambda \right) \right|   + O\left(\lambda\cdot b^2 \right),$$
where  $b=b(\lambda)>0$ is an auxiliary parameter to be chosen later.

Next, fixing yet another auxiliary parameter $\epsilon>0$,  there is a $c_0>0$ such that \eqref{Erdos Kahane 2} holds when taking $t\approx \log \left( \lambda\cdot b \right)$. Thus, we can partition $\mathcal{W}_b $ into 
$$X:= \left \lbrace \omega:\,  \left| \widehat{\mu} \left( g'(t_\omega) r_\omega \lambda \right) \right| \geq \left| \lambda \cdot r_{\omega} \right|^{-c_0} \right\rbrace, \text{ and } \mathcal{W}_b \setminus X.$$ 
It is clear that 
$$\sum_{\omega \in \mathcal{W}_b \setminus X} p_\omega\cdot \left| \widehat{\mu} \left( g'(t_\omega) r_\omega \lambda \right) \right| < \left| \lambda \cdot b \right|^{-c_0}.$$
So, it remains to control 
$$\sum_{\omega \in \mathcal{W}_b} p_\omega\cdot \left| \widehat{\mu} \left( g'(t_\omega) r_\omega \lambda \right) \right|.$$

This is where \eqref{Erdos Kahane 2} and the non-linearity condition $g''>0$ enter the picture. Fix $n\in \mathbb{Z}$ such  that:
$$ [n,n+1] \text { contains some } g'(t_\omega) r_\omega \lambda  \text { with }  \left| \widehat{\mu} \left( g'(t_\omega) r_\omega \lambda \right) \right| \geq \left| \lambda \cdot r_{\omega} \right|^{-c_0}.$$ Roughly speaking, using the non-linearity of $g$ (or, more generally, the non-flatness of $g'$), and the H\"{o}lder regularity of $\mu$ (\cite{Feng2009Lau} and Proposition \ref{Lemma Feng}), one can show that for some uniform $s_0=s_0(\mu)>0$, 
\begin{equation} \label{Eq sum}
\sum_{\omega \in \mathcal{W}_b} p_\omega\cdot \textbf{1}_{\lambda \cdot g'(t_{\omega})\cdot r_{\omega}\in [n,n+1]} (\omega) \leq \mu \left( \text{ some ball of radius } \lambda \cdot b \right) =O\left( \left( \lambda \cdot b \right)^{-s_0} \right).
\end{equation}
The last displayed equation means that in each such interval $[n,n+1]$ the probability of having a "bad" frequency is very small; in addition, \eqref{Erdos Kahane 2} means that the number of such ``bad'' intervals is sub-polynomial - here it can be shown to be less than $\left( \lambda \cdot b \right)^{\epsilon}$. Thus, taking tally of our calculation and disregarding global multiplicative constants,
$$\left|\int_{0} ^1 e^{2\pi i \lambda g(x)}dx\right| \leq \left| \lambda \cdot b\right|^{-c_0}+\left| \lambda \cdot b \right|^{\epsilon}\cdot \left| \lambda \cdot b \right|^{-s_0}+\left| \lambda\cdot b \right|^2.$$
This  can be made polynomially small in $\lambda$ by a careful choice of the parameters $b$ and $\epsilon$ (note that $s_0=s_0(\mu)$ and $c_0=c_0(\epsilon)$ are not free for us to choose).

Finally, we remark that in practice we will derive the inequality \eqref{Eq sum} not on the full space $\mathcal{W}_b$, but rather only upon a further partition of this space into those $\omega,\eta$ such that $r_\omega=  r_\eta$. This introduces another factor, the number of such distinct $r_\omega$, that is large in $\lambda$ into the previously displayed equation; however, standard properties of self-similar IFSs show that it can be made logarithmic in $b=b(\lambda)$, and so the proof goes through.

\subsection{Acknowledgments}
We thank Federico Rodriguez Hertz, Zhiren Wang, Simon Baker,  Amlan Banaji,  Hongfei Cui,  and Osama Khalil, for many helpful comments. The first author is supported by Grant No. 2022034 from the United States - Israel Binational Science Foundation (BSF), Jerusalem, Israel. The last three authors are supported by the Academy of Finland,  project grant No. 318217.  The second author was partially supported
by NSFC 11901204 and 12271418.

\section{Preliminaries} \label{Section pre}
\subsection{Some properties of self-similar measures} 
We retain the notations setup in Section \ref{Section intro}, and in particular in Section \ref{Section Sketch}: Specifically, our self-similar IFS $\Phi= \lbrace f_1,...,f_n \rbrace$, $\mathcal{A}:=\lbrace1,..,n\rbrace$, and  the use of \eqref{r of omega} for symbolic notation. Recall that there is no loss of generality in assuming that 
$$K:=K_\Phi \subseteq [0,1].$$
This assumption will be retained throughout the paper.

Recall that we are  working with a self-similar measure $\mu=\mu_\mathbf{p}$ with respect to a strictly positive probability vector $\mathbf{p}\in \mathcal{P}(\mathcal{A})$. Since $K_\Phi$ is assumed to be infinite, this is well known to imply that $\mu$ is not atomic. In fact, as shown by Feng and Lau \cite{Feng2009Lau}, $\mu$ satisfies a much stronger H\"{o}lder regularity property: 
\begin{Proposition} \label{Lemma Feng} \cite[Proposition 2.2]{Feng2009Lau} Let $\mu$ be a non-atomic self-similar measure. Then there exists some $s_0=s_0(\mu)>0$ such that for all $r>0$
$$\sup_{x\in \mathbb{R}} \mu\left(B(x,r)\right) =O\left(r^{s_0}\right).$$
\end{Proposition}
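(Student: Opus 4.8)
The plan is to derive the estimate from non-atomicity alone, by a self-similar renormalisation that uses \emph{no} separation hypothesis on $\Phi$; write $r_{\min}:=\min_{1\le i\le n}|r_i|\in(0,1)$. The starting point is a base-scale bound: since $\mu$ is a regular Borel probability measure supported on the compact set $K\subseteq[0,1]$ and has no atoms, one has $\sup_{x\in\mathbb{R}}\mu([x-\rho,x+\rho])\to 0$ as $\rho\to 0^{+}$ — otherwise there would be $c>0$, $\rho_j\to 0$ and centres $x_j$ with $\mu([x_j-\rho_j,x_j+\rho_j])\ge c$, and a subsequential limit $x_\ast$ of $(x_j)$ would satisfy $\mu(\{x_\ast\})\ge c$, a contradiction. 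Hence I can fix a scale $\rho_0>0$ and a constant $\theta\in(0,1)$ such that $\mu(I)\le\theta$ for every interval $I$ with $|I|\le\rho_0$; only the strict inequality $\theta<1$ will be used.

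The heart of the argument is a one-step renormalisation inequality: \emph{for every interval $I$ with $\ell:=|I|<r_{\min}\rho_0$ one has $\mu(I)\le\theta\,\mu(\widetilde I)$, where $\widetilde I$ is the closed $b$-neighbourhood of $I$ with $b:=\ell/(r_{\min}\rho_0)\in(0,1)$, so that $|\widetilde I|=\kappa\ell$ with $\kappa:=1+2/(r_{\min}\rho_0)>1$.} To see this, decompose $\mu=\sum_{\omega\in\mathcal{W}_b}p_\omega f_\omega\mu$ via the cut-set $\mathcal{W}_b=\{\omega\in\mathcal{A}^*:\,|r_\omega|\le b<|r_{\omega^-}|\}$; every $\omega\in\mathcal{W}_b$ satisfies $r_{\min}b<|r_\omega|\le b$, so on the one hand the cylinder $f_\omega([0,1])$ has length $\le b$ — hence $f_\omega\mu(I)\neq 0$ forces $f_\omega([0,1])\subseteq\widetilde I$ — and on the other hand $f_\omega^{-1}(I)$ is an interval of length $\ell/|r_\omega|<\ell/(r_{\min}b)=\rho_0$, so $f_\omega\mu(I)=\mu(f_\omega^{-1}I)\le\theta$ by the base scale. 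Therefore $\mu(I)\le\theta\sum_{\omega\in\mathcal{W}_b:\,f_\omega([0,1])\subseteq\widetilde I}p_\omega$, and this last sum is $\le\mu(\widetilde I)$, because for such $\omega$ one has $f_\omega^{-1}(\widetilde I)\supseteq[0,1]\supseteq K$, whence $f_\omega\mu(\widetilde I)=1$, and so $\mu(\widetilde I)=\sum_{\eta\in\mathcal{W}_b}p_\eta f_\eta\mu(\widetilde I)\ge\sum_{\omega:\,f_\omega([0,1])\subseteq\widetilde I}p_\omega$.

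I would then iterate: given a short interval $I$ of length $\ell$, apply the renormalisation step to $I$, then to its $\kappa$-enlargement, and so on, $k$ times — which is legitimate as long as the current length stays below $r_{\min}\rho_0$, hence for $k$ of order $\log(1/\ell)/\log\kappa$ — obtaining $\mu(I)\le\theta^{k}\le C\,\ell^{s_0}$ with $s_0:=\log(1/\theta)/\log\kappa>0$ and $C=C(\mu)$. The point is that the enlargement factor $\kappa$ only limits the \emph{range} of scales over which the iteration can be run and never enters the bound $\theta^k$, so no constants accumulate. For $\ell$ bounded away from $0$ the bound is trivial after enlarging $C$. Applying this to $I=B(x,r)$, uniformly in $x$, gives $\sup_x\mu(B(x,r))=O(r^{s_0})$.

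The step I expect to be delicate is the choice of the cut-set scale $b$: it must simultaneously make the cylinders $f_\omega([0,1])$ so short that the total $p$-mass of those meeting $I$ is controlled by the $\mu$-mass of a small neighbourhood of $I$ — this is where non-atomicity is exploited, and where no separation condition is needed, since we bound a sum of $p_\omega$ by a measure of a set rather than by counting overlapping cylinders — and yet make the preimages $f_\omega^{-1}(I)$ short enough to feed back into the base scale $\rho_0$. The value $b=\ell/(r_{\min}\rho_0)$ is exactly the choice reconciling both requirements, and it is precisely the use of a cut-set $\mathcal{W}_b$, rather than of words of a fixed length, that makes this reconciliation possible, since the contraction ratios of $\Phi$ need not be comparable.
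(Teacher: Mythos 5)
Your argument is correct, and it is worth noting that the paper does not prove this proposition at all -- it simply cites Feng--Lau \cite[Proposition 2.2]{Feng2009Lau} -- so what you have produced is a genuinely self-contained alternative. Each individual step checks out: the compactness argument gives the base-scale bound $\sup_x\mu(I)\le\theta<1$ for $|I|\le\rho_0$ from non-atomicity alone; in the renormalisation step the two constraints on the cut-set scale are compatible exactly as you claim, since every $\omega\in\mathcal{W}_b$ has $r_{\min}b<|r_\omega|\le b$, so $f_\omega([0,1])$ meeting $I$ forces $f_\omega([0,1])\subseteq\widetilde I$ while $|f_\omega^{-1}(I)|<\ell/(r_{\min}b)=\rho_0$; and the inequality $\sum_{\omega:\,f_\omega([0,1])\subseteq\widetilde I}p_\omega\le\mu(\widetilde I)$, obtained by noting $f_\omega\mu(\widetilde I)=1$ for such $\omega$ and summing the stationarity identity over the full cut-set, is the right way to convert symbolic mass into geometric mass without any separation hypothesis. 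The iteration then gives $\mu(I)\le\theta^k$ with $k$ of order $\log_\kappa(1/\ell)$, hence the exponent $s_0=\log(1/\theta)/\log\kappa$. Feng and Lau's own proof runs on a different one-step mechanism: they locate two words $u,v$ with $f_u(K)\cap f_v(K)=\emptyset$ and show that a ball of suitably reduced radius must miss at least one of the two sub-copies inside each cut-set cylinder, yielding a contraction of the form $\sup_x\mu(B(x,\rho r))\le t\,\sup_x\mu(B(x,r))$ with $t=1-\min(p_u,p_v)$; your version instead fixes the ball and enlarges it, trading mass for scale in the other direction. Both iterate to the same polynomial bound; yours is softer (the constants $\theta,\rho_0$ come from a compactness argument rather than from an explicit pair of disjoint cylinders, so $s_0$ is less explicit), but it avoids having to produce the disjoint pair and is arguably the more transparent route given that the paper already sets up the cut-sets $\mathcal{W}_b$ and the stationarity identity of Lemma \ref{Lemma basic}.
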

Proposition \ref{Lemma Feng} means  that the Frostman exponent of $\mu$ is non-zero. We remark that it is quite common for this attribute of the measure to appear when estimating the decay of its Fourier transform, see e.g. \cite{Shmerkin2018mos, algom2020decay, algom2021decay, li2019trigonometric}; our estimates will involve it as well.

Next, recall the definition of the cut-set  $\mathcal{W}_b$ as in  \eqref{Def W k}. We denote the set of the corresponding contraction ratios by
\begin{equation} \label{Def ck}
\mathcal{C}_b := \left\lbrace r_{\omega}:\, \omega\in \mathcal{W}_b  \right\rbrace\subseteq (-1,1).
\end{equation}
We proceed to record a number of standard facts. Recall that $|\mathcal{A}|=n$. For $\omega \in \mathcal{A}^p$ we write $[\omega]$ to denote the set of  all $\eta \in \mathcal{A}^\mathbb{N}$ such that $\eta|_p = \omega|_p$. 
\begin{Lemma} \label{Lemma basic}
The following statements hold true for every $b>0$:
\begin{enumerate}
\item The collection of sets $\left\lbrace \left[\eta\right]:\, \eta\in \mathcal{W}_b  \right\rbrace$ forms a partition of $\mathcal{A}^\mathbb{N}$. Furthermore, 
$$\left| \mathcal{C}_b \right| = O \left( \log b \right), \text{ where the implicit constant is uniform in } b.$$

\item For any measurable map $g:\mathbb{R}\rightarrow \mathbb{R}$,
$$\int_{0} ^1 e^{2\pi i \lambda g(x)}\,d\mu(x) =  \widehat{ g \mu} \left( \lambda \right) =  \sum_{\omega \in \mathcal{W}_b} p_\omega\cdot \widehat{\mu} \left( g \left( f_\omega (x) \right) \lambda \right).$$
\end{enumerate}
\end{Lemma}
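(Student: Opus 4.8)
\textbf{Proof proposal for Lemma \ref{Lemma basic}.}

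The two assertions are essentially bookkeeping about the cylinder structure of $\mathcal{A}^\mathbb{N}$ together with the self-similarity of $\mu$. For part (1), the key point is that $\mathcal{W}_b$ is a \emph{cut-set}: every infinite word $\eta \in \mathcal{A}^\mathbb{N}$ has a unique finite prefix $\omega$ with $|r_\omega| \le b < |r_{\omega^{-}}|$, because $|r_{\eta_1 \cdots \eta_m}| = \prod_{j=1}^m |r_{\eta_j}|$ decreases strictly and tends to $0$ as $m \to \infty$ (all $|r_i| < 1$), and equals $1$ when $m=0$ by convention; hence it crosses the threshold $b$ exactly once. This gives that $\{[\eta] : \eta \in \mathcal{W}_b\}$ is a partition. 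For the cardinality bound on $\mathcal{C}_b$, I would argue as follows: set $r_{\max} = \max_i |r_i| < 1$ and $r_{\min} = \min_i |r_i| > 0$. Every $\omega \in \mathcal{W}_b$ satisfies $b \cdot r_{\min} < |r_\omega| \le b$, so its length $\ell = |\omega|$ obeys $r_{\max}^\ell \ge |r_\omega| > b r_{\min}$, i.e. $\ell \le \log(b r_{\min})/\log r_{\max} = O(|\log b|)$ (and similarly $\ell \ge \log b / \log r_{\min}$, also $O(|\log b|)$). Since each $r_\omega$ is a product of $\ell$ factors each drawn from the finite set $\{|r_1|, \dots, |r_n|\}$ — in fact $|r_\omega|$ is determined by how many times each $|r_i|$ appears — and $\ell = O(|\log b|)$, the number of distinct values of $|r_\omega|$ (and, up to sign, of $r_\omega$) is at most the number of such multisets, which is polynomial in $\ell$, hence... wait, that only gives a polynomial-in-$\log b$ bound. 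To get the sharper $O(\log b)$ claimed, one instead notes that the distinct values of $|r_\omega|$ for $\omega$ ranging over words of bounded length lie in the finite set $\{\prod |r_i|^{a_i} : \sum a_i \le \ell\}$; but more efficiently, I would invoke the standard fact (see the literature on self-similar IFSs) that if the $\log|r_i|$ are rationally related the number of distinct $r_\omega$ among words of length $\le \ell$ is $O(\ell)$, and if they are not, one works on the level sets $r_\omega = r_\eta$ as the authors indicate in Section \ref{Section Sketch} — in either case a bound of the form $O(|\log b|)$ holds. I will cite this as standard rather than reprove it.

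For part (2), the identity $\int_0^1 e^{2\pi i \lambda g(x)}\, d\mu(x) = \widehat{g\mu}(\lambda)$ is just the definition of the pushforward measure $g\mu$ and its Fourier transform. The substantive content is the decomposition $\widehat{g\mu}(\lambda) = \sum_{\omega \in \mathcal{W}_b} p_\omega \cdot \widehat{\mu}(g(f_\omega(x))\lambda)$, which I read as shorthand for $\sum_{\omega \in \mathcal{W}_b} p_\omega \int e^{2\pi i \lambda g(f_\omega(x))}\, d\mu(x)$. The plan is to iterate the self-similarity relation \eqref{Eq self-sim}. A single application gives $\mu = \sum_{i} p_i f_i \mu$, and iterating along a word $\omega = \omega_1 \cdots \omega_\ell$ gives $\mu = \sum_{\omega \in \mathcal{A}^\ell} p_\omega f_\omega \mu$ for every fixed length $\ell$; more generally, since $\mathcal{W}_b$ is a cut-set (by part (1)), a stopping-time / martingale-type argument — or simply grouping the words in $\mathcal{A}^\ell$ for $\ell$ large according to their prefix in $\mathcal{W}_b$ and using that $f_{\omega\sigma}\mu$-masses within a cylinder sum correctly — yields $\mu = \sum_{\omega \in \mathcal{W}_b} p_\omega f_\omega \mu$. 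Integrating $e^{2\pi i \lambda g(\cdot)}$ against both sides and using $\int e^{2\pi i \lambda g}\, d(f_\omega \mu) = \int e^{2\pi i \lambda g(f_\omega(x))}\, d\mu(x)$ gives the claim.

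The main obstacle is the cardinality bound $|\mathcal{C}_b| = O(\log b)$ in part (1): the naive counting of multisets of contraction ratios only gives a polynomial-in-$|\log b|$ bound, and obtaining the linear bound requires either the arithmetic structure of the $\log|r_i|$ or, as the authors note at the end of Section \ref{Section Sketch}, it suffices to work with a partition by the relation $r_\omega = r_\eta$ where the count of equivalence classes is controlled — I would follow whichever route the paper's downstream arguments actually need, and I expect only the weaker bound is genuinely required, with ``$O(\log b)$'' being a convenient (correct, but needing the standard IFS lemma) statement. Everything else is a routine unwinding of definitions and the finite-state self-similarity of $\mu$.
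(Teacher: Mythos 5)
Your handling of the partition claim and of Part (2) is correct and coincides with what the paper's one-line proof intends: since every $|r_i|<1$, the map $\ell\mapsto|r_{\eta|_\ell}|$ decreases strictly to $0$ from the value $1$ at $\ell=0$, so each $\eta\in\mathcal{A}^{\mathbb{N}}$ has a unique prefix in $\mathcal{W}_b$; iterating \eqref{Eq self-sim} along this cut-set gives $\mu=\sum_{\omega\in\mathcal{W}_b}p_\omega f_\omega\mu$, and integrating $e^{2\pi i\lambda g(\cdot)}$ against both sides yields Part (2). Your reading of the right-hand side as $\sum_{\omega}p_\omega\int e^{2\pi i\lambda g(f_\omega(x))}\,d\mu(x)$ is the intended (if abusive) meaning of the notation.

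The one place you hedge --- the bound $|\mathcal{C}_b|=O(\log b)$ --- is exactly the place where the statement should not be taken at face value, and you should not plan to ``cite it as standard'': as written it is false in general once $n\ge 3$. Take contraction ratios $1/2,1/3,1/5$. By unique factorization the values $|r_\omega|=2^{-a}3^{-c}5^{-d}$ are pairwise distinct, and membership in $\mathcal{W}_b$ forces $b\,r_{\min}<|r_\omega|\le b$, i.e.\ it forces $a\log 2+c\log 3+d\log 5$ to lie in an interval of fixed length $\log 5$ around $\log(1/b)$; the number of such nonnegative integer triples (almost all of which are realized by some $\omega\in\mathcal{W}_b$, by placing the letter of ratio $1/5$ last) grows like $(\log(1/b))^2$. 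What your multiset count actually proves, namely $|\mathcal{C}_b|=O\bigl((\log(1/b))^{n-1}\bigr)$, is the correct general statement, and it is all the paper ever needs: in \eqref{Eq final} and in the last display of the proof of Theorem \ref{main-2}, the factor $|\mathcal{C}_b|$ multiplies quantities that decay polynomially in $\lambda$ (respectively like $a^{-c_0u_1}$ with $u_1\ge N^{1/2k}$), so replacing $\log(1/b)$ by any fixed power of it leaves every conclusion intact. So prove and use the polylogarithmic bound; the linear one is neither available in general nor required.
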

Indeed, Part (1) follows from a standard counting argument, whereas Part (2) follows from the stationarity of $\mu$ via Part (1).

Next, we discuss linearization. For any $\eta \in \mathcal{A}^*$ recall that we write 
$$f_\eta (x) =r_\eta\cdot x + t_\eta, \text { where } t_\eta := f_\eta (0).$$
We will require the following standard linear approximation lemma. Recall that we assuming $\mu$ is supported on $[0,1]$. 
\begin{Lemma} \label{Lemma lin}
Let $g\in C^{1+\alpha} ([0,1])$ where $\alpha>0$. Then for all $x \in [0,1]$ and $\eta \in \mathcal{A}^*$,
\begin{equation}
g\circ f_\eta (x) - g(t_\eta) - g'(t_\eta)\cdot r_\eta \cdot x = O_g\left( r_\eta ^{1+\alpha} \right).
\end{equation}
\end{Lemma}
\begin{proof}
There exists a point $y=y(x,\eta)\in [0,1]$ such that
$$g\circ f_\eta (x) = g(t_\eta) + g'(f_\eta(y))\cdot r_\eta \cdot x.$$
As $g' \in C^\alpha ([0,1])$ and $|x|,|y|\leq 1$, we thus have
$$\left| g\circ f_\eta (x) - g(t_\eta) - g'(t_\eta)\cdot r_\eta \cdot x  \right| = \left| g'  (f_\eta (y)) - g'(t_\eta)\right|\cdot |r_\eta|\cdot |x| \leq O_g\left( \left| f_\eta (y) - f_\eta (0) \right|^\alpha \right) \cdot |r_\eta| = O_g\left( r_\eta ^{1+\alpha} \right).$$
\end{proof}

\subsection{On Tsujii's large deviations estimate}
Recalling our discussion from Section \ref{Section Sketch}, let us first formulate Tsujii's original result from \cite{Tsujii2015self}. Writing $\mathcal{P}(X)$ for the family of Borel probability measures on the space $X$, and recalling that $\left| \cdot \right|$ stands for the Lebesgue measure on $\mathbb{R}$, we have:
\begin{theorem} \label{Tsuji original} \cite{Tsujii2015self}
Let $\mu \in \mathcal{P}(\mathbb{R})$ be a non-atomic self-similar measure. Then
\begin{equation*}
\lim_{c \rightarrow 0^+} \limsup_{t\rightarrow \infty} \frac{1}{t}\log  \left| \left\lbrace  \lambda\in  [-e^{t}, e^{t}]:\, \left| \widehat{\mu}(\lambda) \right|\geq e^{-c\cdot t} \right\rbrace \right|  =0.
\end{equation*}
\end{theorem}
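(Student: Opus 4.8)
The plan is to deduce Theorem \ref{Tsuji original} directly from the cited result of Tsujii \cite{Tsujii2015self}, but since the statement is essentially quoting Tsujii, what is really needed here is to explain how this large deviations estimate gets \emph{used} in the form \eqref{Erdos-Kahane} and \eqref{Erdos Kahane 2} that the sketch in Section \ref{Section Sketch} requires. So I would proceed in two movements. First, I would recall that Tsujii's theorem as stated controls the Lebesgue measure of the set of ``large'' frequencies $B_{c,t} := \{\lambda \in [-e^t, e^t] : |\widehat\mu(\lambda)| \ge e^{-ct}\}$: for every $\epsilon > 0$ there is $c = c(\epsilon) > 0$ and $t_0$ such that $|B_{c,t}| \le e^{\epsilon t}$ for all $t > t_0$. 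This is just an unwinding of the double-limit statement: the inner $\limsup$ being small for small $c$ means we can pick $c$ making $\frac 1t \log|B_{c,t}| < \epsilon$ eventually.

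The second movement converts this Lebesgue-measure bound into the \emph{covering} statement \eqref{Erdos-Kahane}, i.e.\ that $B_{c,t}$ meets at most $e^{\epsilon t}$ (or, after rescaling, $T^\epsilon$) unit intervals. The key observation here is a standard ``thickening'' argument exploiting the near-constancy of $\widehat\mu$ on unit-length windows: since $\widehat{\mu}$ is the Fourier transform of a compactly supported probability measure, $|\widehat\mu'(\lambda)| = |\widehat{2\pi i x\,\mu}(\lambda)| \le 2\pi \int |x|\, d\mu(x) = O(1)$ because $\operatorname{spt}\mu \subseteq [0,1]$. Hence $|\widehat\mu|$ varies by at most a fixed constant over any interval of length $1$, so if $|\widehat\mu(\lambda)| \ge e^{-ct}$ somewhere in $[n, n+1]$ then $|\widehat\mu| \ge \tfrac12 e^{-ct} \ge e^{-c't}$ on a fixed-proportion subinterval (for $t$ large, one can even arrange $|\widehat{\mu}| \ge e^{-c't}$ on a subinterval of length bounded below by a constant). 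Therefore the number of unit intervals $[n, n+1]$ that $B_{c,t}$ intersects is comparable to $|B_{c', t}|$ up to a constant factor, and Tsujii's estimate with a slightly smaller constant $c'$ gives the bound $e^{\epsilon t}$. Rescaling $t = \log T$ yields \eqref{Erdos-Kahane}, and the reformulation \eqref{Erdos Kahane 2} is immediate by setting $t \approx \log(\lambda b)$.

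I expect the main (and in fact only) obstacle to be cosmetic rather than substantive: the original statement in \cite{Tsujii2015self} may be phrased for a slightly different normalization (e.g.\ over $[1, e^t]$ rather than $[-e^t, e^t]$, or with $\widehat\mu$ normalized differently), so some care is needed to check that the symmetry $|\widehat\mu(-\lambda)| = |\widehat\mu(\lambda)|$ and the triviality of the bound near $\lambda = 0$ let us pass between these forms without loss. Once the Lebesgue-to-covering conversion is in place via the derivative bound on $\widehat\mu$, everything downstream in Section \ref{Section Sketch} uses only \eqref{Erdos Kahane 2}, so no further effort is required at this stage; the genuine work of the paper lies in the linearization step and the estimate \eqref{Eq sum}, not here.
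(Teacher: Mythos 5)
Your treatment of the statement itself is the same as the paper's: Theorem \ref{Tsuji original} is a quotation of Tsujii's large deviations estimate and is not reproved, and your subsequent conversion of the Lebesgue-measure bound into the unit-interval covering bound (Corollary \ref{Tsuji} / \eqref{Eq. Tsuji}) uses exactly the paper's mechanism, namely the uniform Lipschitz continuity of $\widehat{\mu}$ coming from compact support.

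There is, however, one quantitative claim in your second movement that is false and needs repair. You assert that if $|\widehat{\mu}(\lambda)|\geq e^{-ct}$ at some point of $[n,n+1]$, then $|\widehat{\mu}|\geq e^{-c't}$ on a ``fixed-proportion'' subinterval, indeed on a subinterval of length bounded below by a constant. With a Lipschitz constant $L=2\pi\int|x|\,d\mu=O(1)$, the guaranteed subinterval has length only of order $e^{-ct}/L$, which decays exponentially in $t$; no constant-length window is available (for instance, for Lebesgue measure on $[0,1]$ one has $\widehat{\mu}(\lambda)\to 0$, so a large value cannot persist on an interval of fixed length). Consequently your conclusion that the number of bad unit intervals is ``comparable to $|B_{c',t}|$ up to a constant factor'' is unjustified: the correct inequality is that this count, multiplied by $e^{-2ct}$ (the length of the thickened bad window), is at most the Lebesgue measure of the bad set at the relaxed threshold $\tfrac12 e^{-ct}$. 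This is precisely the inequality the paper writes in the proof of Corollary \ref{Tsuji}. The extra factor $e^{2ct}$ in the count is then harmless, but only because of the double-limit structure: after taking $\frac1t\log$ it contributes $2c$, which vanishes as $c\to 0^+$; and for the fixed-$\epsilon$ form \eqref{Eq. Tsuji} one simply chooses $c$ small enough that $2c$ is dominated by $\epsilon$. So your argument survives once the ``constant factor'' is replaced by $e^{2ct}$ and this loss is explicitly absorbed by the limit in $c$; as written, that step would fail.
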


In our argument we will require the following corollary of this result:
\begin{Corollary} \label{Tsuji} 
Let $\mu \in \mathcal{P}(\mathbb{R})$ be a non atomic self-similar measure. Then
\begin{equation*}
\lim_{c+0^+} \limsup_{t\rightarrow \infty} \frac{1}{t}\log \sharp \left\lbrace n\in \mathbb{Z}: \exists \lambda \in [n,n+1]\cap [-e^{t}, e^{t}] \text{ such that } \left| \widehat{\mu}(\lambda) \right|\geq e^{-c\cdot t} \right\rbrace  =0.
\end{equation*}
In particular, for every $\epsilon>0$ there exists $c_0>0$ such that for all $t$ large enough,
\begin{equation} \label{Eq. Tsuji}
\sharp \left\lbrace n\in \mathbb{Z}: \exists \lambda\in [n,n+1]\cap [-e^{t}, e^{t}] \text{ such that } \left| \widehat{\mu}(\lambda) \right|\geq e^{-c\cdot t} \right\rbrace  \leq e^{\epsilon \cdot t}.
\end{equation}
\end{Corollary}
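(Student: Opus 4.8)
The plan is to deduce Corollary \ref{Tsuji} from Theorem \ref{Tsuji original} by a simple covering/comparison argument between the Lebesgue measure of the super-level set $S_c(t) := \{\lambda \in [-e^t, e^t] : |\widehat{\mu}(\lambda)| \ge e^{-ct}\}$ and the number $N_c(t)$ of unit-integer-intervals $[n,n+1]$ that meet it. The key elementary observation is that $S_c(t) \subseteq \bigcup_{n \in \mathcal{I}_c(t)} [n, n+1]$, where $\mathcal{I}_c(t)$ is exactly the set being counted in the statement (augmented to include the integer $n$ to the left of each point, so that every $\lambda \in S_c(t)$ lies in some $[n,n+1]$ with $n \in \mathcal{I}_c(t)$). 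Hence the Lebesgue measure of the union of these intervals is at least $|S_c(t)|$, and since each interval has length $1$, we get $\sharp \mathcal{I}_c(t) \ge |S_c(t)|$. This is the wrong direction for our purposes, so the real content is the reverse: we need to bound $\sharp \mathcal{I}_c(t)$ from above by something controlled by a Lebesgue measure.

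First I would fix the reverse bound. The point is that if $[n,n+1] \cap S_c(t) \ne \emptyset$, i.e. there exists $\lambda_0 \in [n,n+1]$ with $|\widehat{\mu}(\lambda_0)| \ge e^{-ct}$, then by continuity (and mild uniform continuity estimates on $\widehat{\mu}$, which is Lipschitz on bounded sets with a constant depending only on the diameter of the support of $\mu$, hence on nothing since $\mu$ is supported in $[0,1]$) there is a whole subinterval around $\lambda_0$ of definite length $\ell_0 > 0$ on which $|\widehat{\mu}| \ge e^{-2ct}$, say — wait, that is not quite uniform in $t$. The cleaner route: $|\widehat{\mu}'| = |\widehat{x\mu}| \le 2\pi$ (since $\mu$ is supported in $[0,1]$, the first moment is at most $1$), so $|\widehat{\mu}(\lambda) - \widehat{\mu}(\lambda_0)| \le 2\pi |\lambda - \lambda_0|$. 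Thus on the interval $\{|\lambda - \lambda_0| \le e^{-ct}/(4\pi)\}$ we have $|\widehat{\mu}(\lambda)| \ge e^{-ct}/2 \ge e^{-2ct}$ for $t$ large. Therefore each $n \in \mathcal{I}_c(t)$ contributes a subinterval of length $\ge e^{-ct}/(4\pi)$ to $S_{2c}(t+1)$ (the shift from $t$ to $t+1$ just absorbs the possibility that $\lambda_0 \pm e^{-ct}/(4\pi)$ pokes slightly outside $[-e^t,e^t]$). These subintervals may overlap across at most two consecutive values of $n$, so $\sharp \mathcal{I}_c(t) \cdot \frac{e^{-ct}}{8\pi} \le |S_{2c}(t+1)|$, giving $\frac{1}{t}\log \sharp \mathcal{I}_c(t) \le c + \frac{1}{t}\log(8\pi) + \frac{t+1}{t}\cdot\frac{1}{t+1}\log|S_{2c}(t+1)| + o(1)$.

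Next I would take $\limsup_{t \to \infty}$ on both sides: the middle term vanishes, and the last term is $\le \limsup_{t\to\infty} \frac{1}{t}\log|S_{2c}(t)|$ (after the trivial reindexing $t+1 \leadsto t$). Then letting $c \to 0^+$ and invoking Theorem \ref{Tsuji original} — which says precisely $\lim_{c\to 0^+} \limsup_{t\to\infty} \frac{1}{t}\log|S_{2c}(t)| = 0$ (the factor $2$ in $2c$ is harmless since the outer limit is over $c \to 0^+$) — yields $\lim_{c\to 0^+}\limsup_{t\to\infty}\frac{1}{t}\log\sharp\mathcal{I}_c(t) = 0$, which is the displayed first assertion of Corollary \ref{Tsuji}. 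The "in particular" statement \eqref{Eq. Tsuji} is then immediate: unwinding the definition of the double limit being zero, for every $\epsilon > 0$ we may first choose $c_0 > 0$ small enough that $\limsup_{t\to\infty}\frac{1}{t}\log\sharp\mathcal{I}_{c_0}(t) < \epsilon$, and then $\sharp\mathcal{I}_{c_0}(t) \le e^{\epsilon t}$ for all $t$ sufficiently large; note $\mathcal{I}_c(t)$ is monotone non-decreasing in $c$, so any $c \le c_0$ works as well, matching the "$\exists c_0$" phrasing. (There is a minor typo in the statement, "$\lim_{c+0^+}$", which should read "$\lim_{c \to 0^+}$".)

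The main obstacle — really the only non-bookkeeping point — is securing the uniform-in-$t$ lower bound on the length of the interval around $\lambda_0$ on which $|\widehat{\mu}|$ stays comparably large; this is what converts a single "bad" frequency into a set of positive, quantifiably-large Lebesgue measure, so that the counting function is genuinely dominated by the Lebesgue measure of a (slightly fattened, slightly relaxed) super-level set. Using the uniform Lipschitz bound $\|\widehat{\mu}'\|_\infty \le 2\pi$ coming from $\mathrm{supp}(\mu) \subseteq [0,1]$ handles this cleanly; the only care needed is matching up the parameters ($c$ vs. $2c$, $t$ vs. $t+1$) so that the double limit in Theorem \ref{Tsuji original} can be applied verbatim, and all of this washes out in the iterated limit.
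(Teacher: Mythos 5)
Your proposal is correct and follows essentially the same route as the paper: both use the uniform Lipschitz continuity of $\widehat{\mu}$ (from compact support) to fatten each ``bad'' point into a subinterval of quantified length on which $|\widehat{\mu}|$ remains comparably large, compare the resulting count with the Lebesgue measure of a slightly relaxed super-level set, and then apply Theorem \ref{Tsuji original} after the iterated limit washes out the parameter shifts. Your treatment of overlaps and boundary effects is slightly more explicit than the paper's, but the argument is the same.
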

\begin{proof}
Since $\mu$ is a self-similar measure, it is Borel and compactly supported. It is well known that the Fourier transform of such a measure is uniformly Lipschitz continuous \cite[Section 3.2]{Mattila2015Fourier}.

 Thus, if for some $c>0,t>0$ and $\lambda\in \mathbb{R}$ we have
$$\left|\widehat{\mu}(\lambda) \right| \geq  e^{-c\cdot t},$$
then, assuming $t$ is very large (depending on $c$),  for every $s\in [\lambda-e^{-2ct},\, \lambda+e^{-2ct}]$ we have
$$\left|\widehat{\mu}(s) \right| \geq \frac{1}{2} \cdot e^{-c\cdot t}.$$
This in turn implies that for every $c>0$ and $t=t(c)>0$ large enough,
$$e^{-2ct} \sharp \left\lbrace n\in \mathbb{Z}: \exists \lambda\in [n,n+1]\cap [-e^{t}, e^{t}] \text{ such that } \left| \widehat{\mu}(\lambda) \right|\geq  e^{-c\cdot t} \right\rbrace $$
$$ \leq \left| \left\lbrace  \lambda \in  [-e^{t}, e^{t}]:\, \left| \widehat{\mu}(\lambda) \right|\geq \frac{1}{2}\cdot e^{-c\cdot t} \right\rbrace \right|.$$
The first assertion of the corollary is a direct consequence of Theorem \ref{Tsuji original} and the last displayed equation.  As \eqref{Eq. Tsuji} is a formal consequence of this result, the corollary is proved.
\end{proof}

\subsection{Non-affine continuously differentiable functions are non-flat} \label{Section flatness}
The following two lemmas demonstrate the non-flatness of non-linear real continuously differentiable functions. 
\begin{lemma}\label{non-flat-smooth}
        Let $J\subset\mathbb{R}$ be a closed interval and $h \in C^k(J)$ for some integer $k\geq 1$. If 
        \begin{equation} \label{Eq assumption non flat}
                \max_{1\leq j\leq k}|h^{(j)}(x)| \ge 1 \text { for all } x \in J,
        \end{equation}
 then there exists $\rho>0$ such that $h$ is $\frac{1}{2k}$-non-flat on all sub-intervals $I \subseteq J$ with $|I|\leq \rho$.
\end{lemma}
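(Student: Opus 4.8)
\textbf{Plan for the proof of Lemma \ref{non-flat-smooth}.}

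The plan is to prove the contrapositive statement at the level of a single interval: if $A \subseteq I$ is a bounded set whose image $h(A)$ has small diameter, say $\operatorname{diam}(h(A)) < 2^{-n}$, then $A$ cannot have too many $2^{-\delta n}$-balls in its covering number, where $\delta = \frac{1}{2k}$. The key structural input is that condition \eqref{Eq assumption non flat} forces $h$ to be, locally, a finite-to-one map with quantitative control: on a sufficiently small subinterval $I$, the set $\{x : |h'(x)| \text{ is small}\}$ is itself controlled because some higher derivative is bounded below, and we can iterate this reasoning. Concretely, I would first fix the scale $\rho$ so that on any subinterval $I$ of length $\le \rho$, by continuity of the $h^{(j)}$ and compactness, we have a clean dichotomy at each point and uniform bounds $|h^{(j)}(x)| \le M$ for $1 \le j \le k$ and all $x \in I$.

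The heart of the argument is a \emph{sublevel set / preimage estimate}: for an interval $I$ on which $\max_{1 \le j \le k}|h^{(j)}| \ge 1$ pointwise, and for any value $y \in \mathbb{R}$ and any $\eta > 0$, the set $h^{-1}([y - \eta, y + \eta]) \cap I$ can be covered by $O_k(1)$ intervals each of length $O_k(\eta^{1/k})$. This is a standard consequence of the classical fact (essentially the one-dimensional van der Corput / Vinogradov-type sublevel set lemma) that if $|h^{(k)}(x)| \ge c$ on an interval then $|\{x : |h(x)| \le \eta\}| \le C_k (\eta/c)^{1/k}$, combined with a finite decomposition of $I$ into subintervals on each of which a \emph{single} derivative order $j \le k$ is bounded below (such a decomposition exists with $O_k(1)$ pieces because the zero sets of the $h^{(j)}$ interlace: between consecutive zeros of $h^{(j)}$ the derivative $h^{(j+1)}$ must have a zero, etc., and $h^{(k)}$ being bounded away from $0$ — after shrinking — caps the total count; alternatively one argues directly by the sublevel lemma applied to each $h^{(j)}$ in turn). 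Actually, the cleanest route avoids counting zeros: after shrinking $\rho$, on $I$ we may apply the sublevel set bound to whichever derivative witnesses \eqref{Eq assumption non flat}, but since that witness varies with $x$, we instead partition $I$ into $O_k(1)$ maximal subintervals on each of which $|h^{(j)}| \ge \frac12$ for a fixed $j = j(I')$, using that the functions $\min_{1 \le i \le j}|h^{(i)}|$ have controlled oscillation.

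Granting the preimage estimate, the proof of non-flatness is immediate: suppose $A \subseteq I$ with $\operatorname{diam}(h(A)) < 2^{-n}$. Then $h(A) \subseteq [y-2^{-n}, y+2^{-n}]$ for some $y$, so $A \subseteq h^{-1}([y-2^{-n}, y+2^{-n}]) \cap I$, which by the estimate is covered by $O_k(1)$ intervals of length $O_k(2^{-n/k})$. Each such interval is covered by $O_k(1)$ balls of diameter $2^{-n/(2k)} = 2^{-\delta n}$ (since $2^{-n/k} \le 2^{-n/(2k)}$ for $n \ge 1$). Hence $\mathcal{N}_{2^{-\delta n}}(A) = O_k(1)$, so for every $\epsilon > 0$ and $n$ large enough (depending only on $\epsilon$ and $k$) we get $\mathcal{N}_{2^{-\delta n}}(A) < 2^{\epsilon n}$, which is the contrapositive of the non-flatness implication in Definition \ref{def:non-flat}. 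This holds uniformly over all subintervals $I \subseteq J$ with $|I| \le \rho$, as required.

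The main obstacle I anticipate is making the partition-into-$O_k(1)$-pieces step fully rigorous and genuinely uniform: one must ensure the number of pieces and the implied constants depend only on $k$ (and the fixed bounds $M$, $c_0=1$), not on the particular interval $I$ or the placement of the zero sets of the $h^{(j)}$. The trick is to first shrink $\rho$ using uniform continuity of $h^{(k)}$ on $J$ so that the analysis localizes, and then to run an induction on $k$: either $|h^{(1)}| \ge \frac12$ on a controlled portion of $I$ (where $h$ is a near-diffeomorphism and the preimage bound is trivial), or $|h^{(1)}| < \frac12$ somewhere, in which case $\max_{2 \le j \le k}|h^{(j)}| \ge \frac12$ on a neighborhood, letting us apply the inductive hypothesis to $h'$ on subintervals; assembling the pieces gives the $O_k(1)$ count. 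I would also double-check the exponent bookkeeping: the factor $\frac{1}{2k}$ rather than $\frac1k$ gives exactly the slack needed to absorb the $O_k(1)$ covering-number constant into $2^{\epsilon n}$ for large $n$, so the statement is tight as phrased.
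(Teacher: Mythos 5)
Your proposal is correct in substance but follows a genuinely different route from the paper. The paper argues by contradiction directly on the set $A$: from $\mathcal{N}_{2^{-n/(2k)}}(A)\ge 2^{\epsilon n}$ it extracts $2^{k+1}$ points of $A$ that are pairwise $2^{-n/(2k)}$-separated, and then runs $k$ rounds of divided differences (mean value theorem) to produce, for each $1\le j\le k$, a point $z_j$ with $|h^{(j)}(z_j)|\le 2^{-n}\cdot 2^{k-1}\cdot 2^{n/2}\le \tfrac12$; combined with the choice of $\rho$ forcing each $h^{(j)}$ to oscillate by at most $\tfrac14$ on $I$, this contradicts \eqref{Eq assumption non flat}. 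You instead prove the contrapositive via the classical one-dimensional sublevel-set estimate: $\operatorname{diam}(h(A))<2^{-n}$ puts $A$ inside $h^{-1}$ of an interval of length $2^{-n}$, which is covered by $O_k(1)$ intervals of length $O_k(2^{-n/k})$, whence $\mathcal{N}_{2^{-n/(2k)}}(A)=O_k(1)<2^{\epsilon n}$ for large $n$. Both work; the paper's argument is self-contained and avoids importing the sublevel lemma, while yours is shorter modulo that input and makes transparent why the exponent $\tfrac{1}{2k}$ (indeed anything up to $\tfrac1k$) suffices.

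The one soft spot you correctly flag --- reducing from the pointwise hypothesis $\max_{1\le j\le k}|h^{(j)}|\ge 1$ to a single derivative bounded below, so that the sublevel lemma applies --- is easier than your proposed $O_k(1)$-piece partition or induction on $k$. Since you have already chosen $\rho$ so that each $h^{(j)}$ varies by at most $\tfrac14$ on any interval of length $\le\rho$ (this is exactly the paper's condition \eqref{eq:uniform_continuity}), pick any $x_0\in I$ and a witness $j_0$ with $|h^{(j_0)}(x_0)|\ge 1$; then $|h^{(j_0)}|\ge \tfrac34$ on \emph{all} of $I$, so a single derivative order works on the entire subinterval, no partition is needed, and $\{x\in I: |h(x)-y|\le\eta\}$ is a union of at most $j_0$ intervals (by Rolle, $h$ has at most $j_0-1$ critical points in $I$) of total length $O_k(\eta^{1/j_0})$. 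With that observation your argument closes without the inductive machinery.
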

We remark that in \eqref{Eq assumption non flat} we can replace $1$ with any $c_0>0$.
\begin{proof}
    Since $h \in C^k(J)$ there exists $\rho > 0$ such that 
    \begin{equation} \label{eq:uniform_continuity}
        \max_{0 \le j \le k}\sup \left\lbrace |h^{(j)}(x) - h^{(j)}(y)| : x,y \in J,\, |x-y| \le \rho \right\rbrace \le \frac{1}{4}
    \end{equation}
    Let $I \subseteq J$ be any sub-interval with $|I| \leq \rho$. We claim that $h$ is $\frac{1}{2 k}$-non-flat on $I$.

Suppose that $h$ is not $\frac{1}{2 k}$-non-flat on $I$. We will show that this contradicts our assumption \eqref{Eq assumption non flat}. Now, if $h$ is not $\frac{1}{2 k}$-non-flat on $I$,   then there exist $\epsilon >0$, arbitrarily large $n \geq 1$, and $A \subseteq I$ satisfying
$$\mathcal{N}_{2^{-\frac{n}{2k}}}(A) \ge 2^{n \epsilon}, \quad \text { and } \diam(h(A)) < 2^{-n}.$$
In particular, for $n$ large enough (depending on $\epsilon$), there exist $2^{k+1}$ points $\{x_i\}_{ i=1}^{2^{k+1}} \subseteq A$ such that
$$x_1<x_2<...<x_{2^{k+1}},\quad \text{ and } \dist(x_i,\,x_j) \ge 2^{-\frac{n}{2 k}} \text{ for all } i \ne j.$$
We proceed to  construct $2^{k}=2^{k+1}/2$ points $\{y^{(1)}_i\}_{i=0}^{2^{k}-1}$ as follows. By  the previous two displayed equations, for each $0 \le i \le 2^k-1$ there exists $y^{(1)}_{2i} \in [x_{2i+1}, x_{2i+2}]$ such that 
    \[
        |h'(y^{(1)}_{2i})| = \left| \frac{h(x_{2i+1}) - h(x_{2i+2})}{x_{2i+1} - x_{2i+2}} \right| \le 2^{-n} \cdot 2^{\frac{n}{2k}}.
    \]
   Note that the gaps between the (ordered) $2^k$ points $\{y^{(1)}_{2i}\}_{i=0}^{2^k-1}$ are at least $2^{-\frac{n}{2 k}}$. We next  construct $2^{k-1}=2^{k}/2$ points $\{y^{(2)}_i\}_{i=0}^{2^{k-1}-1}$ as follows:  For each $0 \le i \le 2^{k-1}-1$, there exists $y^{(2)}_{4i} \in [y^{(1)}_{4i}, y^{(1)}_{4i+2}]$ such that 
    \[
        |h^{(2)}(y^{(2)}_{4i})| = \left| \frac{h'(y^{(1)}_{4i+2}) - h'(y^{(1)}_{4i+4})}{y^{(1)}_{4i+2} - y^{(1)}_{4i+4}} \right| \le 2^{-n} \cdot 2 \cdot (2^{\frac{n}{2k}})^2.
    \]
    Note the use of the triangle inequality and the previous construction. 
    Continuing this process $k-2$ more times, we can
  find $k$ points $z_1, z_2, \cdots, z_k$ such that for every $1 \leq j \leq k$
    \[
        |h^{(j)}(z_j)| \le 2^{-n} \cdot 2^{k-1} \cdot \left(2^{\frac{n}{2k}}\right)^k \le \frac{1}{2}.
    \]
    In view of \eqref{eq:uniform_continuity}, it follows that 
    \[
    \max_{1 \le j \le k}\sup_{x \in I }|h^{(j)}(x)| \le \max_{1 \le j \le k} \left[|h^{(j)}(z_j)| + \sup_{x \in I }|h^{(j)}(x) - h^{(j)}(z_j)|\right] \le \frac{3}{4} < 1.
    \]
    This is a contradiction to our assumption \eqref{Eq assumption non flat}.  We thus conclude that  $h$ is not $\frac{1}{2 k}$-non-flat on $I$.
\end{proof}

The following lemma is a direct consequence of the previous one:
\begin{lemma}\label{non-flat-analytic}
        Let $g \in C^{\omega}(J)$ be a non-affine real analytic function on a compact interval $J\subseteq \mathbb{R}$. Then  there exist $c> 0$ and an integer $k\geq 2$ such that for all $x\in J$,
        \[
        \max_{2 \le j \le k} |g^{(k)}(x)| \ge c.
        \]
        In particular, there exists $\rho>0$ such that $g\cdot \frac{1}{c}$ is $\frac{1}{2k}$-non-flat on all sub-intervals $I \subseteq J$ with $|I|\leq \rho$.
\end{lemma}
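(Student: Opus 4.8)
The plan is to deduce Lemma~\ref{non-flat-analytic} from Lemma~\ref{non-flat-smooth}, which already handles the case of a $C^k$ function whose first $k$ derivatives do not simultaneously vanish (even with the lower bound $1$ replaced by any $c_0>0$, as remarked after that lemma). So the only genuine work is to establish the stated inequality: for a non-affine $g\in C^\omega(J)$ on a compact interval $J$, there exist $c>0$ and an integer $k\geq 2$ such that $\max_{2\le j\le k}|g^{(j)}(x)|\ge c$ for all $x\in J$. (Note that the displayed formula in the statement writes $g^{(k)}$ where $g^{(j)}$ is surely intended; I will prove the $\max_{2\le j\le k}|g^{(j)}|$ version, which is what is actually used.)

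First I would argue that for each fixed $x_0\in J$ there is \emph{some} integer $j=j(x_0)\ge 2$ with $g^{(j)}(x_0)\ne 0$. Indeed, if $g^{(j)}(x_0)=0$ for all $j\ge 2$, then by real analyticity the Taylor series of $g$ at $x_0$ is $g(x_0)+g'(x_0)(x-x_0)$, so $g$ agrees with an affine function on a neighbourhood of $x_0$; since $J$ is connected and $g$ is analytic, the identity theorem forces $g$ to be affine on all of $J$, contradicting the hypothesis. Hence the set $J_j:=\{x\in J:\ g^{(j)}(x)\ne 0\}$ for $j\ge 2$ cover $J$. Each $J_j$ is open in $J$ (continuity of $g^{(j)}$), so by compactness finitely many of them cover $J$, say $J=J_{j_1}\cup\cdots\cup J_{j_m}$; take $k:=\max_i j_i$, and then for every $x\in J$ we have $g^{(j)}(x)\ne0$ for at least one $j$ with $2\le j\le k$, i.e. $\max_{2\le j\le k}|g^{(j)}(x)|>0$.

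Next I would upgrade this pointwise positivity to a uniform lower bound $c>0$. The function $x\mapsto \Psi(x):=\max_{2\le j\le k}|g^{(j)}(x)|$ is continuous on the compact set $J$ and, by the previous step, strictly positive there; hence it attains a positive minimum $c:=\min_{x\in J}\Psi(x)>0$. This gives $\max_{2\le j\le k}|g^{(j)}(x)|\ge c$ for all $x\in J$, proving the first assertion.

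For the "in particular" clause, set $h:=\frac1c\, g$. Then $h\in C^k(J)$ and $\max_{2\le j\le k}|h^{(j)}(x)|\ge 1$ for all $x\in J$; a fortiori $\max_{1\le j\le k}|h^{(j)}(x)|\ge 1$, so hypothesis \eqref{Eq assumption non flat} of Lemma~\ref{non-flat-smooth} holds for $h$ with $k$ as above. Applying that lemma yields $\rho>0$ such that $h=\frac1c g$ is $\frac{1}{2k}$-non-flat on every sub-interval $I\subseteq J$ with $|I|\le\rho$, which is exactly the claim. I do not anticipate a serious obstacle here: the only subtlety is the appeal to the identity theorem for real-analytic functions on a connected interval to pass from "$g$ is locally affine near $x_0$" to "$g$ is globally affine," and the routine compactness argument to make both the choice of $k$ and the lower bound $c$ uniform.
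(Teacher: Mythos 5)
Your proof is correct and follows essentially the same route as the paper's: both arguments rest on the identity theorem (analyticity forces some derivative of order $\ge 2$ to be nonzero at each point, else $g$ would be affine) combined with compactness of $J$ to make the order $k$ and the lower bound $c$ uniform, followed by an appeal to Lemma~\ref{non-flat-smooth}. The only difference is cosmetic — you use an open-cover argument and the extreme value theorem where the paper argues by contradiction with a sequence $(x_n)$ satisfying $\max_{2\le j\le n}|g^{(j)}(x_n)|\le 1/n$ and passes to an accumulation point.
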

\begin{proof}
    Suppose, towards a contradiction, that there exists a sequence of points $(x_n)_{n\geq 1}$ in $J$ such that $\max_{2 \le j \le n} |g^{(j)}(x_n)| \le \frac{1}{n}$ as $n\rightarrow \infty$. By compactness, it has an accumulation point $x \in J$. Now the smoothness of $g$ implies that $g^{(j)}(x) = 0$ for all $j \ge 2$. It thus follows that $g(y) = g(x) + g'(x) (y-x)$ on $J$, which is an affine function.
\end{proof}

\section{Proof of Theorem \ref{thm:van_der_Corput_C1+a}}
In this section we prove Theorem \ref{thm:van_der_Corput_C1+a}. 
Let $\mu=\mu_\mathbf{p}$ be our self-similar measure. We retain the assumption, without the loss of generality, that $\mu$ is supported on $[0,1]$, and  the other notations and definitions as in the  preceding sections, and specifically in Sections \ref{Section pre} and \ref{Section Sketch}. Let $g\in C^{1+\alpha}([0,1])$ be such that $g'$ is a $\delta$-non-flat function on $[0,1]$, and recall that  $0<\delta\leq 1$.

 Let $\lambda>0$. We aim to prove the existence of constants $\tau=\tau(\mu,\delta) > 0$ and $C=C(g)>0$  such that 
        \[
        \left| \int e^{2 \pi i g(x) \lambda} d \mu(x) \right| \le C  \lambda  ^{-\tau}.
        \] 
We will often work with big $O$-notation, and moreover sometimes even omit that for notational clarity, under the assumption that $\lambda\gg 1$. In particular, suitable constants $C$ can be  worked out from our argument.

We fix a parameter $b=b(\lambda)>0$ to be chosen later. The proof consists of 4 steps: 
$$ $$

\noindent{\textbf{Step 1: Linearization.}} 
Recall the definition of the cut-set \eqref{Def W k},
\begin{equation*} 
\mathcal{W}_b  = \lbrace \omega \in \mathcal{A}^*: |r_\omega| \leq b < |r_{\omega^{-}}| \rbrace.
\end{equation*}
Then we have:
\begin{eqnarray} 
\notag \left| \widehat{ g \mu} \left( \lambda \right) \right| &=& \left|  \sum_{\omega \in \mathcal{W}_b} p_\omega\cdot \widehat{\mu} \left( g \left( f_\omega (x) \right) \lambda \right) \right| \\
\notag &\leq & \left|  \sum_{\omega \in \mathcal{W}_b} p_\omega\cdot \left( \widehat{\mu} \left( g(t_\omega) + g'(t_\omega)\cdot r_\omega\cdot   \lambda \right)+O\left( r_\omega ^{1+\alpha}\cdot \lambda \right) \right)  \right| \\
\label{Eq first bound} &\leq&   \sum_{\omega \in \mathcal{W}_b} p_\omega\cdot \left| \widehat{\mu} \left( g'(t_\omega)\cdot r_\omega \cdot   \lambda \right) \right|+O\left( b ^{1+\alpha}\cdot \lambda \right).
\end{eqnarray}
Indeed, the first  equality is Lemma \ref{Lemma basic} Part (2), and the second inequality follows from Lemma \ref{Lemma lin} since $e^{2\pi i \lambda x}$ is a $2\pi \lambda$-Lipschitz function. The last one follows from the triangle inequality, and using that by definition $r_{\omega} = \Theta \left( b \right)$ for all $\omega \in \mathcal{W}_b $, where $\Theta \left(\cdot\right)$ is the standard big Theta notation.

$$ $$
\noindent{\textbf{Step 2: Applying Tsujii's theorem and the non-flatness assumption.}} Recall  definition  \eqref{Def ck}. Fix $r\in \mathcal{C}_b$ and define
\begin{equation} \label{Eq c k r}
\mathcal{W}_{b,r} = \lbrace \omega \in \mathcal{W}_b:\, r_{\omega} = r \rbrace.
\end{equation}
Recalling that we are free to choose $b=b(\lambda)$, we add the assumptions that as $b\rightarrow 0$, for all $\omega \in \mathcal{W}_{b}$,
$$|\lambda \cdot r_{\omega}| \rightarrow \infty \text{ and } |\lambda \cdot r _\omega ^{1+\alpha}| \rightarrow 0.$$
Note that this must hold uniformly in $\omega \in \mathcal{W}_b$ by definition.  In particular, for every $r\in \mathcal{C}_b$, we may assume $|\lambda\cdot r|$ is arbitrarily large.

Next, fix some $\epsilon>0$ to be determined later. Since $g\in C^{1+\alpha} ([0,1])$, there exists some $C>1$ such that
$$ \left|g'(x)\right|<C\, \text{ for all } x\in [0,1].$$ 
So, applying the Corollary \eqref{Eq. Tsuji} of Tsujii's theorem (Theorem \ref{Tsuji original}) with this $\epsilon$, there exists some $c_0=c_0(\epsilon)>0$ such that for all $\lambda$ large enough, putting $t=\log \left|\lambda\cdot r\cdot C\right|$ such that it grows to $\infty$ with $\lambda$, we have
\begin{equation} \label{Eq application of Tsuji}
\sharp \left\lbrace n\in \mathbb{Z}: \exists \omega \in \mathcal{W}_{b,r} \text{ with } \lambda\cdot g'(t_{\omega})\cdot r \in [n,n+1] \text{ such that } \left| \widehat{\mu} \left( \lambda\cdot g'(t_{\omega})\cdot r\right) \right|\geq \left| \lambda \cdot r\cdot C  \right| ^{-c_0} \right\rbrace | \leq \left| \lambda \cdot C \cdot r \right| ^\epsilon.
\end{equation}

The next Lemma is the only place in the proof where the $\delta$-non-flatness assumption is used. Recall that $s_0>0$ is the Frostman exponent of the measure as in Proposition \ref{Lemma Feng}. Also, to streamline notation, let $\mathbb{P} = \mathbf{p}^\mathbb{N}$ be the Bernoulli on $\mathcal{A}^\mathbb{N}$. Let $\pi:\mathcal{A}^\mathbb{N}\rightarrow K$ be the coding map $\pi(\eta)=\lim_{n\rightarrow \infty} f_{\eta|_n} (0)$, where $\eta|_n\in \mathcal{A}^n$ is the $n$-prefix of $\eta$. It is standard that $\pi \left(\mathcal{A}^\mathbb{N}\right)=K$ and that $\pi \mathbb{P}=\mu$. Finally, for $\omega \in \mathcal{A}^*$ recall that  $[\omega]=\lbrace \eta\in \mathcal{A}^\mathbb{N}: \, \eta|_{|\omega|} = \omega\rbrace$.
\begin{Lemma} \label{Lemma prob of bad}
For every fixed $n\in \mathbb{Z}$,
$$\mathbb{P}\left( [\omega]:\, \omega\in \mathcal{W}_{k,r},\lambda\cdot g'(t_{\omega})\cdot r \in [n,n+1] \right) = O\left( (r\cdot  \lambda)^{\frac{\delta\cdot s_0}{2}} \cdot \left( (r\cdot\lambda)^{-\delta}+2r \right)^{s_0} \right).$$
\end{Lemma}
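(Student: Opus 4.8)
The plan is to fix $n \in \mathbb{Z}$ and estimate the $\mathbb{P}$-mass of the set
\[
E_n := \left\{ \omega \in \mathcal{W}_{b,r} :\, \lambda \cdot g'(t_\omega) \cdot r \in [n, n+1] \right\},
\]
using that the condition $\lambda r g'(t_\omega) \in [n, n+1]$ forces $g'(t_\omega)$ to lie in an interval of length $\frac{1}{|\lambda r|}$, hence forces $t_\omega$ to lie in the preimage $A := (g')^{-1}\left(I_n\right) \cap [0,1]$ of that short interval $I_n$. Since all $\omega \in \mathcal{W}_{b,r}$ have the same contraction ratio $r$ and $t_\omega = f_\omega(0)$, the cylinders $[\omega]$ with $\omega \in E_n$ are pairwise disjoint and each pushes forward to a scaled copy of $\mu$ sitting inside the ball $B(t_\omega, |r|)$ around $t_\omega$; more precisely $\pi[\omega] = f_\omega(K) \subseteq B(t_\omega, |r|)$. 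The key point is that $\mathbb{P}(E_n) = \sum_{\omega \in E_n} p_\omega = \mu\!\left(\bigcup_{\omega \in E_n} f_\omega(K)\right)$, and this union is contained in the $|r|$-neighborhood of $\{t_\omega : \omega \in E_n\} \subseteq A$, i.e., in $A^{(|r|)} := \{x : \dist(x, A) \le |r|\}$. So it suffices to bound $\mu\left(A^{(|r|)}\right)$.

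Next I would invoke the $\delta$-non-flatness of $g'$ to control $\mathcal{N}_{2^{-\delta m}}(A)$ for an appropriate scale $m$. Writing $2^{-m} \approx \frac{1}{|\lambda r|}$ (so $m \approx \log_2 |\lambda r|$), the set $A = (g')^{-1}(I_n) \cap [0,1]$ satisfies $\diam(g'(A)) \le |I_n| = \frac{1}{|\lambda r|} \approx 2^{-m}$. If $\mathcal{N}_{2^{-\delta m}}(A) \ge 2^{\epsilon' m}$ for every $\epsilon' > 0$ and $m$ large, then $\delta$-non-flatness of $g'$ would give $\diam(g'(A)) \ge 2^{-m}$ up to constants — which is consistent, so this does not immediately contradict anything; instead the right way to use Definition \ref{def:non-flat} is in its contrapositive form. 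The contrapositive says: for any $\epsilon' > 0$, for $m$ large, if $\diam(g'(A)) < 2^{-m}$ then $\mathcal{N}_{2^{-\delta m}}(A) < 2^{\epsilon' m}$. Since $\diam(g'(A)) \le \frac{1}{|\lambda r|} \lesssim 2^{-m}$ for $m = \lfloor \log_2|\lambda r| \rfloor - O(1)$, we conclude $\mathcal{N}_{2^{-\delta m}}(A) \le 2^{\epsilon' m} \approx (|\lambda r|)^{\epsilon'}$. Hence $A$ can be covered by roughly $(|\lambda r|)^{\epsilon'}$ intervals of length $2^{-\delta m} \approx (|\lambda r|)^{-\delta}$, and therefore $A^{(|r|)}$ can be covered by that many intervals of length $(|\lambda r|)^{-\delta} + 2|r|$.

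Then I would apply Feng--Lau (Proposition \ref{Lemma Feng}): each of the $O\!\left((|\lambda r|)^{\epsilon'}\right)$ covering intervals has $\mu$-mass $O\!\left(\left((|\lambda r|)^{-\delta} + 2|r|\right)^{s_0}\right)$, so
\[
\mathbb{P}(E_n) \le \mu\!\left(A^{(|r|)}\right) = O\!\left( (|\lambda r|)^{\epsilon'} \cdot \left( (|\lambda r|)^{-\delta} + 2|r| \right)^{s_0} \right).
\]
Choosing $\epsilon' = \frac{\delta s_0}{2}$ (which is legitimate since $\epsilon'$ is free and $s_0 = s_0(\mu)$, $\delta$ are fixed), this is exactly the claimed bound $O\!\left( (r\lambda)^{\delta s_0 / 2} \cdot \left( (r\lambda)^{-\delta} + 2r \right)^{s_0} \right)$, once one notes $(|\lambda r|)^{\epsilon'} = (r\lambda)^{\delta s_0/2}$ (with $r, \lambda > 0$ the sign issues disappear; in general one works with $|r|$).

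The main obstacle I anticipate is bookkeeping the scales: one must check that the scale $m \approx \log_2|\lambda r|$ is indeed large enough (this is where the earlier assumption ``$|\lambda \cdot r_\omega| \to \infty$'' enters, guaranteeing $m > m_0(\epsilon')$ for $\lambda$ large), and that replacing a covering of $A$ by intervals of length $(|\lambda r|)^{-\delta}$ with a covering of the $|r|$-neighborhood $A^{(|r|)}$ only inflates the interval length to $(|\lambda r|)^{-\delta} + 2|r|$ without increasing the \emph{number} of intervals — which is clear since thickening each interval by $|r|$ on each side keeps the same cardinality. A secondary subtlety is that $g'$ need not be injective, so $A = (g')^{-1}(I_n) \cap [0,1]$ could a priori be spread over several subintervals of $[0,1]$; but Definition \ref{def:non-flat} is stated for arbitrary bounded subsets $A \subseteq J$, so no injectivity is needed and the argument applies verbatim.
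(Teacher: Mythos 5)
Your proposal is correct and follows essentially the same route as the paper: use the contrapositive of $\delta$-non-flatness (with exponent $\delta s_0/2$) to cover $A=(g')^{-1}(I_n)\cap\{t_\omega\}$ by at most $(r\lambda)^{\delta s_0/2}$ intervals of length $(r\lambda)^{-\delta}$, thicken by $r$ to capture the cylinders $f_\omega(K)$, and then apply the Feng--Lau Frostman bound together with a union bound over the covering intervals. The only cosmetic difference is that you state explicitly the contrapositive reading of Definition \ref{def:non-flat} and the choice $\epsilon'=\delta s_0/2$, which the paper leaves implicit.
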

\begin{proof}
Let 
$$A:= \lbrace t_\omega:\, \omega\in \mathcal{W}_{k,r},\, \lambda\cdot g'(t_{\omega})\cdot r \in [n,n+1]\rbrace = \left( g' \right)^{-1} \left(  \left[\frac{n}{r\cdot  \lambda},\, \frac{n+1}{r\cdot \lambda}\right] \right)\bigcap \left \lbrace t_\omega : \, \omega \in  \mathcal{W}_{b,r} \right \rbrace  .$$
Then $\diam \left( g'(A) \right) \leq \frac{1}{r\cdot \lambda}$. Using the exponent $\epsilon = \frac{s_0}{2}$, we deduce from the $\delta$-non-flatness of $g'$ that 
$$\mathcal{N}_{(r\cdot\lambda)^{-\delta}} \left( A \right) <(r\cdot  \lambda)^{\frac{\delta\cdot s_0}{2}}.$$
Note that here we are assuming $\lambda$ is sufficiently large. Recalling that $\pi:\mathcal{A}^\mathbb{N}\rightarrow K$ is the coding map, for every $\omega \in \mathcal{W}_b$ and $\eta \in [\omega]$ we have
$$\left| \pi(\eta) - t_\omega \right| = \left| \pi(\eta) - f_\omega(0) \right| \leq r.$$
So, if $t_\omega \in I$, where $I$ is an interval of length $(r\cdot\lambda)^{-\delta}$, then $f_\omega ([0,1])$ belongs to an interval of size $(r\cdot\lambda)^{-\delta}+2r$. Thus, combining with our previous estimates,
$$\mathcal{N}_{(r\cdot\lambda)^{-\delta}+2r} \left( \bigcup_{\omega\in \mathcal{W}_{k,r}:\,\, \lambda\cdot g'(t_{\omega})\cdot r \in [n,n+1]} f_\omega([0,1]) \right) <(r\cdot  \lambda)^{\frac{\delta\cdot s_0}{2}}.$$
Applying Proposition \ref{Lemma Feng} and Boole's inequality, we obtain
$$\mathbb{P}\left( [\omega]:\, \omega\in \mathcal{W}_{k,r},\lambda\cdot g'(t_{\omega})\cdot r \in [n,n+1] \right) = O\left( (r\cdot  \lambda)^{\frac{\delta\cdot s_0}{2}} \cdot \left( (r\cdot\lambda)^{-\delta}+2r \right)^{s_0} \right).$$
This proves the lemma.
\end{proof}

$$ $$

\noindent{\textbf{Step 3: Collecting the error terms.}} First, recalling \eqref{Eq c k r} and the terms $c_0,C>0$ from \eqref{Eq application of Tsuji},  we partition $\mathcal{W}_{k,r}$ into two subsets:
\begin{itemize}
\item $\mathcal{W}_{b,r,1}$ is the collection of all $\omega \in \mathcal{W}_{b,r} $ such that
$$ \left| \widehat{\mu} \left( g'(t_\omega)\cdot r_\omega \cdot \lambda   \right) \right|\geq \left| \lambda \cdot r\cdot C \right|^{-c_0}.$$

\item $\mathcal{W}_{b,r,2}$ is the collection of all the other $\omega \in \mathcal{W}_{k,r}$, that is,  
$$\mathcal{W}_{b,r,2} = \mathcal{W}_{b,r} \setminus \mathcal{W}_{b,r,1}.$$ 
\end{itemize}
Now, for every $r\in \mathcal{C}_{k}$, applying \eqref{Eq application of Tsuji} and  Lemma \ref{Lemma prob of bad} with the notation $\mathbb{P}$ as in that lemma yields\begin{eqnarray*}
\sum_{\omega \in \mathcal{W}_{b,r}} p_\omega\cdot \left| \widehat{\mu} \left( g'(t_\omega)\cdot r \cdot   \lambda \right) \right| &= & \sum_{\omega \in \mathcal{W}_{b,r,1}} p_\omega\cdot \left| \widehat{\mu} \left( g'(t_\omega)\cdot r \cdot   \lambda \right) \right|+ \sum_{\omega \in \mathcal{W}_{b,r,2}} p_\omega\cdot \left| \widehat{\mu} \left( g'(t_\omega)\cdot r \cdot   \lambda \right) \right| \\
&\leq & \sum_{n\in \mathbb{Z}} \mathbb{P} \left(  [\omega]:\, \omega \in \mathcal{W}_{b,r,1} \text{ with } \lambda\cdot t_{\omega}\cdot r\in [n,n+1] \right)+\\
& & \mathbb{P} \left(  [\omega]:\, \omega \in \mathcal{W}_{b,r,2} \right)\cdot \left| \lambda \cdot r\cdot C \right|^{-c_0} \\
&\leq &  \left|\lambda \cdot r\cdot C \right|^\epsilon \cdot O\left( (r\cdot  \lambda)^{\frac{\delta\cdot s_0}{2}} \cdot \left( (r\cdot\lambda)^{-\delta}+2r \right)^{s_0} \right)+ \left| \lambda \cdot r\cdot C \right|^{-c_0}.
\end{eqnarray*}

Combining this with \eqref{Eq first bound} and Lemma \ref{Lemma basic} Part (1), noting that $r = \Theta \left( b \right)$ and omitting big-$O$ notations, we obtain
\begin{eqnarray}
\notag \left| \widehat{ g \mu} \left( \lambda \right) \right|  &\leq&   \sum_{\omega \in \mathcal{W}_b} p_\omega\cdot \left| \widehat{\mu} \left( g'(t_\omega)\cdot r_\omega \cdot   \lambda \right) \right|+\left|b ^{1+\alpha}\cdot \lambda \right| \\
\notag  &=& \sum_{r \in \mathcal{C}_b} \sum_{\omega \in \mathcal{W}_{b,r}} p_\omega\cdot \left| \widehat{\mu} \left( g'(t_\omega)\cdot r \cdot   \lambda \right) \right| +\left|b ^{1+\alpha}\cdot \lambda \right| \\
\notag  &\leq & \left| \mathcal{C}_b  \right| \cdot \left( \left|\lambda \cdot b\cdot C \right|^\epsilon \cdot \left| (b\cdot  \lambda)^{\frac{\delta\cdot s_0}{2}} \cdot \left( (b\cdot\lambda)^{-\delta}+2b \right|^{s_0} \right) +  \left|\lambda \cdot b\cdot C \right|^{-c_0} \right) + \left|b ^{1+\alpha}\cdot \lambda \right| \\
\label{Eq final}&\leq &   \log (b)\ \cdot \left( \left|\lambda \cdot b\cdot C \right|^\epsilon \cdot \left| (b\cdot  \lambda)^{\frac{\delta\cdot s_0}{2}} \cdot \left( (b\cdot\lambda)^{-\delta}+2b \right)^{s_0} \right) +  \left|\lambda \cdot b\cdot C \right|^{-c_0} \right) +\left|b ^{1+\alpha}\cdot \lambda \right|. 
\end{eqnarray}
$$ $$

\noindent{\textbf{Step 4: Choice of parameters and conclusion of the proof.}} Recall that we were  given some $\lambda$, and that we may assume $\lambda \gg 1$. We are now in a position to determine $b$ and $\epsilon$; As for the other parameters in \eqref{Eq final}, note that $c_0 = c_0 (\epsilon)$, $s_0=s_0(\mu)$, and that $C=C(g)$ is a global constant (independent of $\lambda$). We also  recall that before Lemma \ref{Lemma prob of bad} we added the conditions that as $b\rightarrow 0$,  uniformly for $\omega \in \mathcal{W}_{b}$,
$$|\lambda \cdot r_{\omega}| \rightarrow \infty \text{ and } |\lambda \cdot r _\omega ^{1+\alpha}| \rightarrow 0,$$
which is the same as asking that, as $r_{\omega} = \Theta \left( b \right)$,
$$|b\cdot \lambda| \rightarrow \infty \text{ and } |b^{1+\alpha}\cdot \lambda| \rightarrow 0  \text{ as } \lambda\rightarrow \infty.$$

Recalling that $\alpha>0$ by our assumption, find any $\gamma$ such that
$$ \frac{1}{1+\alpha}<\gamma<1.$$
We pick 
$$b=\lambda^{ -\gamma},\quad \epsilon = \frac{\delta\cdot s_0}{4}.$$
Note that, as $0<\gamma,\delta,s_0\leq 1$,
$$\epsilon(1-\gamma)+\frac{\delta\cdot s_0}{2}(1-\gamma)-s_0\cdot \gamma\leq \frac{3s_0}{4}(1-\gamma)-s_0\cdot \gamma <1.$$
Thus,  omitting global multiplicative constants (in particular, the big-$O$ notation,  absolute values, and multiplicative constants), via \eqref{Eq final} we have
\begin{eqnarray*}
\notag \left| \widehat{ g \mu} \left( \lambda \right) \right| &\leq& \log (\lambda) \cdot \left( \left|\lambda \cdot b \right|^\epsilon \cdot \left( (b\cdot  \lambda)^{\frac{\delta\cdot s_0}{2}} \cdot \left( (b\cdot\lambda)^{-\delta\cdot s_0}+b^{s_0} \right) \right) +  \left|\lambda \cdot b \right|^{-c_0} \right) +\left|b ^{1+\alpha}\cdot \lambda \right| \\
&\leq &\log (\lambda) \cdot  \left( \lambda^{-(1-\gamma)\cdot \frac{s_0 \cdot \delta}{4} } +\lambda^{\epsilon(1-\gamma)+\frac{\delta\cdot s_0}{2}(1-\gamma)-s_0\cdot \gamma}   +  \lambda^{-c_0\cdot (1-\gamma)} \right)  +\lambda^{1-(1+\alpha)\cdot \gamma}. 
\end{eqnarray*}
This proves the existence of $\tau$ as in Theorem \ref{thm:van_der_Corput_Ck}. As a constant $C$ can be worked out from this argument by re-considering the implicit constants in the omitted big-$O$ notations, Theorem \ref{thm:van_der_Corput_Ck}  is proved.

\section{Proof of Theorem \ref{Main Theorem pos}}
In this section we prove Theorem \ref{Main Theorem pos}. The general idea is to prove a quantitative van der Corput type estimate for a family of polynomials that arise naturally when considering the $k$-level correlation sums \eqref{correlation}, and the Poisson summation formula. Once this theorem is established, we will derive Theorem \ref{Main Theorem pos} from it via standard arguments.

Let us state the main technical theorem of this section:
\begin{theorem}\label{main-2}
Let $\mu \in \mathcal{P}([a,b])$ be a non-atomic self-similar measure, where $1<a<b$. Let $k\geq 2$ be an integer, $N\gg 1$ and $0<\epsilon\ll 1$, and let  $g\in \mathbb{Z}[x]$ be a polynomial with integer coefficients of the form
\begin{equation} \label{polynomial}
    g(x)=\sum_{j=1}^{k-1}l_j (x^{u_j}-x^{u_{j+1}})+\sum_{j=1}^{k-1}m_j (x^{v_j}-x^{v_{j+1}}),\, 
\end{equation}
$\text{ where } |l_j|, |m_j|\in(0, N^{1+\epsilon}], 1\leq u_k<\ldots<u_1\leq N, 1\leq v_k<\ldots<v_1\leq N.$

Suppose that either $u_1>v_1$, or 
$u_1=v_1$ but $l_1+m_1\neq 0$. If $u_1=\deg(g)\geq N^{\frac{1}{2k}}$, then there exists $\tau>0$ such that
\begin{equation}\label{eq:main-thm-estimate-1}
\left|\int e^{2\pi i g(x) }d\mu(x) \right|\le a^{-N^{\tau}},
\end{equation}
where $\tau$ is independent of $g$.
\end{theorem}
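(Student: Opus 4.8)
**The plan is to adapt the four-step scheme of Theorem \ref{thm:van_der_Corput_C1+a}, but replacing the fixed phase $g$ by the family of polynomials \eqref{polynomial} and tracking all dependencies on $N$.** The auxiliary parameter $b = b(\lambda)$ from the $C^{1+\alpha}$ proof must now be a power of $N$ chosen relative to the "effective frequency'' $\lambda \asymp \deg(g) = u_1$, which by hypothesis satisfies $N^{1/(2k)} \le u_1 \le N$. So first I would set $\lambda := u_1$ and use the self-similarity identity (Lemma \ref{Lemma basic} Part (2)) to write $\int e^{2\pi i g(x)}d\mu = \sum_{\omega\in\mathcal W_b} p_\omega \widehat\mu(\ldots)$; but here the linearization must be done by hand on the polynomial $g\circ f_\omega$ rather than via the generic Lemma \ref{Lemma lin}, because I need the error term in terms of $N$ and the $l_j, m_j$. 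Since $g$ has integer coefficients of size at most $N^{1+\epsilon}$ and degree at most $N$, on the interval $[a,b]$ the second-order Taylor remainder of $g\circ f_\omega$ is controlled by $\|g''\|_\infty \cdot r_\omega^2 \le N^{3+\epsilon} b^N$-type bounds (using $b<1$ and that there are at most $N$ terms each of size $N^{1+\epsilon}$), which is negligible once $b$ is a small power of $a^{-N^{c}}$; I would make this precise so that the error is $\le a^{-N^{\tau}}$ for suitable $\tau$.

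\textbf{The genuinely new ingredient is the non-flatness estimate replacing Lemma \ref{Lemma prob of bad}.} For the polynomial $g$ I cannot appeal to $\delta$-non-flatness of $g'$ as a black box; instead I would exploit that $g'$ is a polynomial of degree $u_1 - 1 \ge N^{1/(2k)} - 1$ whose top coefficient (under the hypothesis $u_1 > v_1$, or $u_1 = v_1$ with $l_1 + m_1 \ne 0$) is a nonzero integer, hence of absolute value $\ge 1$. The key point is that a real polynomial of degree $d$ takes any given value at most $d$ times, and more quantitatively, a sub-level set $\{x\in[a,b]: |g'(x) - c| \le \rho\}$ for a degree-$d$ polynomial with leading coefficient bounded below can be covered by $O(d)$ intervals each of length $O(\rho^{1/d})$ — a standard consequence of, e.g., the fact that $(g')^{(j)}$ has at most $d-j$ zeros together with a Łojasiewicz/Markov-brothers argument, or directly from Remez-type inequalities. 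Feeding $d \asymp u_1 \in [N^{1/(2k)}, N]$ into this covering bound and combining with the Frostman exponent $s_0$ of $\mu$ (Proposition \ref{Lemma Feng}), exactly as in the proof of Lemma \ref{Lemma prob of bad}, I get that the $\mathbb P$-mass of cylinders $[\omega]$ with $\lambda g'(t_\omega) r_\omega \in [n,n+1]$ is at most $O(u_1) \cdot (\text{power of } \lambda b)^{s_0/u_1}$-type quantity; the crucial observation is that since $u_1 \ge N^{1/(2k)}$, the exponent $1/u_1$ only costs a factor that is still a power of $N$ in the final budget, and the factor $O(u_1) \le N$ is absorbed into the $N^{1+\epsilon}$ slack.

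\textbf{I expect the main obstacle to be the bookkeeping of exponents so that everything survives as $a^{-N^\tau}$ rather than merely $a^{-N^{o(1)}}$ or $\lambda^{-\tau}$.} Concretely: in Step 2 I apply Corollary \ref{Tsuji} (Tsujii's large deviations) with $t \asymp \log(\lambda \cdot b \cdot C)$ where now $C$ must be taken as a bound for $\|g'\|_\infty \asymp N^{2+\epsilon}$, not a constant — so $t \asymp N^{\tau'}$ for some $\tau' > 0$ once $b = a^{-N^{\tau''}}$, and the "sub-polynomial number of bad intervals'' $e^{\epsilon' t}$ becomes $a^{\epsilon' N^{\tau''}}$, which is fine provided the good-interval contribution $|\lambda b C|^{-c_0} = a^{-c_0 N^{\tau''}}$ dominates it after choosing $\epsilon'$ small. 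The delicate inequality is the analogue of $\epsilon(1-\gamma) + \tfrac{\delta s_0}{2}(1-\gamma) - s_0\gamma < 1$ from Step 4, now carried out with $N$-powers: I must choose the exponent $\tau''$ in $b = a^{-N^{\tau''}}$ and the Tsujii parameter $\epsilon'$ so that the linearization error $N^{O(1)} b^2 \asymp a^{-2N^{\tau''}}$, the "bad'' sum $a^{(\epsilon' - s_0/u_1 + \ldots)N^{\tau''}}$, and the "good'' sum $a^{-c_0 N^{\tau''}}$ are all $\le a^{-N^{\tau}}$ for one fixed $\tau > 0$ independent of $g$ (only on $\mu, k$). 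The hypotheses $u_1 = \deg g \ge N^{1/(2k)}$ and $|l_j|,|m_j| \le N^{1+\epsilon}$ are precisely what make this balancing possible, since they bound both the degree appearing in the non-flatness denominator and the size of $\|g'\|_\infty$ appearing in the Tsujii scale; I would carry out this optimization at the very end, after the three structural estimates are in place, and then remark that $\tau$ depends only on $(\mu, k)$ as claimed.
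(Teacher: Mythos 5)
Your overall architecture (linearize over a cut-set $\mathcal{W}_q$, apply Tsujii's large-deviation count to the frequencies $g'(t_\omega)r_\omega$, control the $\mathbb{P}$-mass of each bad unit interval via the Frostman exponent, then balance parameters) matches the paper's, and you correctly identify that the crux is a quantitative substitute for non-flatness adapted to the polynomials \eqref{polynomial}. But the substitute you propose fails. You invoke the generic Remez/Cartan-type bound that $\{x\in[a,b]:|g'(x)-c|\le\rho\}$ for a degree-$d$ polynomial is covered by $O(d)$ intervals of length $O\left(\left(\rho/\sup|g'|\right)^{1/d}\right)$, with $d=u_1-1$. Here the relevant window is $\rho=1/|r|\asymp a^{\delta_0 u_1}$, while on $[a,b]$ with $a>1$ one has $\sup|g'|\asymp N^{O(1)}b^{u_1}\le a^{Cu_1}$ for a constant $C$ (after reducing to $b<a^2$, a reduction your write-up also omits but which is needed so that the linearization error $N^{O(1)}b^{u_1}q^2$ can be made small while $a^{u_1}q$ stays large). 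Hence $\left(\rho/\sup|g'|\right)^{1/d}\asymp a^{(\delta_0-C)}$ is a \emph{constant}: the exceptional intervals do not shrink with $N$, the Frostman factor $(\cdot)^{s_0}$ gives no decay, and after multiplying by the Tsujii count $a^{\beta u_1}\ge a^{\beta N^{1/(2k)}}$ of bad unit intervals the estimate diverges. Your remark that ``the exponent $1/u_1$ only costs a factor that is still a power of $N$'' is exactly the point of failure: the $1/d$ in the exponent converts the exponential gain $a^{-cu_1}$ you need into the constant $a^{-c}$. (A related scale error: $\|g'\|_\infty$ is $\asymp N^{O(1)}b^{u_1}$, exponentially large in $u_1$, not $N^{2+\epsilon}$; this is why the conclusion can be of the form $a^{-N^\tau}$ in the first place.)

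What is actually required --- and what the paper proves as its key new ingredient (Lemma \ref{smallvalue1}, via Lemma \ref{smallvalue2}) --- is far stronger than a generic covering bound: for the \emph{sparse} polynomials at hand (at most $2k$ monomials, nonzero integer coefficients, $a>1$) one has $|g'(x)|,|g''(x)|\ge a^{u_1-N^{\epsilon}}$ outside $O(k)$ exceptional intervals of length $a^{-N^{\sigma}}$, i.e., the derivatives attain essentially their maximal size except on exponentially small sets. The lower bound on $g''$ then forces $\{t_\omega: g'(t_\omega)r\in[n,n+1]\}$ into a single interval of length about $a^{-u_1(1-\delta_0)+N^{\epsilon}}$, whose $\mu$-mass $a^{-s_0 u_1(1-\delta_0)+\cdots}$ beats the Tsujii count. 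Establishing this lower bound genuinely uses sparsity and integrality: one splits the exponents into an ``admissible'' block with small consecutive gaps, factors out $x^{u_j}$, and runs an iterated divided-difference argument on the residual polynomial, whose degree $u_1-u_j\le 2N^{3^{j-2}\delta}$ is small enough that degree-dependent losses are affordable, anchored by $|P^{(m)}(x)|=|l_m|\,m!\ge 1$. None of this can be recovered from a degree-$u_1$ covering bound, so the proposal has a genuine gap at its central step.
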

The proof of Theorem \ref{main-2} will take up the majority of this section. We remark that its proof is based upon the scheme of proof of Theorem \ref{thm:van_der_Corput_C1+a}. Also,  a significant part of our argument is devoted to study of the  of the polynomials as in \eqref{polynomial}. This discussion takes place in the next section.
\subsection{A covering lemma for a class of polynomials} \label{Section quan}
Let us first define a family of polynomials that will play an essential role in the proof of Theorem \ref{main-2}:
\begin{Definition} \label{Def poly}
Let $k\geq 2$ be an integer, and let $N\geq 2$. Define
\begin{equation}
   \mathcal{F}_{k, N}:=\left\{\sum_{j=1}^k l_j x^{u_j}:\, l_j\in\mathbb{Z},\, l_j\neq 0,\, |l_j|\leq N^4, u_j\in\mathbb{Z},\, 0\leq u_k<\ldots<u_1\leq N\right\}\subseteq \mathbb{Z}[x].
\end{equation}
\end{Definition}
By Descartes' sign rule, each polynomial in $\mathcal{F}_{k, N}$ has at most $2k$ distinct roots in $\mathbb{R}$ \cite[Lemma 3]{Cucker1999Smale}. We have the following quantitative covering Lemma concerning  the elements  of $\mathcal{F}_{k, N}$. 

\begin{lemma}\label{smallvalue1}
    Fix an integer $k\geq 2$ and reals $1<a<b$. For any $\epsilon>0$ there exists $\sigma\in (0, 1)$ such that for all large enough $N$ and $h\in\mathcal{F}_{k, N}$, there exist at most $2k+2$ sub-intervals $I_i\subset [a, b]$ satisfying :
\begin{enumerate}
    \item $|I_i|\leq a^{-N^{\sigma}}$ for each $i$, and
    \item $|h(x)|\geq a^{u_1-N^{\epsilon}}$ for all $x\in [a, b]\setminus\bigcup_i I_i$.
\end{enumerate}
\end{lemma}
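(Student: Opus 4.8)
The plan is to reduce the statement to a quantitative lower bound for polynomials away from their real roots, using the fact that $h \in \mathcal{F}_{k,N}$ has at most $2k$ real roots (by Descartes' rule of signs, as noted). Write $h(x) = \sum_{j=1}^k l_j x^{u_j}$ with $u_1 = \deg h$ and leading coefficient $l_1$ satisfying $1 \le |l_1| \le N^4$. Let $\rho_1 < \cdots < \rho_m$, with $m \le 2k$, be the real roots of $h$ in $[a,b]$ (if any). Around each $\rho_i$ I would place an interval $I_i = (\rho_i - \eta, \rho_i + \eta)$ of half-width $\eta = a^{-N^\sigma}$, for a $\sigma \in (0,1)$ to be chosen; together with possibly two more short intervals trimming the endpoints of $[a,b]$ this gives at most $2k+2$ intervals as required in item (1). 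It remains to prove item (2): for $x \in [a,b]$ at distance at least $\eta$ from every real root, $|h(x)| \ge a^{u_1 - N^\epsilon}$.

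For item (2) I would factor $h(x) = l_1 \prod_{r}(x - \zeta_r)$ over $\mathbb{C}$, where the $\zeta_r$ range over all $u_1$ complex roots (with multiplicity). Group the factors into real roots $\zeta_r = \rho_i \in [a,b]$, other real roots, and complex roots. For $x \in [a,b]$ the complex and out-of-range real roots contribute a controllable lower bound: each $|x - \zeta_r|$ is at least some positive quantity, and here I would invoke a Mahler-measure / root-separation estimate for integer polynomials — the roots of $h$ cannot be too close to $[a,b]$ unless they lie in it, quantitatively in terms of the coefficient bound $N^4$ and degree $N$, so this product is at least $a^{-CN^\epsilon}$-ish provided $\sigma$ is chosen small relative to $\epsilon$. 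For the real roots $\rho_i$ inside $[a,b]$, if $x$ is at distance $\ge \eta = a^{-N^\sigma}$ from $\rho_i$ then $|x - \rho_i| \ge a^{-N^\sigma}$, and there are at most $2k$ such factors, contributing at least $a^{-2k N^\sigma}$. Multiplying, and using $|l_1| \ge 1$ together with the fact that $a > 1$ and the remaining factors $|x - \zeta_r|$ for $\zeta_r$ close to being the "large" contributions actually produce $x^{u_1}$-size growth — more precisely, factoring out $x^{u_1}$ gives $|h(x)| = |x|^{u_1} |l_1 + \sum_{j\ge 2} l_j x^{u_j - u_1}|$, and the second factor is bounded below by the same root-distance analysis applied to $h(x)/x^{u_1}$ near $[a,b]$ — one gets $|h(x)| \ge a^{u_1} \cdot a^{-O(kN^\sigma)} \cdot a^{-O(N^\epsilon)} \ge a^{u_1 - N^\epsilon}$ once $\sigma$ is small enough (say $\sigma < \epsilon / (4k)$) and $N$ is large.

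The cleanest way to run the lower bound, and the one I would actually write, is via a resultant/discriminant argument: for an integer polynomial $h$ with coefficient height $H$ and degree $D$, the distance from any point $x_0 \in \mathbb{R}$ to the nearest root is, away from actual roots, controlled through the fact that $|h(x_0)|$ and the separation of roots are bounded below by quantities polynomial in $H^{-1}$ and exponential in $-D$; since here $\log H \le 4\log N$ and $D \le N$, all such losses are of size $a^{-O(N^\epsilon)}$ after choosing $\sigma$ small, which is swallowed by the $N^\epsilon$ budget in the exponent. Concretely, I expect to use that on the set where $|h(x)| < a^{u_1 - N^\epsilon}$, the function $x \mapsto h(x)$, having at most $2k$ real zeros and derivative of controlled size, forces $x$ to lie within $a^{-N^\sigma}$ of a zero — this is a Remez-type / "small value implies near a root" inequality for polynomials with few zeros, which can be proved by a direct mean-value argument on each of the $\le 2k+1$ intervals of monotonicity of $h$.

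\medskip

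\noindent\textbf{Main obstacle.} The crux is the quantitative step: turning "$h$ is small" into "$x$ is near a real root", uniformly over all $h \in \mathcal{F}_{k,N}$, with the near-root scale $a^{-N^\sigma}$ genuinely smaller than the target scale $a^{-N^\epsilon}$ would suggest is compatible. One must carefully track how the coefficient bound $N^4$ and degree $N$ enter the root-separation and value estimates, and verify that all resulting exponential-in-$N$ losses are in fact only of order $N^\sigma$ or $N^\epsilon$ — not order $N$ — so that the conclusion $|h(x)| \ge a^{u_1 - N^\epsilon}$ survives. Choosing $\sigma$ correctly in terms of $k$ and $\epsilon$, and handling the interplay between the $x^{u_1}$ growth factor and the root-distance factors (so that the large degree helps rather than hurts), is where the real work lies; the bookkeeping with the at most $2k+2$ intervals is routine by comparison.
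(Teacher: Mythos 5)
Your structural observations are correct --- $h$ and $h'$ have at most $2k$ real roots by Descartes, so the sublevel set $\{x\in[a,b]: |h(x)|<a^{u_1-N^{\epsilon}}\}$ is a union of at most $2k+2$ intervals, exactly as in the paper --- but the quantitative core of your argument does not close, and the obstacle you flag at the end is fatal rather than a matter of bookkeeping. Any root-separation, Mahler-measure, resultant or discriminant bound for an integer polynomial of degree $D\le N$ and height $H\le N^4$ loses factors of size $H^{-O(D)}D^{-O(D)}=e^{-O(N\log N)}$; nothing in a height/degree bound alone prevents a complex root from sitting at distance $e^{-cN\log N}$ from $[a,b]$ without lying in it, so the product $\prod_{\zeta_r\notin[a,b]}|x-\zeta_r|$ over the up to $N$ complex roots can only be bounded below by $a^{-O(N\log N)}$ by these generic tools --- nowhere near the required $a^{-O(N^{\epsilon})}$. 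Likewise, running your iterated mean-value (``small value implies near a root'') argument directly on $h$ requires differentiating $\deg h=u_1$ times, and the compounded losses give only $|h(x)|\ge a^{-O(u_1^2)}$ outside intervals of length $a^{-O(u_1)}$ (this is exactly the paper's Lemma \ref{smallvalue2} with $m=u_1\approx N$), which is useless against the target $a^{u_1-N^{\epsilon}}$.

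The missing idea is to exploit the $k$-term sparsity a second time, through a dichotomy on the exponents. The paper fixes $\delta=\epsilon\cdot 3^{-(k+1)}$ and takes the largest $j$ such that the consecutive gaps satisfy $u_i-u_{i+1}\le N^{3^{i-1}\delta}$ for $i<j$ while $u_j-u_{j+1}>N^{3^{j-1}\delta}$. It then splits $h=T_1+T_2$ with $T_1=\sum_{i\le j}l_ix^{u_i}=x^{u_j}P(x)$, where $P$ has degree $u_1-u_j\le 2N^{3^{j-2}\delta}$, a tiny power of $N$. Lemma \ref{smallvalue2} (the iterated divided-difference argument) applied to this \emph{low-degree} $P$ loses only about $a^{-N^{3^{j-1}\delta}}\ge a^{-N^{\epsilon/3}}$, the monomial $x^{u_j}$ supplies the main growth $a^{u_j}\ge a^{u_1-2N^{3^{j-2}\delta}}$, and the large gap $u_j-u_{j+1}$ makes the tail $T_2$ negligible against $T_1$ on the good set. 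Without some such reduction to a polynomial of degree $N^{o(1)}$ --- or another genuine use of the fewnomial structure beyond counting real roots --- the lower bound in item (2) cannot be reached, so as written the proposal has a real gap at its central step.
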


Our proof of Lemma \ref{smallvalue1} relies on the following lemma. 
\begin{lemma}\label{smallvalue2}
    Fix an integer $k\geq 2$ and reals $1<a<b.$ Let $m\geq 1$ and let $P(x) \in\mathbb{R}[x]$ be a  polynomial such that $\deg(P)=m$ and:
$$P(x)=\sum_{j=0}^m l_j x^j,\quad |l_m|\geq 1,\quad \sharp\{0\leq j\leq m: l_j\neq 0\}\leq k.$$

Then for any $q> 2(1+\log_a 2+\log_a 3)$ there exist at most $2k+2$ sub-intervals $I_i\subset [a, b]$ such that:
\begin{enumerate}
    \item $|I_i|\leq a^{-mq}$ for each $i$;
    \item $|P(x)|\geq a^{-(mq)^2}$ for all $x\in [a, b]\setminus\bigcup_i I_i$.
\end{enumerate}
\end{lemma}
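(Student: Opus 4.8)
The plan is to control the small-value set of a polynomial $P$ with at most $k$ nonzero coefficients and leading coefficient bounded below by $1$, over the interval $[a,b]$ with $1<a<b$. The natural starting point is a factorization $P(x) = l_m \prod_{i=1}^{m}(x - \zeta_i)$ over $\C$. Since $P$ is lacunary (at most $k$ nonzero terms), Descartes' rule of signs (as invoked after Definition \ref{Def poly}) gives that $P$ has at most $2k-1$ real roots; more to the point, the sub-product coming from roots that are close to $[a,b]$ involves at most $2k$ factors — this is where the factor $2k+2$ in the conclusion will come from. First I would split the roots $\zeta_i$ into those within distance, say, $a^{-mq}$ of the real interval $[a,b]$ (call these the "dangerous" roots) and the rest. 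Around each dangerous root place one interval $I_i$ of length $2 a^{-mq}$; since there are at most $2k$ real roots with multiplicity, plus possibly a conjugate pair near the endpoints, one gets at most $2k+2$ intervals, each of length $\le a^{-mq}$ after a mild adjustment of the constant, establishing (1).

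For (2), fix $x \in [a,b] \setminus \bigcup_i I_i$ and estimate $|P(x)| = |l_m| \prod_i |x - \zeta_i| \ge \prod_i |x - \zeta_i|$. The dangerous roots contribute factors that are at least $a^{-mq}$ each (since $x$ avoids the $I_i$'s), and there are at most $2k \le m$ of them, so their total contribution is at least $a^{-2k\,mq} \ge a^{-m^2 q}$ — already within the target bound $a^{-(mq)^2}$ as long as we are slightly generous. The non-dangerous roots each contribute a factor $|x-\zeta_i|$; those could be small only if the root were close to $[a,b]$, which is excluded, OR large, which only helps. The one genuinely delicate point is when $|x - \zeta_i|$ is large (a root far away): then we need an \emph{upper} bound on how much the product can overshoot in the reciprocal direction — but since we want a \emph{lower} bound on $|P(x)|$, large factors are harmless, so the only real constraint is the lower bound from the dangerous roots, and it suffices. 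The role of the hypothesis $q > 2(1 + \log_a 2 + \log_a 3)$ is almost certainly a book-keeping device: it guarantees $a^{-mq}$ is small enough (relative to $|b-a|$ and to the spacing needed so that at most $2k+2$ intervals suffice, absorbing factors of $2$ and $3$ from root multiplicities/conjugate pairs and from doubling the interval radii) and that $2k\,mq \le (mq)^2$, i.e., $2k \le mq$, which holds once $q$ is a fixed constant exceeding something depending only on $k$ and $a$ — and indeed $q > 2(1+\log_a 2 + \log_a 3) \ge 2$ together with $m \ge 1$ and the fact that we may assume $m \ge k$ (else there is nothing to prove, as a degree-$m$ polynomial has at most $m < k$ roots) gives $mq \ge 2 \ge \ldots$; the precise inequality is exactly what that explicit threshold is tuned to.

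The main obstacle I anticipate is \emph{not} the root-counting (Descartes handles that cleanly) but rather making the interval count come out at exactly $2k+2$ rather than something like $4k$: complex roots come in conjugate pairs, and a pair $\zeta, \bar\zeta$ with small imaginary part but real part inside $[a,b]$ produces essentially one "bad spot" on the real line, not two, so one must argue that such a pair can be covered by a single interval $I_i$ (of length comparable to the distance from the real part to the nearest real point where $|P|$ is forced small). Carefully merging conjugate-pair contributions, and separately accounting for the at most two endpoint regions of $[a,b]$ where a root just outside the interval can still cause trouble, is the combinatorial heart of the argument. Once the intervals are in hand, the lower bound in (2) follows by the product estimate sketched above, using $2k \le mq$ to convert $a^{-2k\cdot mq}$ into the stated $a^{-(mq)^2}$.
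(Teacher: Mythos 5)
Your factorization approach has a genuine gap at its central counting step. You bound the number of ``dangerous'' roots --- the roots of $P$ within distance $a^{-mq}$ of $[a,b]$ --- by $2k$ plus a conjugate pair, citing Descartes' rule of signs. But Descartes controls only the \emph{real} roots of a lacunary polynomial; it says nothing about how many \emph{complex} roots can cluster within $a^{-mq}$ of the segment $[a,b]$, and every such root forces $|P|$ to be small on the real line, so each one must be covered by one of your intervals. A degree-$m$ fewnomial has $m$ complex roots, and bounding how many of them can lie in a tiny disk around a point $x_0\neq 0$ by a function of $k$ alone is a genuine theorem about sparse polynomials, not a corollary of Descartes; you neither prove nor cite such a statement, and you explicitly defer the ``merging of conjugate pairs'' that would carry this burden. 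A secondary issue: even granting the count, a single connected component of $\{x\in[a,b]:|P(x)|<a^{-(mq)^2}\}$ can stretch across a chain of several root-neighborhoods, so the length bound $|I_i|\le a^{-mq}$ needs a further argument (for instance: since $|l_m|\ge 1$, every point of the sublevel set is within $a^{-mq^2}$ of some root, and a connected component meets at most $m$ such balls, giving length $\le 2m\,a^{-mq^2}\le a^{-mq}$ thanks to the size of $q$); as written, your intervals are just $2a^{-mq}$-neighborhoods of individual roots, and the two issues compound.

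The paper's proof avoids complex roots entirely. It sets $A=\{x\in[a,b]:|P(x)|<a^{-(mq)^2}\}$, gets the count $2k+2$ from Descartes applied to $P$ and $P'$ (components of $A$ are separated by local extrema of $P$), and then shows each component is short by contradiction: if some component $I$ had $|I|>a^{-mq}$, subdividing $I$ into $3^m$ equal pieces and iterating the mean value theorem $m$ times produces a point $x_0\in I$ with
\[
|P^{(m)}(x_0)|<\frac{2^{m}\, a^{-(mq)^2}}{(a^{-mq})^{m}\cdot 3^{-m}\cdot 3^{-m+1}\cdots 3^{-1}}<1,
\]
which is impossible since $P^{(m)}\equiv l_m\, m!$ and $|l_m|\ge 1$; the threshold $q>2(1+\log_a 2+\log_a 3)$ is tuned precisely to make the last inequality hold, not merely to ensure $2k\le mq$. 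If you want to salvage your route, you must either import a fewnomial zero-counting bound for complex disks away from the origin, or replace the root-counting by the sublevel-set component count and the chaining estimate sketched above.
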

\begin{proof}
    Let 
    $$A=\big\{x\in [a, b]: |P(x)|< a^{-(mq)^2}\big\}.$$
Since $P$ is a polynomial with less than $k$ terms, both $P$ and $P'$ have at most $2k$ real roots. Thus $A$ is the union of at most $2k+2$ intervals. Suppose that there exists an interval $I\subseteq [a, b]$ satisfying
\begin{equation}\label{eq:lemma-smallvalue2,1}
    |I|> a^{-mq}\;\text{and}\;|P(x)|< a^{-(mq)^2}\;\text{for all}\;x\in I.
\end{equation}

We divide the interval $I$ into $3^m$ consecutive sub-intervals of equal length: Initially, the interval $I$ is divided into $3$  sub-intervals of equal length,  $I_1,I_2,I_3$, labeled according to their Euclidean order.  Then, for each $i\in \{1,2,3\}$, $I_i$ has a similar decomposition $I_1=I_{i1}\cup I_{i2}\cup I_{i3}$.  Continuing inductively,  we decompose $I$ into sub-intervals $I_{\omega},  \omega\in \{1,2,3\}^m$.  For each $\omega\in \{1,2,3\}^m$, let us choose an arbitrary point $x_{\omega}\in I_{\omega}$.  For each $\omega\in \{1,2,3\}^{m-1}$ we have 
\[
\frac{P(x_{\omega*1})-P(x_{\omega*3})}{x_{\omega*1}-x_{\omega*3}}=P'(z_{\omega})
\]
for some point $z_{\omega}\in \mathcal{I}_{\omega}$.  By  \eqref{eq:lemma-smallvalue2,1},
\[|P'(z_{\omega})|< \frac{2\cdot a^{-(mq)^2}}{(a^{-mq}\cdot 3^{-m})}.\]
Similarly,  we see via the triangle inequality that for each $\omega\in \{1,2,3\}^{m-2}$, there exists $z_{\omega}\in I_{\omega}$ such that 
\[|P^{(2)}(z_{\omega})|< \frac{2^2\cdot a^{-(mq)^2}}{(a^{-mq}\cdot 3^{-m})(a^{-mq}\cdot 3^{-m+1})}.\]
Via $m$ iterations of this process, we can find a point $x_0\in I$ such that 
\begin{equation*}
    |P^{(m)}(x_0)|<\frac{2^{m}\cdot a^{-(mq)^2}}{(a^{-mq})^m\cdot 3^{-m}\cdot3^{-m+1}\cdots 3^{-1}}=a^{-(mq)^2+qm^{2}+m\log_a 2+\frac{m(m+1)}{2}\log_a 3}< 1,
\end{equation*}
where the last inequality follows from the assumptions that 
$q> 2(1+\log_a 2+\log_a 3)$ and $m\geq 1$. 
This is impossible, since $|l_m|\geq 1$ implies
\begin{equation*}
    |P^{(m)}(x)|=|l_m|\cdot m!\geq  1, \; \forall x\in [a, b].
\end{equation*}
It follows that each of the  $2k+2$ intervals that cover $A$ has diameter at most $ a^{-mq}$, as claimed.
\end{proof}

\noindent\textbf{Proof of Lemma \ref{smallvalue1}.} 
Let $\epsilon>0$ be small, and let $\delta=\epsilon\cdot 3^{-(k+1)}$. We say that a $j$-tuple  $(u_1,\ldots, u_j)$ with $2\leq j\leq k$ and $0\leq u_j<\ldots<u_1\leq N$ is \emph{admissible} if 
$$u_i-u_{i+1}\leq N^{3^{i-1}\delta} \text{ for all }1\leq i\leq j-1.$$
Additionally,  $(u_1)$ is \emph{admissible}   by convention.

Now, let $h(x)=\sum_{j=1}^k l_j x^{u_i}\in\mathcal{F}_{k, N}$. Let $j$ be the largest integer such that $(u_1,\cdots,u_j)$ is admissible but $(u_1,\cdots,u_j, u_{j+1})$ is not admissible. Note that it might happen that $j=k$, and in such a case, we just put $u_{j+1}=-\infty$. Thus, we always have 
\begin{equation*}
    u_j-u_{j+1}>N^{3^{j-1}\delta}.
\end{equation*}
Since $(u_1,\cdots,u_j)$ is admissible 
\begin{equation*}
    u_i-u_{i+1}\leq N^{3^{i-1}\delta}, \;\forall 1\leq i\leq j-1.
\end{equation*}
Let
$$T_1(x):=\sum_{i=1}^{j} l_i x^{u_i}\, \text { and } T_2(x):=\sum_{i=j+1}^{k} l_i x^{u_i},\, \text{ so that } h(x)=T_1(x)+T_2(x).$$ 
We rewrite $T_1$ as 
\begin{equation}\label{T1}
   T_1(x)=x^{u_j}\cdot \left(l_1 x^{u_1-u_j}+\ldots+l_{j}\right)=
   x^{u_j-N^{3^{j-1}\delta}}\cdot x^{N^{3^{j-1}\delta}}\cdot \left(l_1 x^{u_1-u_j}+\ldots+l_{j}\right).
\end{equation}
Let $q\in \mathbb{R}$ be such that
\begin{equation}\label{q}
   \left( \left(u_1-u_j \right)q \right)^2+5\log_a N=N^{3^{j-1}\delta}.
\end{equation}
Note that we have 
$$u_1-u_j\le N^{\delta}+\cdots +N^{3^{j-2}\delta}\le 2N^{3^{j-2}\delta}.$$
Therefore, as $N\gg 1$,  
$$g> 2+2\log_a2+2\log_a3.$$

Applying Lemma \ref{smallvalue2} to the polynomial $P(x)=l_1 x^{u_1-u_j}+\ldots+l_{j}$, it follows that there are at most $2k+2$ intervals $I_i\subset [a, b]$ satisfying 
\begin{equation}\label{claim1}
   |I_i|\leq a^{-(u_1-u_j)q}\;\text{for each}\;i,
\end{equation}
and 
\begin{equation}\label{P}
    |P(x)|\geq a^{-(u_1-u_j)^2q^2}\;\text{for all}\;x\in [a, b]\setminus\cup_i I_i.
\end{equation}
By \eqref{T1}, \eqref{q} and \eqref{P}, we have 
\begin{align*}
    |T_1(x)|\geq & x^{u_j-N^{3^{j-1}\delta}}a^{N^{3^{j-1}\delta}}a^{-(u_1-u_j)^2q^2}\\
    =& x^{u_j-N^{3^{j-1}\delta}}a^{4\log_a N} \;\text{for all}\; x\in [a, b]\setminus\cup_i I_i.
\end{align*}
Since $|l_{j+1}|, \ldots, |l_k|\leq N^4$ and $u_j-u_{j+1}>N^{3^{j-1}\delta}$, for $N\gg k$, we have 
\begin{equation*}
    |T_2(x)|\leq k a^{\log_a N^4}x^{u_i+1}\leq\frac{|T_1(x)|}{2}.
\end{equation*}
It follows that 
\begin{equation*}
    |h(x)|=|T_1(x)+T_2(x)|\geq\frac{1}{2}a^{u_j-N^{3^{j-1}\delta}}a^{5\log_a N} \;\text{for all}\; x\in [a, b]\setminus\cup_i I_i.
\end{equation*}
Recall that $3^{n+1}\delta=\epsilon$, when $N\gg k$, we get
\begin{equation*}
    |h(x)|\geq a^{u_1-N^{\epsilon}} \;\text{for all}\; x\in [a, b]\setminus\cup_i \mathcal{I}_i.
\end{equation*}
This is the second assertion.

Finally, the first assertion follows by combining \eqref{claim1}, and
\begin{equation*}
u_1-u_j\leq 2N^{3^{j-2}\delta},\quad \text{ and }   (u_1-u_j)q=(N^{3^{j-1}\delta}-5\log_a N)^{1/2}
\end{equation*}
from which a suitable  choice of $N^{\sigma}$ can be made.
\qed

\subsection{Proof of Theorem \ref{main-2}}
Let $\mu=\mu_\mathbf{p}$ be our non-atomic self-similar measure on $[a,b]$, and recall our other notations from the previous sections. Let $g\in \mathbb{Z}[x]$ be a polynomial as in \eqref{polynomial}, and recall the other parameters and notations from Theorem \ref{main-2}.  We aim to prove the existence of some $\tau>0$ such that
\begin{equation*}
\left|\int e^{2\pi i g(x) }d\mu(x) \right|\le a^{-N^{\tau}}, \text{ where } \tau \text { is independent of  } g.
\end{equation*}

We begin with the following reduction:
\begin{Lemma}  \label{Lemma relation}
If \eqref{eq:main-thm-estimate-1} holds whenever $1<a<b<a^2$, then it holds for arbitrary $1<a<b$.
\end{Lemma}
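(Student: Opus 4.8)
The plan is to reduce the case of a general interval $[a,b]$ with $1<a<b$ to the case where the interval is contained in $[a,a^2)$, by covering $[a,b]$ with boundedly many such pieces and using the self-similar structure of $\mu$ to pass to sub-IFSs whose attractors live in these small intervals. The key point is that integrating $e^{2\pi i g(x)}$ against $\mu$ can be broken up along a cut-set, and each cylinder piece $f_\omega([0,1])\cap [a,b]$ — more precisely $f_\omega$ applied to the support of $\mu$ — can be made to sit inside an interval of the form $[a', (a')^2)$ for a suitable $a'>1$, after which the hypothesis applies.

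Concretely, first I would choose an integer $\ell$ large enough that every cylinder $f_\omega([0,1])$ with $|\omega|=\ell$ has diameter smaller than $a^2-a$ divided by something safe; since all $|r_i|<1$ this only requires $\ell$ depending on $\Phi$ and on $a,b$, which are fixed. Then for each word $\omega\in\mathcal{A}^\ell$ the set $f_\omega(\mathrm{supp}\,\mu)\subseteq [a,b]$ is contained in some interval $[a_\omega, c_\omega]$ with $1<a_\omega\le c_\omega < a_\omega^2$ (using that $[a,b]\subseteq[1,\infty)$ so $a_\omega>1$, and shrinking $\ell$-cylinders so that $c_\omega/a_\omega<a_\omega$, which is possible because $a_\omega\ge a>1$ is bounded below). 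By Lemma \ref{Lemma basic}(2) applied with the cut-set $\mathcal{W}_b$ replaced by the full level-$\ell$ partition (this is just the stationarity identity $\mu=\sum_{|\omega|=\ell}p_\omega f_\omega\mu$), we get
\[
\int e^{2\pi i g(x)}\,d\mu(x) = \sum_{\omega\in\mathcal{A}^\ell} p_\omega \int e^{2\pi i g(f_\omega(y))}\,d\mu(y).
\]
Now $g\circ f_\omega$ is again a polynomial; writing $f_\omega(y)=r_\omega y+t_\omega$ and expanding each monomial $x^{u}$ in $g$ via the binomial theorem, $g\circ f_\omega$ has the same structure as \eqref{polynomial} with the same leading exponent $u_1=\deg(g)$ (the top-degree coefficient of $g\circ f_\omega$ is just $r_\omega^{u_1}$ times that of $g$, which is still a nonzero integer up to the harmless rescaling; one checks the leading-term non-cancellation hypothesis $u_1>v_1$ or $l_1+m_1\neq0$ is preserved because it only involves the top-degree term, and $r_\omega^{u_1}\neq 0$). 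The coefficient bounds $|l_j|,|m_j|\le N^{1+\epsilon}$ may blow up after expansion, but only by a factor $\le (1+|t_\omega|/|r_\omega|)^N \cdot \binom{N}{\le N}$ which is at most $e^{O(N)}$, and since the conclusion we want is of the shape $a^{-N^\tau}$ with $\tau<1$, one re-examines: more carefully, to keep coefficients polynomially bounded one should instead conjugate to the normalized interval — i.e. apply the already-known $[a',(a')^2)$ case directly to the pushforward measure $f_\omega\mu$, which is itself a non-atomic self-similar measure (with respect to the conjugated IFS $\{f_\omega f_i f_\omega^{-1}\}$) supported in $[a_\omega,a_\omega^2)$, against the \emph{original} polynomial $g$ restricted to that interval.

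That last reformulation is the clean route: $f_\omega\mu$ is a non-atomic self-similar measure on $[a_\omega, c_\omega]\subseteq[a_\omega,a_\omega^2)$, the polynomial $g$ is unchanged, its degree and leading-coefficient hypotheses are unchanged, and $u_1=\deg(g)\ge N^{1/(2k)}$ still holds; so the assumed special case gives $|\int e^{2\pi i g}\,d(f_\omega\mu)|\le a_\omega^{-N^{\tau_\omega}}\le a^{-N^{\tau_0}}$ for a uniform $\tau_0>0$ (the exponent $\tau$ from Theorem \ref{main-2} depends only on $\mu$, $k$, and the interval through $a$, and there are only finitely many words $\omega\in\mathcal{A}^\ell$, so we may take the minimum). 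Summing over $\omega$ with weights $p_\omega$ summing to $1$ gives $|\int e^{2\pi i g}\,d\mu|\le a^{-N^{\tau_0}}$, completing the reduction. The main obstacle — and the only place needing genuine care — is verifying that the conjugated/pushed-forward IFS is still a bona fide self-similar IFS of the type covered by Theorem \ref{main-2} (it is, since conjugating similarities by a similarity gives similarities, and $\mu$ non-atomic is preserved), and that the exponent $\tau$ in the already-proved case can be chosen uniformly over the finitely many sub-intervals $[a_\omega, a_\omega^2)$, which follows because $a_\omega\ge a$ is bounded below away from $1$ and the dependence of $\tau$ on the geometry is through finitely many data.
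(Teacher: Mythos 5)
Your proposal is correct and, in its final ``clean route,'' is essentially identical to the paper's proof: decompose $\mu=\sum_{|\omega|=\ell}p_\omega f_\omega\mu$ at a fixed level $\ell$ chosen so that each $f_\omega\mu$ is a non-atomic self-similar measure supported in an interval $[a_\omega,b_\omega]$ with $b_\omega<a_\omega^2$, apply the assumed special case to each piece, and take the convex combination with a uniform $\tau$ over the finitely many words. The preliminary detour via conjugating the polynomial is unnecessary, as you yourself recognize, but the argument you settle on matches the paper's.
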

\begin{proof}
Since $\Phi$ is uniformly contracting, there exists some $N$ such that: for every $\omega \in \mathcal{A}^N$ there exists $1<a(\omega)<b(\omega)<\left( a(\omega) \right)^2$ that satisfy
$$\supp \left( f_\omega \mu \right) \subseteq [a(\omega),\,b(\omega)].$$
It follows that the self-similar measure $f_\omega \mu$ satisfies \eqref{eq:main-thm-estimate-1}, possibly with $\tau=\tau(\omega)$  varying across the different $\omega \in \mathcal{A}^N$. By self-similarity, for all  $g$ as in Theorem \ref{main-2},
$$\left| \int e^{2\pi i g(x) }d\mu(x) \right|$$
is bounded above by a convex combination over
$$\left \lbrace \left| \int e^{2\pi i g(x) }\,df_\omega \mu(x) \right|: \omega \in \mathcal{A}^N \right \rbrace.$$
So, Theorem \ref{main-2} holds for $\mu$ (with some choice of $\tau>0$).
\end{proof}
By Lemma \ref{Lemma relation}, we may  assume that there exists some $0<\delta_0<1$ such that
\begin{equation} \label{eq:assumption-small-interval-support-1}
\frac{b}{a^{ 2\delta_0 }}<1.
\end{equation}
We fix a parameter $q=q(N)$ to be chosen later, and roughly follow the proof outline of Theorem \ref{thm:van_der_Corput_C1+a}:
$$ $$

\noindent{\textbf{Step 1: Linearization and choice of $q$.}} 
Recall the definition of the cut-set \eqref{Def W k}
\begin{equation*} 
\mathcal{W}_q  = \lbrace \omega \in \mathcal{A}^*: |r_\omega| \leq q < |r_{\omega^{-}}| \rbrace.
\end{equation*}
We have the following linearization lemma:
\begin{Lemma} \label{Lemma Lin poly}
We have 
\begin{equation} \label{eq:linearization-argument-1}
\left| \int e^{2\pi i g(x) }d\mu(x) \right|\leq \sum_{\omega\in \mathcal{W}_q}p_\omega \left| \widehat{\mu} \left( g'(t_\omega)\cdot r_\omega\right) \right|+O\left(N^2\cdot u_1 ^2 \cdot b^{u_1} \cdot q^2 \right).
\end{equation}
\end{Lemma}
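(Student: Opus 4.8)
The plan is to mimic Step 1 of the proof of Theorem \ref{thm:van_der_Corput_C1+a}, replacing the abstract H\"older bound of Lemma \ref{Lemma lin} with an explicit second-order Taylor estimate that exploits the polynomial structure of $g$. First I would apply Lemma \ref{Lemma basic} Part (2) with the cut-set $\mathcal{W}_q$ to write $\int e^{2\pi i g(x)}\,d\mu(x) = \sum_{\omega\in\mathcal{W}_q} p_\omega\,\widehat{\mu}\bigl(g(f_\omega(x))\bigr)$, where I am using the shorthand that $\widehat{\mu}(g(f_\omega(x)))$ means $\int e^{2\pi i g(f_\omega(y))}\,d\mu(y)$. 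Then, for each fixed $\omega$, I would Taylor-expand $g\circ f_\omega$ around $t_\omega = f_\omega(0)$: since $f_\omega(x) = r_\omega x + t_\omega$, we have $g(f_\omega(x)) = g(t_\omega) + g'(t_\omega) r_\omega x + \tfrac12 g''(\xi)(r_\omega x)^2$ for some $\xi$ between $t_\omega$ and $f_\omega(x)$, both lying in $[a,b]$. Replacing $e^{2\pi i(\cdot)}$ by its linearization costs, via the $2\pi$-Lipschitz bound on $t\mapsto e^{2\pi i t}$, an error of size $O\bigl(\sup_{[a,b]}|g''|\cdot r_\omega^2\bigr)$ uniformly in the point $x\in[a,b]$ (the constant term $g(t_\omega)$ and the factor $x\le b$ are absorbed). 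Summing over $\omega\in\mathcal{W}_q$ against the probability weights $p_\omega$, and using $|r_\omega|\le q$ for all $\omega\in\mathcal{W}_q$, the total linearization error is $O\bigl(\sup_{[a,b]}|g''|\cdot q^2\bigr)$.

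The one genuinely new point, compared to Theorem \ref{thm:van_der_Corput_C1+a}, is to make the constant $\sup_{[a,b]}|g''|$ explicit in terms of $N$ and $u_1$, which is exactly the source of the claimed factor $N^2 u_1^2 b^{u_1}$. From the form \eqref{polynomial}, $g$ is a sum of at most $2k$ monomials $c\,x^u$ with $|c|\le N^{1+\epsilon}$ and $1\le u\le u_1 = \deg(g)$; differentiating twice, each contributes a term bounded on $[a,b]$ by $N^{1+\epsilon}\cdot u(u-1) b^{u-2} \le N^{1+\epsilon} u_1^2 b^{u_1}$. Hence $\sup_{[a,b]}|g''| \le 2k\,N^{1+\epsilon} u_1^2 b^{u_1}$, and since $k$ is fixed and $N^{1+\epsilon}\le N^2$ for $N$ large (as $0<\epsilon\ll 1$), this is $O(N^2 u_1^2 b^{u_1})$. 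Folding this into the error term from the previous paragraph yields the stated bound $O(N^2 u_1^2 b^{u_1} q^2)$, and the main term $\sum_{\omega\in\mathcal{W}_q} p_\omega \bigl|\widehat{\mu}(g'(t_\omega) r_\omega)\bigr|$ comes from the linearized integrals $\int e^{2\pi i (g(t_\omega) + g'(t_\omega) r_\omega y)}\,d\mu(y)$ after discarding the unimodular factor $e^{2\pi i g(t_\omega)}$ and recognizing the remainder as $\widehat{\mu}(-g'(t_\omega) r_\omega)$, whose modulus equals $|\widehat{\mu}(g'(t_\omega) r_\omega)|$.

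I do not anticipate a serious obstacle here; the argument is a bookkeeping exercise. The only thing to be careful about is keeping track of which quantities are uniform in $\omega\in\mathcal{W}_q$: the bound $|r_\omega|\le q$ is uniform by definition of $\mathcal{W}_q$, and the bound on $\sup_{[a,b]}|g''|$ does not depend on $\omega$ at all, so after summing the weights $p_\omega$ (which sum to $1$ by Lemma \ref{Lemma basic} Part (1)) the error term is exactly as claimed. One should also note that the hypotheses $a>1$ and $u_1\le N$ are not needed for this lemma — only the shape of $g$ and the boundedness of $\operatorname{supp}\mu$ in $[a,b]$ — so the lemma holds in the generality stated.
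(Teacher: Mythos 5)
Your proposal is correct and follows essentially the same route as the paper: decompose via Lemma \ref{Lemma basic} Part (2) over the cut-set $\mathcal{W}_q$, Taylor-expand $g\circ f_\omega$ to second order around $t_\omega$, bound the error using the Lipschitz property of $t\mapsto e^{2\pi i t}$ together with $|r_\omega|\le q$, and control $\|g''\|_\infty$ by the explicit monomial structure of $g$. The only addition is that you spell out the estimate $\sup_{[a,b]}|g''|=O(N^2u_1^2b^{u_1})$, which the paper leaves implicit in \eqref{taylor}.
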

\begin{proof}
By (a minor variation of) Lemma \ref{Lemma basic}, as $\Phi$ is a self-similar IFS
\begin{equation} \label{eq:elementary-decomp-1}
\int e^{2\pi i g(x) }d\mu(x) = \sum_{\omega\in \mathcal{W}_q}p_\omega \int e^{2\pi i g(f_\omega (x))}d\mu(x).
\end{equation}
Recall that 
\begin{equation*}
    g(x)=\sum_{j=1}^{k-1}l_j (x^{u_j}-x^{u_{j+1}})+\sum_{j=1}^{k-1}m_j (x^{v_j}-x^{v_{j+1}}),
\end{equation*}
with $l_j, m_j\in\mathbb{Z}, 0<|l_j|, |m_j|\leq N^{1+\epsilon}$ and $1\leq u_k<\ldots<u_1\leq N, 1\leq v_k<\ldots<v_1\leq N$, where $k\geq 2, N\gg 1, 0<\epsilon\ll 1$ are fixed and the coefficient of the leading term in $g(x)$ is non-zero. 
 Note that $g(x)$ consists of at most $2k$ distinct terms. Moreover, we have $g(x), g'(x), g''(x)\in\mathcal{F}_{2k, N}$. Using the Taylor expansion and the assumption on $g$, for every $\omega \in \mathcal{A}^*$ we have 
\begin{align}\label{taylor}
g(f_\omega(x))=& g(f_\omega(0))+g'(f_\omega(0))(f_\omega(x)-f_\omega(0))+O(||g''||_\infty)(f_\omega(x)-f_\omega(0))^2)\notag \\
=& g(t_\omega)+g'(t_\omega)\cdot r_\omega\cdot x+O(N^{2}u_1^2b^{u_1} r_\omega^2).
\end{align}
The lemma follows by combining \eqref{eq:elementary-decomp-1}, \eqref{taylor},  the Lipschitz property of the complex exponent, and the fact that $r_\omega \leq q$ for $\omega \in \mathcal{W}_q$, similarly to the first step in the proof of Theorem \ref{thm:van_der_Corput_C1+a}.
\end{proof}
We now choose
\begin{equation} \label{choice of q}
q= \left( N^2 a^{u_1} \right)^{-\delta_0}.
\end{equation}
So, by \eqref{eq:assumption-small-interval-support-1} we can find some $\delta_1>0$ such that
$$ N^2 u_1 ^2 b^{u_1} q^2 = O\left( a^{-u_1 \delta_1} \right).$$
Thus, as a corollary of Lemma \ref{Lemma Lin poly},
\begin{equation} \label{Eq lin after choice}
\left| \int e^{2\pi i g(x) }d\mu(x) \right|\leq \sum_{\omega\in \mathcal{W}_q}p_\omega \left| \widehat{\mu} \left( g'(t_\omega)\cdot r_\omega\right) \right|+O\left( a^{-u_1 \delta_1}  \right).
\end{equation}
$$ $$

\noindent{\textbf{Step 2: Applying Tsujii's Theorem and the covering lemma (Lemma \ref{smallvalue1}).}} Recall  definitions \eqref{Def W k} and \eqref{Def ck}. Fix $r\in \mathcal{C}_q$ and define $\mathcal{W}_{q,r} $ as in \eqref{Eq c k r}, 
\begin{equation*} 
\mathcal{W}_{q,r} = \lbrace \omega \in \mathcal{W}_q:\, r_{\omega} = r \rbrace.
\end{equation*}

Next, fix some $\beta>0$ to be determined later. Applying the Corollary \eqref{Eq. Tsuji} of Tsujii's Theorem \ref{Tsuji original} with this $\beta$, there exists some $c_0=c_0(\beta)>0$ such that for all $b$ large enough, putting $t=\log_a \left(2 u_1 \delta_0 \right)$ such that it grows to $\infty$ by the assumption $u_1\geq N^{\frac{1}{2k}}$ as $N\gg 1$,  we have  
\begin{equation}\label{eq:Tsujii-large-deviation-2}
\left|\left\{
n\in\mathbb{Z}: \exists \lambda\in [n,n+1]\cap[-a^{2\delta_0 u_1}, a^{2\delta_0 u_1}]\;\textrm{s.t.} \;|\widehat{\mu}(\lambda)|\ge a^{-c_0 u_1}
\right\}\right| \le  a^{\beta u_1}.
\end{equation}

We now deal, as before, with the $\mathbb{P}$ mass of the bad frequencies (we use the notation as in Lemma \ref{Lemma prob of bad}). This is where we use our work from Section \ref{Section quan}:  
Applying Lemma \ref{smallvalue1} to $g'(x)$ and $g''(x)$, we know that there exist $\sigma>0$ and at most $8k+8$ intervals $I_i\subset [a, b]$ with $|I_i|\leq a^{-N^{\sigma}}$ such that 
\begin{equation}\label{large-2derivative}
    |g'(x)|\geq a^{u_1-N^{\epsilon}} \; \text{and}\;|g''(x)|\geq a^{u_1-N^{\epsilon}}\; \text{for all}\;x\in [a, b]\setminus \cup_i I_i.
\end{equation}
Let us now define:
\begin{equation} \label{Eq L k r}
\mathcal{L}_{q,r} = \mathcal{W}_{q,r} \setminus \lbrace \omega \in \mathcal{W}_{q,r}:\, t_\omega \in      \bigcup_{i=1} ^{8k+8} I_i   \rbrace.
\end{equation}

Recall that $s_0>0$ is the Frostman exponent of the measure $\mu$  as in Proposition \ref{Lemma Feng}. 
\begin{Lemma} \label{Lemma prob of bad 2}
For every fixed $n\in \mathbb{Z}$,
$$\mathbb{P}\left( [\omega]:\, \omega\in \mathcal{L}_{k,r}, g'(t_{\omega})\cdot r \in [n,n+1] \right) = O\left(  \left( a^{-u_1(1-\delta_0)+N^{\epsilon}}\cdot N^{2\delta_0} +r\right)^{s_0} \right).$$
\end{Lemma}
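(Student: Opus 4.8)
The plan is to mimic the proof of Lemma \ref{Lemma prob of bad}, replacing the use of $\delta$-non-flatness of $g'$ by the quantitative covering estimate \eqref{large-2derivative} for $g'$ and $g''$. Fix $n \in \mathbb{Z}$ and set
\[
A := \left\{ t_\omega :\, \omega \in \mathcal{L}_{q,r},\, g'(t_\omega)\cdot r \in [n,n+1] \right\} \subseteq \left( g' \right)^{-1}\left( \left[ \tfrac{n}{r},\, \tfrac{n+1}{r} \right] \right) \cap \left( [a,b] \setminus \bigcup_{i=1}^{8k+8} I_i \right),
\]
so that $\diam(g'(A)) \le 1/|r|$. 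The point of intersecting with the complement of $\bigcup_i I_i$ — which is exactly how $\mathcal{L}_{q,r}$ is defined in \eqref{Eq L k r} — is that on $A$ we have $|g''| \ge a^{u_1 - N^\epsilon}$ by \eqref{large-2derivative}. Since $g'' \in \mathcal{F}_{2k,N}$ (it too has at most $2k$ terms), $g''$ has at most $2k$ real roots, so $g'$ is strictly monotone on each of at most $2k+1$ subintervals of $[a,b]$; hence $A$ splits into at most $2k+1$ pieces on each of which $g'$ is a diffeomorphism onto its image with $|(g')'| \ge a^{u_1 - N^\epsilon}$. On each such piece the preimage $A$ of an interval of length $1/|r|$ is contained in an interval of length at most $(1/|r|) \cdot a^{-u_1 + N^\epsilon}$, so $A$ is covered by $O(1)$ (namely $\le 2k+1$) intervals of length $a^{-u_1 + N^\epsilon}/|r|$.

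Next I would transfer this to a covering estimate for $\bigcup_{\omega} f_\omega([a,b])$, exactly as in Lemma \ref{Lemma prob of bad}: for $\omega \in \mathcal{W}_{q,r}$ we have $|r_\omega| = |r| = \Theta(q)$, and every point of $f_\omega([a,b])$ is within $\Theta(q) = \Theta(r)$ of $t_\omega$ (using that $[a,b]$ has bounded length and $f_\omega$ has contraction ratio $r$; more precisely $f_\omega([a,b])$ has length $|r|\cdot(b-a)$). So if $t_\omega$ lies in an interval of length $a^{-u_1+N^\epsilon}/|r|$, then $f_\omega([a,b])$ lies in an interval of length $a^{-u_1+N^\epsilon}/|r| + O(|r|)$. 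Recalling the choice $q = (N^2 a^{u_1})^{-\delta_0}$ from \eqref{choice of q}, so that $|r| = \Theta((N^2 a^{u_1})^{-\delta_0})$ and hence $1/|r| = \Theta(N^{2\delta_0} a^{u_1 \delta_0})$, the interval length becomes
\[
\Theta\left( a^{-u_1 + N^\epsilon} \cdot N^{2\delta_0} a^{u_1\delta_0} + q \right) = \Theta\left( a^{-u_1(1-\delta_0) + N^\epsilon} N^{2\delta_0} + r \right).
\]
Thus $\bigcup_{\omega \in \mathcal{L}_{q,r},\, g'(t_\omega) r \in [n,n+1]} f_\omega([a,b])$ is covered by $O(1)$ intervals of that length. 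Applying Proposition \ref{Lemma Feng} to each of these $O(1)$ covering intervals and using Boole's inequality (and that the cylinders $[\omega]$, $\omega \in \mathcal{W}_q$, partition $\mathcal{A}^\mathbb{N}$ with $\mu = \pi\mathbb{P}$, so $\mathbb{P}([\omega]) \le \mu(f_\omega([a,b]))$) gives
\[
\mathbb{P}\left( [\omega]:\, \omega\in \mathcal{L}_{q,r},\, g'(t_{\omega})\cdot r \in [n,n+1] \right) = O\left( \left( a^{-u_1(1-\delta_0)+N^{\epsilon}} N^{2\delta_0} + r \right)^{s_0} \right),
\]
which is the claim.

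The main obstacle — and the only genuinely new input compared to Lemma \ref{Lemma prob of bad} — is justifying the monotonicity/bounded-multiplicity step: one must check that $g'' \in \mathcal{F}_{2k,N}$ so that Descartes' rule (as invoked after Definition \ref{Def poly}) bounds its number of real roots by $2k$, and hence that on the complement of the bad intervals $I_i$ where $|g''| \ge a^{u_1-N^\epsilon} > 0$, the function $g'$ is injective with derivative bounded below, giving the length-distortion bound for preimages. I would be slightly careful that differentiating $g$ in \eqref{polynomial} does not increase the number of monomials beyond what $\mathcal{F}_{2k,N}$ allows and does not push coefficients above $N^4$ (here $u_1 \le N$ and $|l_j|,|m_j| \le N^{1+\epsilon}$, so after differentiating twice the coefficients are at most $N^{1+\epsilon}\cdot N^2 \le N^4$ for $N$ large, and the number of distinct monomials is at most $2k$); this is the routine bookkeeping that makes \eqref{large-2derivative} applicable. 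Everything else is a direct copy of the geometry in Lemma \ref{Lemma prob of bad}.
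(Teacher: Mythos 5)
Your proposal is correct and follows essentially the same route as the paper: both arguments use the lower bound $|g''|\ge a^{u_1-N^{\epsilon}}$ from \eqref{large-2derivative} on the complement of the $O(k)$ bad intervals, apply the mean value theorem to show that within each connected component the relevant $t_\omega$'s lie in an interval of length $O\bigl(a^{-u_1+N^{\epsilon}}/|r|\bigr)=O\bigl(a^{-u_1(1-\delta_0)+N^{\epsilon}}N^{2\delta_0}\bigr)$ via the choice \eqref{choice of q}, fatten by $O(r)$ to cover $f_\omega([a,b])$, and finish with Proposition \ref{Lemma Feng} and Boole's inequality. The only (immaterial) difference is presentational: the paper phrases the MVT step as a contradiction between two far-apart points, while you phrase it as a preimage-length/distortion bound on the monotone pieces of $g'$.
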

\begin{proof}
First, we claim that for any connected component $J$ of $[a,b]\setminus \left( \bigcup_{i=1} ^{8k+8} I_i \right)$, the set 
$$\left \lbrace t_\omega:\, \omega  \in \mathcal{L}_{q,r},\, t_\omega \in J, \text{ and } g'(t_\omega) r \in [n,\,n+1] \right \rbrace   $$
is  contained in an interval of length at most $\frac{2}{r_{\min}} \cdot a^{-u_1(1-\delta_0)+N^{\epsilon}}\cdot N^{2\delta_0}$. 

Indeed, 
it follows from \eqref{large-2derivative} that for $\omega,\eta\in  \mathcal{L}_{k,r}$, if
$$|t_\omega-t_\eta|> c\, \text{ for some }c>0, \text{ and } [t_\omega, t_\eta]\cap\cup_i I_i=\emptyset \text{ assuming } t_\omega<t_\eta,$$ 
then 
\begin{equation}\label{eq:expanding-property-1}
|g'(t_\omega) r-g'(t_\eta) r|\geq  a^{u_1-N^{\epsilon}}c|r|.
\end{equation}
So, if $c= \frac{2}{r_{\min}} \cdot  a^{-u_1(1-\delta_0)+N^{\epsilon}}\cdot N^{2\delta_0}$, we see that, by \eqref{choice of q} and since  $|r|\geq q\cdot r_{\min}$
\begin{equation}\label{eq:expanding-property-2}
|g'(t_\omega) r-g'(t_\eta) r|\geq  2.
\end{equation}
This is impossible since $\diam \left( [n,\,n+1]\right)=1$ and $g'(t_\omega) r,\,g'(t_\eta) r\in [n,n+1]$ by our assumptions.

Thus, invoking Proposition \ref{Lemma Feng} and arguing similarly as in Lemma \ref{Lemma prob of bad}, we obtain 
$$\mathbb{P}\left( [\omega]:\, \omega\in \mathcal{L}_{k,r},g'(t_{\omega})\cdot r \in [n,n+1] \right) = O \left( \left( a^{-u_1(1-\delta_0)+N^{\epsilon}}\cdot N^{2\delta_0} +r\right)^{s_0} \right). $$
\end{proof}

$$ $$

\noindent{\textbf{Step 3: Collecting the error terms and conclusion of the proof.}} Recalling \eqref{Eq L k r} and the term $c_0>0$ from \eqref{eq:Tsujii-large-deviation-2},  we partition $\mathcal{W}_{k,r}$ into three subsets:
\begin{itemize}
\item $\mathcal{W}_{q,r,1}$ is the collection of all $\omega \in \mathcal{L}_{q,r} $ such that
$$ \left| \widehat{\mu} \left( g'(t_\omega)\cdot r  \right) \right|\geq a^{-c_0\cdot u_1}.$$

\item $\mathcal{W}_{k,r,2}$ is the collection of all the other $\omega \in \mathcal{L}_{q,r}$, that is,  
$$\mathcal{L}_{k,r,2} = \mathcal{L}_{q,r} \setminus \mathcal{L}_{q,r,1}.$$ 

\item $\mathcal{W}_{q,r,3}$ is the family of all the other $\omega \in \mathcal{W}_{q,r}$,
$$\mathcal{W}_{q,r,3} = \mathcal{W}_{q,r} \setminus \mathcal{L}_{q,r}.$$
\end{itemize}

Now, for any $\omega\in  \mathcal{W}_{q,r}$ and $N$ large enough,
\begin{equation*}
    |g'(t_I) r_I|\leq 4k N^{1+\epsilon}b^{u_1}\leq a^{2\delta_0 u_1}.
\end{equation*}
This is important, since we are now in a position to apply \eqref{eq:Tsujii-large-deviation-2} to our $\mathcal{W}_{q,r,1}$, as we do below. Note  that, due to the properties of the coding map $\pi$ (as in e.g. Lemma \ref{Lemma prob of bad}), $\{\pi([\omega])\}_{\omega\in \mathcal{W}_{q,r,3}}$ is contained in at most $8k+8$ intervals of length less than $a^{-N^{\sigma}}+r$. Applying  \eqref{Eq lin after choice},  and omitting global multiplicative constants from notation, using Lemma \ref{Lemma basic} to estimate the size of $\mathcal{C}_q$, we get
\begin{align*}
&\left|\int e^{2\pi i g(x) }d\mu(x)\right| \leq  \sum_{\omega\in \mathcal{W}_b}p_\omega \left| \widehat{\mu} \left( g'(t_\omega)\cdot r_\omega\right) \right|+a^{-u_1 \delta_1} \\
\le & \sum_{r\in \mathcal{C}_q} \left( \left|\sum_{\omega\in \mathcal{W}_{q,r,1}}p_\omega\widehat{\mu}\left(g'(t_\omega) r\right)\right| +\left|\sum_{\omega \in \mathcal{W}_{q,r,2}}p_\omega\widehat{\mu}\left(g'(t_\omega) r \right)\right|+\left|\sum_{\omega \in \mathcal{W}_{q,r,3}}p_\omega\widehat{\mu}\left(g'(t_\omega) r\right)\right|\right)+a^{-u_1 \delta_1}\\
\le  & \sum_{r\in \mathcal{C}_q} \left( \sum_{n\in \Z}\left(\sum_{\substack{\\ \omega\in \mathcal{W}_{q,r,1} \\ g'(t_\omega)r_\omega \in \left[n, n+1\right] }}p_\omega\right)+
\sum_{\omega\in \mathcal{W}_{q,r,2}}p_\omega a^{-c_0 u_1}+\sum_{\omega\in \mathcal{W}_{q,r,3}}p_\omega \right)+a^{-u_1 \delta_1}\\
\le & \log(q^{-1})\cdot \left(  a^{\beta u_1}\cdot \left(a^{-u_1(1-\delta_0)+N^{\epsilon}}\cdot N^{2\delta_0} +\left( N^2 a^{u_1} \right)^{-\delta_0}\right)^{s_0} +a^{-c_0 u_1}+(8k+8)\cdot \left( a^{-N^{\sigma}}+\left( N^2 a^{u_1} \right)^{-\delta_0}\right)^{s_0}\right)+a^{-u_1 \delta_1}.
\end{align*}
Putting $\beta < \min \lbrace \delta_0, \, 1-\delta_0 \rbrace$ and recalling that  $u_1\geq N^{\frac{1}{2k}}$ and $\epsilon \ll 1$, we get 
\[
\left|\int e^{2\pi i g(x) }d\mu(x)\right|\leq a^{-N^{\tau}}
\] 
for some constant $\tau$ which is independent of the polynomial $g$. The proof is complete. \hfill{$\Box$}

\subsection{Proof of Theorem \ref{Main Theorem pos}}
 Theorem \ref{Main Theorem pos} will be deduced from Theorem \ref{main-2} via the following Proposition. It works in significantly greater generality, and is due to Technau and Yesha \cite{Yesha2023Technau}. Recall the definitions and the notations as in Section \ref{Section corr} and Theorem \ref{Main Theorem pos}. In particular, $\mu$ is our non-atomic self-similar measure. 
 \begin{Proposition} \cite[Proposition 7.1]{Yesha2023Technau} \label{Prop yesha}
Let $k\geq 2$ and let $C_k(N)$  be a sequence such that $\lim_{N\rightarrow \infty} C_k (N)=1$. Suppose that there exists some $\tau>0$ such that for all $f\in C_c ^\infty \left(\mathbb{R}^{k-1} \right)$ we have
$$\int_J \left( R_k(f, (x^n \mod 1)_{n\geq 1}, N)-C_k(N)\cdot \int_{\mathbb{R}^{k-1}}f(\textbf{x})  d\textbf{x} \right)^2 d\mu (x) = O\left( N^{-\tau} \right),\, \text{ as } N\rightarrow \infty,$$
where $\mu$ is a supported on a bounded interval $J$. 

Then for $\mu$-a.e.~$x$ the sequence $(x^n \mod 1)_{n\geq 1}$ has Poissonian $k$-point correlations.
 \end{Proposition}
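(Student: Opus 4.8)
\emph{Approach.} The proof is by the second‑moment method together with a sandwiching argument: Chebyshev's inequality and the Borel--Cantelli lemma turn the variance hypothesis into almost‑sure convergence of $R_k$ along a sparse sequence of scales, and one then fills in the intermediate scales. Self‑similarity of $\mu$ is irrelevant here; all that is used is that $\mu$ is a Borel probability measure on the bounded interval $J$. Throughout, write $\int f:=\int_{\mathbb{R}^{k-1}}f$, $e(t):=e^{2\pi i t}$, and $\Delta(\mathbf u):=\Delta(\mathbf u,(\{x^n\})_{n\ge1})$. By linearity of $f\mapsto R_k(f,\cdot,N)$ --- add a large multiple of a fixed nonnegative bump to $f$ --- it suffices to treat nonnegative $f\in C_c^\infty(\mathbb{R}^{k-1})$; we shall prove $R_k(f,(x^n\bmod1)_{n\ge1},N)\to\int f$ for $\mu$‑a.e.\ $x$ for each such $f$, and a routine squeezing argument over a countable dense family then upgrades this to: $\mu$‑a.e.\ $x$ has Poissonian $k$‑point correlations. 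Fix a nonnegative $f\in C_c^\infty$ from now on.

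\emph{Step 1 (convergence along a subsequence).} Set $s=\lceil 2/\tau\rceil$ and $N_m=m^{s}$, so that $\sum_m N_m^{-\tau}<\infty$ while $N_{m+1}/N_m\to1$. By the hypothesis and Chebyshev's inequality, for every $\varepsilon>0$,
\[
\mu\Big\{x\in J:\ \big|R_k(f,(x^n\bmod1)_{n\ge1},N_m)-C_k(N_m)\textstyle\int f\big|>\varepsilon\Big\}\ \le\ \varepsilon^{-2}\,O\!\big(N_m^{-\tau}\big),
\]
which is summable in $m$; Borel--Cantelli, intersection over $\varepsilon\in\{1/j\}_{j\ge1}$, and $C_k(N_m)\to1$ give $R_k(f,(x^n\bmod1)_{n\ge1},N_m)\to\int f$ for $\mu$‑a.e.\ $x$. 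The same applies verbatim to any other fixed nonnegative function in $C_c^\infty$.

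\emph{Step 2 (filling the gaps).} For $\mathbf y\in\mathbb T^{k-1}$ let $\Psi_N^{f}(\mathbf y)=\sum_{\mathbf l\in\mathbb Z^{k-1}}f(N(\mathbf y+\mathbf l))$, so $R_k(f,(x^n)_{n\ge1},N)=\frac1N\sum_{\mathbf u\in\mathcal U_k(N)}\Psi_N^{f}(\Delta(\mathbf u))$. Fix $m_0$ and let $N_m\le N<N_{m+1}$ with $m\ge m_0$. As $N(\mathbf y+\mathbf l)$ is a rescaling of $N_{m+1}(\mathbf y+\mathbf l)$ by a factor lying in $[N_{m_0}/N_{m_0+1},1]$, uniform continuity of $f$ provides nonnegative $g^{+},g^{-}\in C_c^\infty$ (depending on $m_0$) with $g^{-}\le f\le g^{+}$ on a neighbourhood of all such rescalings and $\int g^{\pm}=\int f+o_{m_0}(1)$, whence $\Psi_{N_{m+1}}^{g^{-}}\le\Psi_N^{f}\le\Psi_{N_{m+1}}^{g^{+}}$ pointwise. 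Combining this with $\mathcal U_k(N)\subseteq\mathcal U_k(N_{m+1})$ and nonnegativity of the summands yields
\[
R_k\big(g^{-},(x^n)_{n\ge1},N_{m+1}\big)-E_m(x)\ \le\ R_k\big(f,(x^n)_{n\ge1},N\big)\ \le\ \tfrac{N_{m+1}}{N_m}\,R_k\big(g^{+},(x^n)_{n\ge1},N_{m+1}\big),
\]
where $E_m(x):=\frac1N\sum_{\mathbf u\in\mathcal U_k(N_{m+1})\setminus\mathcal U_k(N)}\Psi_{N_{m+1}}^{g^{-}}(\Delta(\mathbf u))\ge0$. Granting $E_m(x)\to0$ for $\mu$‑a.e.\ $x$ (Step 3), we let $N\to\infty$, apply Step 1 to $g^{\pm}$, use $N_{m+1}/N_m\to1$, then let $m_0\to\infty$ along $\mathbb N$ and intersect the countably many conull sets, to get $\int f\le\liminf_N R_k(f,\cdot,N)\le\limsup_N R_k(f,\cdot,N)\le\int f$ $\mu$‑a.e. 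Unwinding the first‑paragraph reductions completes the proof.

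\emph{Step 3 (the main obstacle: vanishing of $E_m$).} This is the delicate point. The set $\mathcal U_k(N_{m+1})\setminus\mathcal U_k(N)$ has $\asymp(1-N/N_{m+1})N_{m+1}^{k}=o(N_{m+1}^{k})$ elements, a proportion too large for the trivial $\|\Psi_{N_{m+1}}^{g^{-}}\|_\infty$‑bound to suffice once $k\ge2$. First eliminate the $N$‑dependence: since $\Psi^{g^{-}}_{N_{m+1}}\ge0$ and $N\ge N_m$, $E_m(x)$ is dominated, uniformly in $N\in[N_m,N_{m+1})$, by $\widetilde E_m(x):=\frac1{N_m}\sum_{\mathbf u:\,N_m<\max_i u_i\le N_{m+1}}\Psi_{N_{m+1}}^{g^{-}}(\Delta(\mathbf u))$. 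Expanding $\Psi^{g^{-}}_{N_{m+1}}$ by Poisson summation, the zero‑frequency part of $\widetilde E_m(x)$ equals $\big(\int g^{-}\big)\cdot N_m^{-1}N_{m+1}^{-(k-1)}\cdot\#\{\mathbf u:N_m<\max_i u_i\le N_{m+1}\}=O(N_m^{-1/s})$, deterministically $o(1)$; every nonzero‑frequency part involves an exponential sum $\sum_{\mathbf u}e(\mathbf l\cdot\Delta(\mathbf u))$ whose phase $\mathbf l\cdot\Delta(\mathbf u)=\sum_i l_i(x^{u_i}-x^{u_{i+1}})$ is a polynomial of exactly the shape \eqref{polynomial}. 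Consequently $\int_J\widetilde E_m(x)^2\,d\mu$ is governed by second moments of the same kind that underlie the variance hypothesis --- together with the non‑concentration of $\mu$ from Proposition~\ref{Lemma Feng}; in the generality of \cite{Yesha2023Technau} this input is already absorbed into the assumed statement --- and one obtains $\int_J\widetilde E_m(x)^2\,d\mu=O(N_m^{-\tau'})$ for some $\tau'>0$. A final application of Chebyshev and Borel--Cantelli along $\{N_m\}$ then gives $\widetilde E_m(x)\to0$, hence $E_m(x)\to0$, for $\mu$‑a.e.\ $x$, which is the estimate used in Step 2.
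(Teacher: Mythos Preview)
The paper does not prove this proposition itself; it quotes it from \cite{Yesha2023Technau} and merely remarks that the argument there carries over verbatim from Lebesgue measure to an arbitrary Borel probability measure on $J$. Your Steps~1--2 reproduce exactly that standard argument: Chebyshev and Borel--Cantelli along a lacunary-enough sequence $N_m=m^{\lceil 2/\tau\rceil}$, followed by a monotone sandwich to pass to general $N$. That part is fine.

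The gap is in Step~3. The proposition's hypothesis is a black box: it gives an $L^2(\mu)$ bound for $R_k(f,\cdot,N)-C_k(N)\int f$ for each \emph{fixed} $f$, nothing more. Your claimed bound $\int\widetilde E_m^2\,d\mu=O(N_m^{-\tau'})$ does not follow from that hypothesis; to justify it you appeal to ``second moments of the same kind that underlie the variance hypothesis'' and even to Proposition~\ref{Lemma Feng}, neither of which is available in the statement you are proving. Writing $\widetilde E_m=\tfrac{N_{m+1}}{N_m}R_k(g^-,\cdot,N_{m+1})-R_k(h_m,\cdot,N_m)$ with $h_m(\mathbf z)=g^-\!\big((N_{m+1}/N_m)\mathbf z\big)$ shows the obstruction clearly: $h_m$ depends on $m$, so the hypothesis does not give a usable bound with a uniform constant.

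The fix is that Step~3 is unnecessary. In Step~2, run the lower bound against $N_m$ rather than $N_{m+1}$: since the summands are nonnegative and $\mathcal U_k(N_m)\subseteq\mathcal U_k(N)$, one gets
\[
R_k(f,\cdot,N)\ \ge\ \frac{N_m}{N_{m+1}}\,R_k(h^-,\cdot,N_m),\qquad
R_k(f,\cdot,N)\ \le\ \frac{N_{m+1}}{N_m}\,R_k(g^+,\cdot,N_{m+1}),
\]
with $h^-,g^+\in C_c^\infty$ fixed (depending only on $m_0$) satisfying $h^-(\mathbf z)\le f(t\mathbf z)\le g^+(\mathbf z)$ for all $t$ in the relevant range and $\int h^-,\int g^+=\int f+o_{m_0}(1)$. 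Now Step~1 applied to the countable family $\{h^-,g^+\}_{m_0}$ and $N_{m+1}/N_m\to1$ finish the proof with no error term $E_m$ to control.
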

We note that Proposition \ref{Prop yesha} is stated and proved in \cite[Proposition 7.1]{Yesha2023Technau} for the Lebesgue measure on $J$, but it is not hard to see that the argument works for any Borel probability measure on $J$.
$$ $$

\noindent{\textbf{Proof of Theorem \ref{Main Theorem pos}}
   With no loss of generality, we assume $\xi=1$, and the other cases are similar. For any $f\in C^{\infty}_c(\mathbb{R}^{k-1})$ let $\widehat{f}$ denote its Fourier transform, and, recalling the definitions from Section \ref{Section corr}, put
$$C_k(N):=\frac{\sharp\mathcal{U}_k}{N^k}=\left(1-\frac{1}{N}\right)\cdots\left(1-\frac{k-1}{N}\right).$$
Let $\epsilon>0$ be an auxiliary parameter to be decided later. By Poisson summation we have
   \begin{align*}
    R_k(f, \{x^n\})_{n\geq 1}, N)
    =&\frac{1}{N^k}\sum_{\textbf{l}\in\mathbb{Z}^{k-1}}\widehat{f}\left(\frac{\textbf{l}}{N}\right)\sum_{\textbf{u}\in\mathcal{U}_k}
    e^{-2\pi i \sum_{j=1}^{k-1}l_j (x^{u_j}-x^{u_{j+1}})} \notag \\ 
    =&C_k(N)\widehat{f}(0)+\frac{1}{N^k}\sum_{\textbf{l}\in\mathbb{Z}^{k-1}\setminus\{\textbf{0}\}}\widehat{f}\left(\frac{\textbf{l}}{N}\right)\sum_{\textbf{u}\in\mathcal{U}_k}
    e^{-2\pi i \sum_{j=1}^{k-1}l_j (x^{u_j}-x^{u_{j+1}})}\\
    =&C_k(N)\widehat{f}(0)+\frac{1}{N^k}\sum_{ \textbf{l}\in\mathbb{Z}^{k-1}\atop 0<\|\textbf{l}\|_{\infty}\leq N^{1+\epsilon}}\widehat{f}\left(\frac{\textbf{l}}{N}\right)\sum_{\textbf{u}\in\mathcal{U}_k}
    e^{-2\pi i \sum_{j=1}^{k-1}l_j (x^{u_j}-x^{u_{j+1}})}\\
    &\quad\quad\quad +\frac{1}{N^k}\sum_{\textbf{l}\in\mathbb{Z}^{k-1}\atop\|\textbf{l}\|_{\infty}>N^{1+\epsilon}}\widehat{f}\left(\frac{\textbf{l}}{N}\right)\sum_{\textbf{u}\in\mathcal{U}_k}
    e^{-2\pi i \sum_{j=1}^{k-1}l_j (x^{u_j}-x^{u_{j+1}})},\\
\end{align*}
where $0<\|\textbf{l}\|_{\infty}\leq N^{1+\epsilon}$ means $0<|l_j|\leq N^{1+\epsilon}$ for each $j$. We rewrite the last displayed equation as
$$C_k(N)\cdot \widehat{f}(0)+X_N(x)+Y_N(x).$$
Since $f\in C^\infty(\mathbb{R})$, $\widehat{f}$ decays faster than any polynomial, so there exists $\alpha>0$ such that $|Y_N(x)|=O(N^{-\alpha})$ uniformly in  $x\in\spt(\mu)$. Next, we have 
$$\widehat{f}(0)=\int_{\mathbb{R}^{k-1}}f(\textbf{x})  d\textbf{x}, \text{ and } \lim_{N\rightarrow \infty}C_k(N)=1.$$ Thus, applying Proposition \ref{Prop yesha}, it remains to show that $\lim_{N\rightarrow\infty}X_N=0$ in $L^2 (\mu)$ with a polynomial rate of convergence. This is proved below.

Let 
 $$\mathcal{\tilde{U}}_k=\{\textbf{u}=(u_1,\ldots,u_k): u_i\in \lbrace 1,...,N\rbrace,\, 1\leq u_k<\ldots<u_1\leq N\}.$$ 
By symmetry we have 
\begin{align}\label{L2bound}
  &\quad\int |X_N(x)|^2 d\mu(x)\notag\\
 &=\int \Big|\frac{1}{N^k}\sum_{0<\|\textbf{l}\|_{\infty}\leq N^{1+\epsilon}}\widehat{f}\left(\frac{\textbf{l}}{N}\right)\sum_{\textbf{u}\in\mathcal{U}_k}
    e^{-2\pi i \sum_{j=1}^{k-1}l_j (x^{u_j}-x^{u_{j+1}})}\Big|^2 d\mu(x)\notag\\
 &=\frac{1}{N^{2k}}\sum_{0<\|\textbf{l}\|_{\infty}, \|\textbf{m}\|_{\infty}\leq N^{1+\epsilon}}\widehat{f}\left(\frac{\textbf{l}}{N}\right)
 \widehat{f}\left(\frac{\textbf{m}}{N}\right)\sum_{\textbf{u}, \textbf{v}\in\mathcal{U}_k} \int e^{2\pi i g_{\textbf{l}, \textbf{m}, \textbf{u}, \textbf{v}}(x)}d\mu(x)\notag\\
 &\le\frac{2(k!)^2}{N^{2k}}\sum_{0<\|\textbf{l}\|_{\infty}, \|\textbf{m}\|_{\infty}\leq N^{1+\epsilon}}\widehat{f}\left(\frac{\textbf{l}}{N}\right)
 \widehat{f}\left(\frac{\textbf{m}}{N}\right)\sum_{u_1\geq v_1, \textbf{u}, \textbf{v}\in\mathcal{\tilde{U}}_k} \int e^{2\pi i g_{\textbf{l}, \textbf{m}, \textbf{u}, \textbf{v}}(x)}d\mu(x),
\end{align}
where 
$$g_{\textbf{l}, \textbf{m}, \textbf{u}, \textbf{v}} (x)=:\sum_{j=1}^{k-1}l_j (x^{u_j}-x^{u_{j+1}})+\sum_{j=1}^{k-1}m_j (x^{v_j}-x^{v_{j+1}}).$$
We decompose the region of of parameters 
$$\Omega=\left\{(\textbf{l}, \textbf{m}, \textbf{u}, \textbf{v}): 0<\|\textbf{l}\|_{\infty}, \|\textbf{m}\|_{\infty}\leq N^{1+\epsilon}, u_1\geq v_1, \textbf{u}, \textbf{v}\in\mathcal{\tilde{U}}_k\right\}$$ into three parts: 
\begin{align*}
    \Omega_1= &\left\{(\textbf{l}, \textbf{m}, \textbf{u}, \textbf{v})\in\Omega: u_1\leq N^{\frac{1}{2k}}\right\},\\
    \Omega_2= &\left\{(\textbf{l}, \textbf{m}, \textbf{u}, \textbf{v})\in\Omega: u_1>N^{\frac{1}{2k}}, u_1=v_1, l_1+m_1=0\right\},\\
    \Omega_3= &\Omega\setminus \cup_{i=1}^2\Omega_i.
\end{align*}
 Then
\begin{equation}\label{region1}
 \frac{1}{N^{2k}}   \left|\sum_{(\textbf{l}, \textbf{m}, \textbf{u}, \textbf{v})\in\Omega_1}\widehat{f}\left(\frac{\textbf{l}}{N}\right)
 \widehat{f}\left(\frac{\textbf{m}}{N}\right)\int e^{2\pi i g_{\textbf{l}, \textbf{m}, \textbf{u}, \textbf{v}}(x)}d\mu(x)\right|\leq \frac{1}{N^{2k}}\cdot N^{2(k-1)(1+\epsilon)}\cdot (N^{\frac{1}{2k}})^{2k}=\frac{1}{N^{1-2k\epsilon}}.
\end{equation}
By taking $\epsilon<\frac{1}{2k}$ we  see that the term corresponding to summation over $\Omega_1$ decays polynomially. Next, by Theorem \ref{main-2},  
\begin{equation}\label{region3}
  \frac{1}{N^{2k}}  \left|\sum_{(\textbf{l}, \textbf{m}, \textbf{u}, \textbf{v})\in\Omega_3}\widehat{f}\left(\frac{\textbf{l}}{N}\right)
 \widehat{f}\left(\frac{\textbf{m}}{N}\right)\int e^{2\pi i g_{\textbf{l}, \textbf{m}, \textbf{u}, \textbf{v}}(x)}d\mu(x)\right|\leq \frac{1}{N^{2k}}\cdot N^{2(k-1)(1+\epsilon)}\cdot N^{2k}\cdot a^{-N^{\tau}}=O\left(a^{-N^{\tau/2}}\right).
\end{equation}
For the term corresponding to summation over $\Omega_2$, we can compare the values of $u_2, v_2$ and proceed similarly as in the estimates \eqref{region1}
and \eqref{region3} to conclude that 
\begin{equation}\label{region2}
 \frac{1}{N^{2k}}   \left|\sum_{(\textbf{l}, \textbf{m}, \textbf{u}, \textbf{v})\in\Omega_2}\widehat{f}\left(\frac{\textbf{l}}{N}\right)
 \widehat{f}\left(\frac{\textbf{m}}{N}\right)\int e^{2\pi i g_{\textbf{l}, \textbf{m}, \textbf{u}, \textbf{v}}(x)}d\mu(x)\right|= O\left(N^{-c}\right), \text{ for some } c\in (0,1).
\end{equation}
Plugging \eqref{region1}-\eqref{region2} into \eqref{L2bound}, we see that $\int |X_N(x)|^2 d\mu(x)$ decays at a polynomial speed. Theorem \ref{Main Theorem pos} now follows directly from Proposition \ref{Prop yesha}. \hfill{$\Box$}

\bibliography{bib}
\bibliographystyle{plain}

\end{document}